	\theoremstyle{plain}
	\newtheorem{theorem}{Theorem}[section]
	\newtheorem{lem}{Lemma}[section]
	\newtheorem{prop}{Proposition}[section]
	\newtheorem{cor}{Corollary}[section]
	\theoremstyle{definition}
	\newtheorem{defn}{Definition}[section]
	\newtheorem*{acknowledgements}{Acknowledgements}
	\theoremstyle{remark}
	\newtheorem{rmk}{Remark}
	\newtheorem{eg}{Example}
	\newcommand{\R}{\mathbb{R}}
	\newcommand{\C}{\mathbb{C}}
	\newcommand{\N}{\mathbb{N}}
	\newcommand{\Z}{\mathbb{Z}}
	\newcommand{\F}{\mathbb{F}}
	\renewcommand{\O}{\mathcal{O}}
	\newcommand{\Q}{\mathbb{Q}}
	\newcommand{\B}{\mathscr{B}}
	\newcommand{\f}{\mathfrak{f}} 
	\renewcommand{\dfrac}[2]{\tfrac{#1}{#2}}
	\newcommand{\ol}[1]{\overline{#1}}
	\newcommand{\sbst}{\subseteq}
	\newcommand{\Nm}{\text{Nm}}
	\renewcommand{\#}{\sharp}
	\renewcommand{\l}{\ell}
	\renewcommand{\ss}{supersingular}
\begin{document}

\newgeometry{margin=1in}

\title[Loops, multi-edges and collisions]{Loops, multi-edges and collisions in supersingular isogeny graphs}

\author[Wissam Ghantous]{Wissam Ghantous}
\address{Mathematical Institute,\\University of Oxford, \\ Andrew Wiles Building, OX2 6GG, UK}
\subjclass[2010]{14H52, 14K02}

\date{\today }

\keywords{supersingular isogeny graphs, loops, multi-edges, collisions, bi-route number}

\begin{abstract}
Supersingular isogeny graphs are known to have very few loops and multi-edges. 
	We formalize this idea by studying and finding bounds for the number of loops and multi-edges in such graphs. 
	We also find conditions under which the supersingular isogeny graph $\Lambda_p(\l)$ is simple.

	The methods presented in this paper can be used to study many kinds of collisions in supersingular isogeny graphs. 
	{As an application,} we introduce the notion of bi-route number for two graphs $\Lambda_p(\l_1),\Lambda_p(\l_2)$ 
	and compute bounds for it. 
	We also study the number of edges in common between the graphs $\Lambda_p(\l_1),\Lambda_p(\l_2)$. 
\end{abstract}

\maketitle

\section{Introduction}  

Supersingular isogeny graphs $\Lambda_p(\l)$ play a paramount role in post-quantum cryptography and isogeny based cryptography. They are used to develop quantum-safe analogs of older cryptographic systems. 
In \cite{CLG09}, Charles, Lauter and Goren construct a cryptographic hash function whose security is based on the difficulty of finding paths between two vertices in $\Lambda_p(\l)$, i.e. finding isogenies between supersingular elliptic curves. 
Moreover, in \cite{DFJP14} new candidates for quantum-resistant public-key cryptosystems, based on this same problem, are presented. 
The graph $\Lambda_p(\l)$ is created by taking the graph associated to a Brandt matrix $B(\l)$ for the prime $\l$, over some base prime $p$. In other words, they are obtained by turning the set of supersingular elliptic curves (over $\F_{p^{2}}$) into a graph where edges are given by isogenies of prime degree $\l$. 
\\

A key assumption that is made in many cases is that the base prime $p$ is congruent to $1$ modulo $12$. This is a very technical assumption to ensure that the graphs are everywhere regular and undirected. Actually, the only vertices where one might not have undirectedness are the ones corresponding to the $j$-invariants $0$ and $1728$. The assumption $p \equiv 1 \mod 12$ ensures that the elliptic curves with $j$-invariants $0$ and $1728$ are not supersingular. 

A further conceptual simplification that is usually made when working with supersingular isogeny graphs is that they are very close to being simple graphs. Indeed, supersingular isogeny graphs have very few multiple edges and also very few loops (by loop, we mean an edge from a vertex to itself). 
One might hence ask how many \emph{loops} or \emph{multi-edges} such a graph has on average, or what assumptions must one make in order to obtain a simple supersingular isogeny graph (i.e. with no loops nor multi-edges). 

\textbf{Our contribution.} It is already known that one can make some conditions on the prime $p$ to ensure that the graph $\Lambda_p(\l)$ has no loops. 
However there isn’t any comprehensive study assembling -- in one place -- bounds on the number of loops, bounds on the number of multi-edges and conditions to ensure the simplicity of $\Lambda_p(\l)$. 
To do so, 
we will use a very particular characterization of the trace of Brandt matrices proven in \cite{Gro87}, relating the trace of Brandt matrices to modified Hurwitz class number, as well as bounds for Hurwitz class numbers given in \cite{Gie80}.

Moreover, since the graphs $\{\Lambda_p(\l) \}_\l$ have the same set of vertices for any fixed prime $p$, 
we can consider the graph $\Lambda_p(\l_1, \l_2)$, for two primes $\l_1$ and $\l_2$, obtained by superposing the two graphs $\Lambda_p(\l_1), \Lambda_p(\l_2)$ 
and drawing the edges of $\Lambda_p(\l_1)$ and $\Lambda_p(\l_2)$ in two different colours as in Figure 1. 
One is then interested in the number of edges that these two graphs have in common (which is related to their \emph{edit distance}) as well as the number of times two vertices will have paths of certain lengths between them on both graphs (which we will define as their \emph{bi-route number}). 
We will formalize, quantitatively study and bound these notions of common edges and common paths 
between two graphs $\Lambda_p(\l_1)$ and $\Lambda_p(\l_2)$. 
We will do so by using the same method as for the study of loops and multi-edges. 
This method can actually be used to quantitatively study any kind of collisions in supersingular isogeny graphs. 
\begin{figure}[h!]
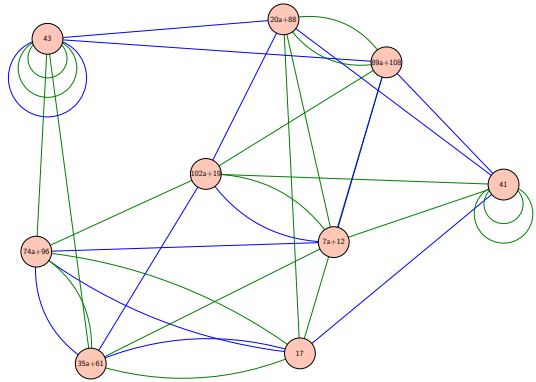

		\minipage{0.38\textwidth} 
			\centering
			\include{NEWSSIGp=109l=2,3}
			\label{HeuristicallyIndp757l35}
		\endminipage
			\caption{The graph $\Lambda_{109}(2,3)$ where $2$-isogenies are in blue and $3$-isogenies are in green.} 
		\end{figure}

\textbf{Outline.} We start with Section \ref{SectionBackground}, 
where we recall the necessary preliminaries to understand the construction of supersingular isogeny graphs and Brandt matrices, 
as well as the essential results on traces of Brandt matrices and sums of Hurwitz class numbers used in our subsequent computations. 
In Section \ref{SectionLoopsAndMultiEdges}, 
we begin the study of loops in the graph $\Lambda_p(\l)$, which is the simplest example one can consider, 
in order to demonstrate how our method of computing collisions works. 
This gives us a bound on the number of loops and well as conditions to ensure the graph $\Lambda_p(\l)$ has no loops. 
Subsequently, we turn our attention to the simplest kind of collision one can have: multi-edges. 
We then obtain bounds on the number of multi-edges and conditions to guarantee the simplicity of the graph $\Lambda_p(\l)$. 
Finally, in Section \ref{SectionSimultaneousstudyoftwographs}, we simultaneously study two supersingular isogeny graphs. 
This section provides new results regarding problems that haven’t been considered before. 
We first study the number of edges in common between the two graphs $\Lambda_p(\l_1)$ and $\Lambda_p(\l_2)$ 
and provide bounds for them, which in turn gives bounds for the edit distance between $\Lambda_p(\l_1)$ and $\Lambda_p(\l_2)$. 
We also give conditions under which these common edges do not exist. 
Second, we study a much more general notion of collision by introducing the concept of \emph{bi-route number}. 
It is a measure of how many times two vertices will have paths (which will constitute collisions) of certain lengths 
between them on both graphs $\Lambda_p(\l_1)$ and $\Lambda_p(\l_2)$. 

\begin{acknowledgements}
	I would like to thank Eyal Goren, who supervised my masters studies at McGill University.  
	I am thankful for the many fruitful discussions we had -- that lead to many of the results in this article -- 
	and the helpful advice I received during my time at McGill. 
\end{acknowledgements}

\section{Background} \label{SectionBackground}

\subsection{Elliptic curves}

	Let $p$ be a prime greater than $4$ and $k:= \F_{q}$ be the finite field of order $q$ and characteristic $p$. 
	The algebraic closure of $\F_{q}$ will be denoted by $\overline{\F}_{q}$. 
	An \emph{elliptic curve} $E$ over $k$ is a smooth projective curve of genus $1$, 
	together with a distinguished $k$-rational point $O_{E}$. 
	Equivalently, (since $\text{Char}(k) \not = 2,3$) the elliptic curve $E$ over $k$ is the projective closure of 
		the affine curve given by a \emph{short Weierstra{ss} equation} of the form 
		\begin{equation*}
			E: y^2 = x^3 + ax + b, \qquad a,b \in k 
		\end{equation*} 
	where the discriminant $\Delta := -16(4a^3 + 27 b^2)$ is non-zero. 
	Here, the distinguished point $O_E$ is the point at infinity $(0:1:0)$ on the projective closure. 
	The $j$-invariant of $E$ is $j(E) := -1728 \frac{(4a)^{3}}{\Delta}$. 
	Every $j \in k$ is the {$j$-invariant} of some elliptic curve $E_j$ over $k$. 
	Indeed, for $j$ equal to $0$ or $1728$, 
	consider  $E_0: y^2 = x^3 + b$ and $E_{1728}: y^2 = x^3 + ax$ respectively (for any choice of $a$ and $b$). 
	If $j\not =0,1728$, let $E_j: y^2 = x^3 + 3j (1728-j) x + 2j(1728 - j)^{2}$. 
	
	For every extension $K/k$, the set of $K$-rational points of $E$, 
		\[E(K) := \{(x,y) \in K \times K : y^2 = x^3 + ax + b\} \cup \{ O_E\}, \]  
	forms an abelian group where the group structure is given by rational functions. 
	An \emph{isogeny} $f$ between two elliptic curves $E_1$ and $E_2$ over $k$ is a group homomorphism 
	that is also a morphism of varieties (a regular rational function). We must have $f(O_{E_1}) = O_{E_2}$. 
	Bijective isogenies are just called \emph{isomorphisms}. 
	It is known that 
	isogenies have finite kernels
	and that non-zero isogenies are surjective. 
	
	Elliptic curves are classified up to isomorphism (over $\ol{k}$) by their $j$-invariants, 
	i.e. two elliptic curves $E_1$ and $E_2$, when viewed over $\ol{k}$, are isomorphic if and only if $j(E_1)= j(E_2)$. 
	
	Quotients of elliptic curves by finite subgroups are also elliptic curves. 
	In addition, isogenies are uniquely defined by their kernels. 
	So given a finite subgroup $L \sbst E$, $E/L$ is an elliptic curve and there exists a unique (up to isomorphism) elliptic curve $E'$  
	and a separable isogeny $\phi : E \longrightarrow E'$ such that $\ker (\phi) = L$ and $E’ \cong E/L$. 
	
	\begin{rmk}
		Although elliptic curves may in general be defined over any extension $K/k$, 
		in this work, 
		isogenies will always (unless specified explicitly) be defined over the algebraic closure $\ol{k}$. 
	\end{rmk}
	
	\begin{theorem} \label{21sep101136a}
		Let $E$ be an elliptic curve over a field $K$ (of any characteristic). 
		The automorphism group $\text{Aut}(E)$ is cyclic. 
		If $j(E)\not =0,1728$, then $\# \text{Aut}(E) = 2$. 
		If $\text{Char}(K)\not =2,3$, then $\# \text{Aut}(E_0) = 6$ and $\# \text{Aut}(E_{1728}) = 4$. 
	\end{theorem}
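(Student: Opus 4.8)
The plan is to realise every element of $\mathrm{Aut}(E)$ (every isomorphism $E \to E$ fixing $O_E$) as an explicit change of coordinates on a Weierstra{\ss} model and then read off the group from the resulting constraints. Working in the paper's setting $\mathrm{Char}(K) \neq 2,3$, I would fix a short Weierstra{\ss} model $E : y^2 = x^3 + ax + b$ and recall that the coordinate functions are characterised intrinsically by Riemann--Roch: $x$ spans the new degree-$2$ part and $y$ the new degree-$3$ part of the spaces $\mathcal{L}(n\,O_E)$. Since any $\varphi \in \mathrm{Aut}(E)$ fixes $O_E$ and hence preserves these linear systems, it must act by $(x,y) \mapsto (u^2 x + r,\ u^3 y + s u^2 x + t)$ for suitable constants $u \in \overline{K}^{\times}$ and $r,s,t$. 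Imposing that the image again be a \emph{short} Weierstra{\ss} equation (no $xy$, $y$, or $x^2$ terms) forces $s = r = t = 0$ by completing the square and depressing the cubic, which is exactly where $\mathrm{Char}(K) \neq 2,3$ is used; thus $\varphi$ has the normal form $(x,y) \mapsto (u^2 x,\ u^3 y)$.

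Second, I would substitute this normal form into the defining equation: $(x,y)=(u^2X,u^3Y)$ turns $y^2=x^3+ax+b$ into $Y^2 = X^3 + au^{-4}X + bu^{-6}$, so preserving the curve is equivalent to the two conditions $a(u^4-1)=0$ and $b(u^6-1)=0$. The assignment $\varphi \mapsto u$ is an injective group homomorphism $\mathrm{Aut}(E) \hookrightarrow \overline{K}^{\times}$ (composition multiplies the $u$'s), and its image consists of roots of unity, hence is a finite subgroup of $\overline{K}^{\times}$. Because every finite subgroup of the multiplicative group of a field is cyclic, $\mathrm{Aut}(E)$ is cyclic.

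Third, I would run the case analysis governed by which of $a,b$ vanish, using that $j = -1728(4a)^3/\Delta$ gives $j = 0 \iff a = 0$ and $j = 1728 \iff b = 0$. If $j \neq 0,1728$ then $a,b \neq 0$, so $u^4 = u^6 = 1$, whence $u^{\gcd(4,6)} = u^2 = 1$, i.e.\ $u = \pm 1$ and $\#\mathrm{Aut}(E) = 2$. For $E_0 : y^2 = x^3 + b$ we have $a = 0$, so only $u^6 = 1$ survives and $\#\mathrm{Aut}(E_0) = 6$; for $E_{1728} : y^2 = x^3 + ax$ we have $b = 0$, so only $u^4 = 1$ survives and $\#\mathrm{Aut}(E_{1728}) = 4$. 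Here $\mathrm{Char}(K) \neq 2,3$ enters a second time, guaranteeing that $\mu_6$ and $\mu_4$ really contain $6$ and $4$ distinct elements of $\overline{K}$.

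The main obstacle, and the step to treat most carefully, is the first one: justifying that an \emph{abstract} automorphism is forced into the shape $(x,y)\mapsto(u^2x,u^3y)$. This rests on the Riemann--Roch description of the Weierstra{\ss} coordinates together with the elimination of the shear/translation parameters $r,s,t$, a reduction that genuinely requires $\mathrm{Char}(K)\neq 2,3$. In characteristics $2$ and $3$ these reductions fail and the special curves ($j=0=1728$ there) acquire strictly larger automorphism groups, which is precisely why the orders $6$ and $4$ are asserted only away from those characteristics, while the generic claim $\#\mathrm{Aut}(E)=2$ for $j\neq 0,1728$ persists. Everything past the normal form is the elementary root-of-unity bookkeeping recorded above.
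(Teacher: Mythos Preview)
The paper does not prove this theorem; it is stated without proof as a standard background fact in the preliminaries section (the reference for such material is Silverman's \emph{The Arithmetic of Elliptic Curves}, \S III.10, which the paper cites elsewhere). Your argument is exactly the classical one found there and is correct in characteristic $\neq 2,3$, which is the only case the paper ever uses.

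One remark worth making: the theorem as written asserts that $\mathrm{Aut}(E)$ is cyclic over a field of \emph{any} characteristic, but this is in fact false --- in characteristic $2$ (resp.\ $3$) the curve with $j=0$ has a non-abelian automorphism group of order $24$ (resp.\ $12$). Your decision to work throughout in $\mathrm{char}(K)\neq 2,3$ is therefore not a gap but a necessary restriction: the embedding $\mathrm{Aut}(E)\hookrightarrow \overline{K}^{\times}$ via $\varphi\mapsto u$ that you construct genuinely does not exist for those special curves in low characteristic, and no argument can produce one. Since the paper fixes $p>3$ from the outset, none of this affects anything downstream.
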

	
	Since every isogeny $\phi : E_1 \longrightarrow E_2$ is a rational map of curves, 
	composition with $\phi$ induces an injection of function fields 
		\begin{align*}
			\phi^{*} : \  \ol{k}(E_2) & \longrightarrow \ol{k}(E_1)  \\ 
			  f & \mapsto f \circ \phi. 
		\end{align*} 
	We say that the isogeny $\phi$ is \emph{separable} (resp. \emph{purely inseparable}) 
	if the field extension $\ol{k}(E_1)/ \phi^{*} \ol{k}(E_2)$ is separable (resp. {purely inseparable}). 
	The \emph{degree} of a non-zero isogeny $\phi : E_1 \longrightarrow E_2$ is 
	the degree of the field extension, i.e. $\deg (\phi) := [\ol{k}(E_1) : \phi^{*} \ol{k}(E_2)]$. 
	The degree is multiplicative (with respect to composition) and in the case where $\phi$ is a {separable} isogeny, 
	its degree is equal to the size of its kernel. 
	\begin{rmk}
		If $K$ is a field of characteristic $p$, which is the case in this paper, 
		then any isogeny $\phi$ can be factored into the composition of a separable isogeny $\psi$, and a purely inseparable 
		Frobenius isogeny $\pi_{p^{r}} : E \longrightarrow E^{(p^{r})}$, $(x,y) \mapsto (x^{p^r}, y^{p^r})$ 
		to get $\phi = \psi \circ \pi_{p^{r}}$. 
		In particular, if the degree of an isogeny is not divisible by $p$, then it is a separable isogeny. 
	\end{rmk} 
	
	Let $E$ be an elliptic curve over $K$. Given an integer $m \in \N$, the multiplication-by-$m$ map 
			\begin{align*}
				[m]:  \ E & \longrightarrow E \\ 
				 P & \mapsto \underbrace{P+ ...+P}_{m \text{-times}}
			\end{align*}
	is an isogeny of degree $m^{2}$.  
	This allows us to view $\Z$ as a subgroup of $\text{End}(E)$ 
	via $m \mapsto [m]$. 
	The kernel of $[m]$ is the group of $m$-torsion points $E[m]$ of $E$. 
	When $m$ is coprime to $\text{Char}(K)$, we have $E[m] \cong \Z/m\Z \times \Z/m\Z$. 
	\begin{eg} 
		Consider the elliptic curve $E : y^{2} = x^{3} + 1$ over $\C$ and the isogeny $[2] : P \mapsto P + P$. 
		Since isogenies are rational functions, the coordinates of the map $[2]$ can be expressed as a fraction of polynomials. 
		 Indeed, 
		 	\[[2](x,y) = \left(\frac{x^4-8 x}{4 (x^3+1)},\frac{y(-8 + 20 x^3 + x^6)}{8 (1 + x^3)^2}\right). \]
		Moreover, $\ker([2])=	E[2] = \{ O_E, (-1,0), (\frac{1+i \sqrt{3}}{2},0) , (\frac{1-i \sqrt{3}}{2},0) \} \cong \Z/2\Z \times \Z/2\Z$. 
	\end{eg}
	
	An extremely important property of isogenies, other than always being surjective, 
	is that for every isogeny $\phi : E \longrightarrow E’$  of degree $d$, 
	there exists a (unique) dual isogeny $\phi^{\vee}: E’ \longrightarrow E$ of degree $d$, 
	such that $\phi^{\vee} \circ \phi = \phi \circ \phi^{\vee} = [d]$. 
	We conclude with the following useful properties about the existence and the decomposition of certain isogenies. 
	See \cite{Sil09} 
	for more on this. 
	\begin{prop} \label{MissingLink}
		Let $\phi : E_1 \longrightarrow E_2$, $\psi : E_1 \longrightarrow E_3$ be two non-zero isogenies 
		and assume that $\phi$ is separable. 
		If $\ker(\phi) \sbst \ker(\psi)$, then there exists a unique isogeny $\gamma : E_2 \longrightarrow E_3$ 
		such that $\psi = \gamma \circ \phi$. 
	\end{prop}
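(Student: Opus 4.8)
The plan is to dispatch uniqueness immediately and then construct $\gamma$ from a function-field computation in which the separability of $\phi$ plays the essential role.

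\emph{Uniqueness.} Every non-zero isogeny is surjective, so $\phi(E_1) = E_2$. If $\gamma_1, \gamma_2 : E_2 \longrightarrow E_3$ both satisfy $\gamma_i \circ \phi = \psi$, then $\gamma_1$ and $\gamma_2$ agree on $\phi(E_1) = E_2$, hence $\gamma_1 = \gamma_2$. It thus remains to produce $\gamma$.

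\emph{Existence, reduced to a field inclusion.} Pulling back functions along $\phi$ and $\psi$ gives inclusions $\phi^{*}\ol{k}(E_2) \sbst \ol{k}(E_1)$ and $\psi^{*}\ol{k}(E_3) \sbst \ol{k}(E_1)$, with $\phi^{*}$ an isomorphism onto its image. I claim it suffices to prove $\psi^{*}\ol{k}(E_3) \sbst \phi^{*}\ol{k}(E_2)$. Granting this, the composite $(\phi^{*})^{-1}\circ\psi^{*} : \ol{k}(E_3) \longrightarrow \ol{k}(E_2)$ is a homomorphism of function fields fixing $\ol{k}$, hence is induced by a dominant rational map $\gamma : E_2 \longrightarrow E_3$ with $\gamma^{*} = (\phi^{*})^{-1}\circ\psi^{*}$, i.e. $\phi^{*}\circ\gamma^{*} = \psi^{*}$, i.e. $(\gamma\circ\phi)^{*} = \psi^{*}$, which on the level of maps is $\psi = \gamma\circ\phi$. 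Since $E_2$ is a smooth projective curve this rational map is in fact a morphism, and because $\gamma(O_{E_2}) = \gamma(\phi(O_{E_1})) = \psi(O_{E_1}) = O_{E_3}$ it is a morphism of elliptic curves fixing the identity, hence an isogeny; it is non-zero since $\psi \not= 0$.

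\emph{The key inclusion.} This is where separability enters. As $\phi$ is separable, the extension $\ol{k}(E_1)/\phi^{*}\ol{k}(E_2)$ is Galois, with Galois group canonically identified with $\ker(\phi)$: a point $T \in \ker(\phi)$ acts as $\tau_T^{*}$, pullback along the translation $\tau_T(P) = P + T$ (well defined on the quotient since $\phi\circ\tau_T = \phi$), and the fixed field of this action is exactly $\phi^{*}\ol{k}(E_2)$. It therefore suffices to check that every element of $\psi^{*}\ol{k}(E_3)$ is fixed by $\tau_T^{*}$ for all $T \in \ker(\phi)$. For $f \in \ol{k}(E_3)$ and $T \in \ker(\phi)$ we have $\tau_T^{*}(\psi^{*}f) = f\circ\psi\circ\tau_T$, and for every $P$, $\psi(\tau_T(P)) = \psi(P+T) = \psi(P) + \psi(T) = \psi(P)$, where the last equality uses $T \in \ker(\phi) \sbst \ker(\psi)$. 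Hence $\psi\circ\tau_T = \psi$ and $\tau_T^{*}(\psi^{*}f) = \psi^{*}f$, giving $\psi^{*}\ol{k}(E_3) \sbst \phi^{*}\ol{k}(E_2)$.

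\emph{Main obstacle.} The formal parts---uniqueness, the passage from the field inclusion to the morphism $\gamma$, and the translation computation that pins down precisely where the hypothesis $\ker(\phi)\sbst\ker(\psi)$ is used---are routine. The substantive input is the Galois description of the separable isogeny $\phi$: that $\ol{k}(E_1)/\phi^{*}\ol{k}(E_2)$ is Galois with group $\ker(\phi)$ acting by translations and fixed field $\phi^{*}\ol{k}(E_2)$, equivalently that $\phi$ is, under the identification $E_2 \cong E_1/\ker(\phi)$, the quotient by its kernel. This structural fact is the crux; everything else is assembled around it.
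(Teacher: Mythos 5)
Your proof is correct, and it is essentially the canonical argument: the paper offers no proof of its own for this proposition, instead citing Corollary~III.4.11 of \cite{Sil09}, whose proof is exactly your route (uniqueness from surjectivity of non-zero isogenies, existence via the Galois correspondence for the separable extension $\ol{k}(E_1)/\phi^{*}\ol{k}(E_2)$ with group $\ker(\phi)$ acting by translations). So your write-up matches the paper's intended proof in both structure and key input.
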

	\begin{prop}\label{FactoringIsogenies}
		Let $\phi : E \longrightarrow E’$ be a separable isogeny defined over $\F_{q}$. 
		Then, there exists $n \in \Z$, elliptic curves $E=E_0,...,E_s=E’$ and 
		isogenies $\psi_i: E_i \longrightarrow E_{i+1}$ of prime degree (for $i=0,...,s-1$) 
		such that $\phi= \psi_{s-1} \circ ... \circ \psi_0 \circ [n]$. 
		Moreover, $\deg(\phi) = n^{2} \prod_{i=0}^{s-1} \deg(\psi_i)$. 
	\end{prop}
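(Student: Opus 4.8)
The plan is to peel off a multiplication-by-$n$ map in order to reduce to the case of a cyclic kernel, and then to cut that cyclic kernel along a composition series into prime-degree quotients, invoking Proposition \ref{MissingLink} at every stage to manufacture the intermediate isogenies and using multiplicativity of the degree to read off the prime degrees. Throughout I work with $L := \ker(\phi) \sbst E(\ol{k})$, a finite subgroup with $\# L = \deg(\phi)$ since $\phi$ is separable.

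First I would isolate the scalar part. Let $n$ be the largest integer coprime to $p$ with $E[n] \sbst L$; taking $\gcd(n,p)=1$ guarantees that $[n] : E \to E$ is separable of degree $n^{2}$ with $\ker([n]) = E[n] \sbst L = \ker(\phi)$, so Proposition \ref{MissingLink} produces a unique isogeny $\phi' : E \to E'$ with $\phi = \phi' \circ [n]$ and $\ker(\phi') = L / E[n]$. The crucial claim is that $\ker(\phi')$ is cyclic. For the prime-to-$p$ part this follows from the classification of finite subgroups of order prime to $p$ as $\Z/d_1 \times \Z/d_2$ with $d_1 \mid d_2$ (such a group contains $E[d_1]$), together with the maximality of $n$, which forces $d_1 = 1$; the $p$-primary part is cyclic automatically, because in characteristic $p$ the group $E(\ol{k})[p^\infty]$ is either trivial (supersingular) or isomorphic to $\Z/p^\infty$ (ordinary). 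A finite abelian group that is cyclic on its prime-to-$p$ part and on its $p$-part is cyclic, giving the claim.

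Next I would split the cyclic isogeny. Writing $\# \ker(\phi') = \ell_0 \cdots \ell_{s-1}$ as a product of (not necessarily distinct) primes, cyclicity yields a filtration $0 = C_{0} \sbst C_{1} \sbst \cdots \sbst C_{s} = \ker(\phi')$ with $\#(C_{i+1}/C_{i}) = \ell_i$. Setting $E_i := E/C_i$ (so $E_0 = E$ and, after the identification $E' \cong E/\ker(\phi')$ coming from uniqueness of isogenies by their kernels, $E_s = E'$) and writing $q_i : E \to E_i$ for the quotient isogeny, the inclusion $\ker(q_i) = C_i \sbst C_{i+1} = \ker(q_{i+1})$ with $q_i$ separable lets me apply Proposition \ref{MissingLink} again to obtain a unique $\psi_i : E_i \to E_{i+1}$ with $q_{i+1} = \psi_i \circ q_i$; multiplicativity of the degree then gives $\deg(\psi_i) = \# C_{i+1} / \# C_i = \ell_i$, a prime. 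Telescoping the relations $q_{i+1} = \psi_i \circ q_i$ from $q_0 = \mathrm{id}_E$ up to $q_s = \phi'$ yields $\phi' = \psi_{s-1} \circ \cdots \circ \psi_0$, hence $\phi = \psi_{s-1} \circ \cdots \circ \psi_0 \circ [n]$ with the required chain $E = E_0, \dots, E_s = E'$. The degree identity is then immediate from multiplicativity of the degree under composition: $\deg(\phi) = \deg([n]) \prod_{i} \deg(\psi_i) = n^2 \prod_{i=0}^{s-1} \ell_i$.

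The main obstacle is the cyclicity of $\ker(\phi')$ after removing the maximal $[n]$: this is the one genuinely structural step, and it is also where the characteristic-$p$ bookkeeping -- ensuring $p \nmid n$ so that $[n]$ is separable, and controlling the $p$-primary part of the kernel -- has to be handled with care. Once cyclicity is established, the composition series and the repeated application of Proposition \ref{MissingLink} are routine, and the degree formula drops out of multiplicativity.
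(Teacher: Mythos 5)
Your proof is correct. One thing to note up front: the paper does not actually prove this proposition; it is quoted as background with a pointer to \cite{Sil09}, so there is no internal proof to compare against. Your argument is the standard one (and essentially the one in Silverman): peel off the maximal scalar $[n]$ via Proposition \ref{MissingLink}, check the residual kernel is cyclic, then cut a composition series of that kernel into prime-degree steps, again via Proposition \ref{MissingLink}. All the delicate points are handled properly: the maximality of $n$ (with $\gcd(n,p)=1$) does force the prime-to-$p$ part of $\ker(\phi')$ to be cyclic, the $p$-primary part is cyclic because $E(\ol{k})[p^\infty]$ is trivial or $\Z/p^\infty$, and the telescoping of the quotient maps $q_{i+1}=\psi_i\circ q_i$ together with multiplicativity of degrees gives both the factorization and $\deg(\phi)=n^2\prod_i \deg(\psi_i)$. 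Two small remarks. First, the statement as literally written is weaker than what you prove: taking $n=1$ and any composition series $0=C_0\subset C_1\subset\cdots\subset C_s=\ker(\phi)$ (which exists for any finite abelian group, cyclic or not) already yields a factorization into prime-degree isogenies, since prime-degree isogenies automatically have cyclic kernel. Second, the stronger form you establish -- that after extracting the maximal $[n]$ the remaining kernel is cyclic, so the prime-degree chain has no ``backtracking'' -- is exactly the version the paper implicitly relies on in Section \ref{SectionSupersingularisogenygraphs} when it equates cyclic-kernel isogenies of degree $\l^a$ with non-backtracking paths, so your extra work is the right thing to do, not wasted effort. Minor polish: $\ker(\phi')$ is literally $[n](L)\cong L/E[n]$, and you should say in one line that $\phi'$ is separable (inseparable degree is multiplicative and $\phi$ is separable), which you use implicitly.
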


\subsection{Supersingular elliptic curves}

	\begin{defn} \label{21sep100959a}
		An elliptic curve $E$ is said to be \emph{supersingular} if it satisfies one of the following equivalent conditions:  
			\begin{enumerate}[(i)]
			\item For every finite extension $\F_{q^r}$ there are no points in $E(\F_{q^r})$ of order $p$, 
				i.e. $E(\F_{q^{r}})[p] =0$.  
			\item The isogeny $[p]: E \longrightarrow E$ is purely inseparable and $j(E) \in \F_{p^{2}}$. 
			\item \label{EndDefofSSIC}The endomorphism ring $\text{End}(E)$ is an order in a quaternion algebra. 
			\end{enumerate} 
	\end{defn}
	The quaternion algebra we refer to in part (\ref{EndDefofSSIC}) of Definition \ref{21sep100959a} is the quaternion algebra $\B_{p,\infty}$ 
	that we will define in section (\ref{BrandtMatrices}). 
	
	Any supersingular elliptic curve $E$, defined over $\ol{\F}_{p}$, has $j$-invariant $j(E) \in \F_{p^{2}}$ 
	and admits a presentation over $\F_{p^{2}}$. 
	This means that $E$ is isomorphic to another \ss \ elliptic curve defined over $\F_{p^{2}}$. 
	This implies the (generous) bound of  $p^{2}$ for the number of \ss \ elliptic curves over $\ol{\F}_{p}$ up to isomorphism.
	The actual number is much smaller than $p^{2}$ however. 
	For a prime $p \ge 5$, 
	the number of isomorphism classes (over $\ol{\F}_p$) of \ss \ elliptic curves is given by 
			\begin{equation} \label{NumberOfSSECs}
				\Big \lfloor \frac{p}{12} \Big \rfloor + 
									\left\{	\begin{array}{ll}
										0  & \mbox{if } p \equiv 1 \mod 12;  \\
										1  & \mbox{if } p \equiv 5 \mod 12 ; \\
										1  & \mbox{if } p \equiv 7 \mod 12  ;\\
										2  & \mbox{if } p \equiv 11 \mod 12. 
									\end{array}
									\right.
			\end{equation}
	Moreover, quotients of supersingular elliptic curves by finite subgroups are also supersingular.  
		
\subsection{Brandt matrices} \label{BrandtMatrices}
		A \emph{quaternion algebra} ${A}$ is a $4$-dimensional $\Q$-algebra of the form 
			\[{A} = \Q \oplus \Q \alpha \oplus \Q \beta \oplus \Q \alpha \beta \]
		such that $\alpha^{2} = a,\beta ^{2} = b$ with $a,b \in \Q^{\times}$, 
		and $\alpha \beta = - \beta \alpha$. 
		An \emph{order} $\O$ in a $\Q$-algebra $A$ is a subring of $A$ that is also a lattice (a finitely generated $\Z$-module) 
		spanning $A$ over $\Q$ (i.e. $\mathcal{O} \otimes_{\Z} \Q= A$). 
	Given a quaternion algebra $B$ and a prime $\nu$ (that could potentially be infinity), 
	we define $B_\nu := B \otimes_{\Q} \Q_\nu$ (with $\Q_\infty = \R$) . 
	It follows from Wedderburn's theorem that $B_\nu$ must either be a division ring or a matrix algebra $M_2(\Q_\nu)$. 
	In the case where $B_\nu$ is a division ring we say that $B$ is \emph{ramified} at $\nu$. 
	If $B_\nu \cong M_2(\Q_\nu)$ we say that $B$ is \emph{unramified} (or \emph{split}) at $\nu$. 
	
	A quaternion algebra is determined, up to isomorphism, by the set of primes at which it ramifies: 
	this set has even cardinality (and can potentially include $\infty$). 
	Conversely, any such set (of even cardinality, potentially including $\infty$) may arise as the set of primes 
	at which a quaternion algebra ramifies. 
	
	We will follow \cite{Gro87} and \cite{Voi21} to introduce Brandt matrices. 
	Let $p$ be a rational prime and $\B= \B_{p,\infty}$ the (unique) quaternion algebra over $\Q$ which is ramified at $p$ and infinity. 
	Let $O$ be a fixed maximal order of $\B$. 
	We will say that two left ideals $I$ and $J$ of $O$ are equivalent if there is some $b \in \B^{\times}$ such that $J=Ib$. 
	Let $\{ I_1, ..., I_n\}$ be a set of representatives for the left ideals of $O$ (with $I_1=O$). 
	Here, $n$ is independent of the choice of maximal order $O$ in $\B$. 
	Further, let $R_i := O_{\text{right}}(I_i) = \{ b \in \B : I b \sbst I \}$ be the right order of $I_i$. 
	Then each conjugacy class of a maximal order of $\B$ is represented (not necessarily once) in the set $\{R_1, ..., R_n\}$. 
	Let $t \le n$ be the number of conjugacy classes of maximal orders in $\B$. 
	We call $n$ the \emph{class number} of $\B$ and $t$ the \emph{type number} of $\B$. 
	\\
	
	The above shows that endomorphism rings of supersingular elliptic curves correspond 
	to maximal orders in the quaternion algebra $\B_{p,\infty}$. 
	The following theorem makes it clear. 
	\begin{theorem}[Deuring’s correspondence, \cite{Deu41}] 
		Fix any maximal order $O \in \B$. 
		Let $\mathscr{E}$ be the set of supersingular elliptic curves up to isomorphism 
		and let $\mathscr{L}_{O}$ be the set of equivalence classes of left ideals of $O$. 
		Then there is a correspondence between $\mathscr{E}$ and $\mathscr{L}_{O}$, such that for each $E \in \mathscr{E}$, 
		there exists a unique $I \in \mathscr{L}_{O}$ with $\text{End}(E) \cong O_{\text{right}}(I)$ 
		and $\text{Aut}(E) \cong O_{\text{right}}(I)^{\times}$. 
	\end{theorem}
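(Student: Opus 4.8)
The plan is to realise the correspondence concretely through modules of isogenies, and then to verify that the resulting assignment is a bijection that transports the ring structure of endomorphisms onto right orders. Without loss of generality I would fix a \ss \ \ec $E_0$ over $\ol{\F}_p$ with $\text{End}(E_0) \cong O$: every maximal order of $\B = \B_{p,\infty}$ arises (up to conjugacy) as the endomorphism ring of some such $E_0$, and both sides of the statement are insensitive to replacing $O$ by a conjugate. To each $E \in \mathscr{E}$ I then attach the abelian group
\[
	I_E := \text{Hom}(E,E_0)
\]
of isogenies $E \longrightarrow E_0$ (together with $0$). Post-composition makes $I_E$ a left module over $\text{End}(E_0) = O$, while pre-composition makes it a right module over $\text{End}(E)$; fixing one auxiliary isogeny $E \longrightarrow E_0$ and using its dual embeds $I_E$ into $\B$. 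This assignment $E \mapsto I_E$ is the candidate correspondence, and one checks directly that $E_0$ itself is sent to $I_{E_0} = O$.

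The key structural step is to show that $I_E$ is an invertible left $O$-ideal whose right order is exactly $\text{End}(E)$. Since the reduced norm on $\B$ restricts to the (multiplicative) degree map on isogenies, $I_E$ is a rank-$4$ $\Z$-lattice spanning $\B$ over $\Q$, hence a left $O$-ideal; pre-composition by $\text{End}(E)$ already gives the inclusion $\text{End}(E) \sbst O_{\text{right}}(I_E) = \{ b \in \B : I_E\, b \sbst I_E \}$. I expect the reverse inclusion to be the main obstacle: establishing $\text{End}(E) \cong O_{\text{right}}(I_E)$ requires a local argument at every prime, using the maximality of $O$ together with the local principality of the invertible ideal $I_E$, and comparing the induced actions on $\l$-power torsion for $\l \neq p$ with the local structure at $p$. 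Once this ring isomorphism is in hand, the automorphism statement is immediate: automorphisms of $E$ are precisely its invertible endomorphisms, so $\text{Aut}(E) = \text{End}(E)^{\times} \cong O_{\text{right}}(I_E)^{\times}$.

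It then remains to prove that $E \mapsto [I_E]$ descends to a bijection between isomorphism classes in $\mathscr{E}$ and equivalence classes in $\mathscr{L}_O$. For well-definedness and injectivity I would argue that an isomorphism $E \cong E'$ over $\ol{\F}_p$ induces a left-$O$-module isomorphism between $I_{E'}$ and $I_E$, so that $I_{E'} = I_E\, b$ for some $b \in \B^{\times}$, and conversely that every such $O$-module isomorphism of ideals is realised by an isomorphism of curves. Surjectivity can be obtained either constructively, through the kernel-ideal construction that sends a left $O$-ideal $I$ to the quotient of $E_0$ by the finite subgroup it cuts out (see \cite{Voi21}), or more cheaply by a counting argument: the number of isomorphism classes of supersingular curves equals the class number $n$ of $\B_{p,\infty}$, so an injection from $\mathscr{E}$ into the $n$-element set $\mathscr{L}_O$ is forced to be surjective. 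Combining the bijection with the identification $\text{End}(E) \cong O_{\text{right}}(I_E)$ from the previous step delivers the full statement, which is the content of Deuring's theorem \cite{Deu41}.
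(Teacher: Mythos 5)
The paper offers no proof of this theorem: it is stated as background and attributed to Deuring \cite{Deu41} (with \cite{Voi21} as the modern reference), so there is no internal argument to compare against. Your proposal follows the standard modern route: send $E$ to the left $O$-module $I_E = \text{Hom}(E,E_0)$, embed it in $\B$ via a dual isogeny, show it is a left $O$-ideal whose right order is $\text{End}(E)$ (whence $\text{Aut}(E)=\text{End}(E)^{\times}\cong O_{\text{right}}(I_E)^{\times}$), and obtain the bijection on classes by injectivity plus the kernel-ideal construction. This skeleton is the correct one, and the steps you flag as delicate (local principality of $I_E$, the reverse inclusion $O_{\text{right}}(I_E)\subseteq \text{End}(E)$) are indeed where the work lies.

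There is, however, a genuine circularity at your very first step. You fix $E_0$ with $\text{End}(E_0)\cong O$ by invoking the fact that every maximal order of $\B_{p,\infty}$ arises as the endomorphism ring of some supersingular curve. But that fact is a special case of the theorem being proved: applying the statement to the trivial class $I=[O]$ yields a curve $E$ with $\text{End}(E)\cong O_{\text{right}}(O)=O$, and the standard way to prove it is precisely as a corollary of the correspondence. The repair is structural rather than cosmetic: fix an arbitrary supersingular $E_0$ (existence is elementary), set $O_0:=\text{End}(E_0)$, prove the correspondence for $O_0$, and then pass to the given $O$ either by rerunning the argument with a curve whose endomorphism ring is $O$ (produced by surjectivity for $O_0$ applied to a connecting ideal), or directly via the class bijection $[I]\mapsto[JI]$, where $J$ is an ideal with left order $O$ and right order $O_0$; compatible products preserve right orders, so the statement transfers. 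Two further points need attention. Your ``cheaper'' surjectivity argument assumes $\#\mathscr{E}$ equals the class number $n$, but within this development that equality is itself a consequence of the correspondence (it is exactly how the paper computes $\#\mathscr{E}=\text{Tr}(B(1))$); unless you import independent proofs of both the Eichler class number formula and the classical count of supersingular $j$-invariants, this alternative is circular, so the kernel-ideal route must carry the proof. Finally, the converse step in your injectivity argument --- that every left $O$-module isomorphism $I_{E'}\cong I_E$ is induced by an isomorphism of curves --- is the full faithfulness of $\text{Hom}(-,E_0)$ and requires a Tate-module argument at $\l\neq p$ together with a separate argument at $p$; as written it is an assertion, not a proof.
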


	Let $\Gamma_i := R_i^{\times}/\Z^{\times}$. 
	It is a discrete subgroup of the compact group $(\B \otimes \R)^{\times}/\R^{\times} \cong \text{SO}_3(\R)$ 
	and hence must be finite. 
	Let $w_i:= \# \Gamma_i$, 
	then $w:= \prod_{i=1}^{n} w_i$ is independent of the choice of $O$ and is equal 
	to the denominator of $\frac{p-1}{12}$, when simplified.  
	Eichler's mass formula states that 
		\[\sum_{i=1}^{n} \frac{1}{w_i} = \frac{p-1}{12}. \] 
	
	We will now introduce the notions of theta series and Brandt matrices. 
	Let $I_{i}^{-1}:= \{ a \in \B : I_{i} a I_{i} \sbst I_{i} \}$ and $M_{ij}:= I_j^{-1}I_i = \{ \sum a_k b_k : a_k \in  I_j^{-1}, b_k \in I_i \}$. 
	Given $a \in \B$, let $\Nm(a)$ denote its reduced norm: 
		$\Nm(t+x i+y j+z k)=t^2 - \alpha x^2 - \beta y^2 + \alpha \beta z^2.$ 
	Let $\Nm(M_{ij})$ denote the unique rational number such that $\left \{ \frac{\Nm(a)}{\Nm(M_{ij})} : a \in M_{ij} \right \}$ 
	are integers with no common factors. 
	Define the theta series 
		\[\theta_{ij}(\tau) := \frac{1}{2 w_j} \sum_{a \in M_{ij}} e^{2 \pi i \frac{\Nm(a)}{\Nm(M_{ij})} \tau}
						= \sum_{m \ge 0} B_{ij}(m)q^{m}, \]	
	where $q:= e^{2 \pi i \tau}$. 
	This allows us in turn to define $B(m):= \begin{bmatrix} B_{ij}(m) \end{bmatrix}_{1 \le i,j \le n}$, 
	the \emph{Brandt matrix of degree $m$}. If $m=0$, we have 
		\[B(0) = \frac{1}{2} 
		\begin{bmatrix}
			\frac{1}{w_1} & \frac{1}{w_2} &... &\frac{1}{w_n}\\ 
			\frac{1}{w_1} & \frac{1}{w_2} &... &\frac{1}{w_n}\\ 
			\vdots & \vdots & \ddots&\vdots \\ 
			\frac{1}{w_1} & \frac{1}{w_2} &... &\frac{1}{w_n}
		\end{bmatrix}, \] 
	and $B(1)$ is the identity matrix. 
	For a reason that will become clear in Section \ref{Thebiroutenumber}, 
	we will define $B(m)$ for all $m \in \Q$, setting $B(m)=0$ if $m \not \in \N$ and as above if $m \in \N$.  
	\begin{prop}[Proposition 2.7 in \cite{Gro87}]
		\label{GrossProp2.7}
		\begin{enumerate}
		\item
		For $m \ge 1$, the matrix $B(m)$ has non-negative, integral entries. 
		Further, the row sums of $B(m)$ are independent of the chosen row and 
			\[\sum_j B_{ij}(m) = \sum_{\substack{d | m \\ (d, p)=1}} d. \]
		\item
		If $(m,m')=1$, then $B(mm')= B(m)B(m')$. 
		\item
		If $\l \not = p$ is a prime, then $B(\l^{k}) = B(\l^{k-1}) B(\l) - \l B(\l^{k-2})$ for all $k \ge 2$. 
		\item
		We have the symmetry relation $w_j B_{ij}(m) = w_i B_{ji}(m)$. 
		\end{enumerate} 
	\end{prop}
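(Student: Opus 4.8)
The plan is to deduce all four statements from the single identity that the theta-series definition yields immediately,
\[ 2 w_j B_{ij}(m) = \# \{ a \in M_{ij} : \Nm(a) = m \, \Nm(M_{ij}) \}, \]
and then to convert this count of lattice elements into a count of integral ideals. First I would observe that $M_{ij} = I_j^{-1} I_i$ has left order $R_j$, so that $R_j^\times$ acts on $M_{ij}$ by left multiplication; since $\B_{p,\infty}$ is definite, every unit has reduced norm $1$, so this action preserves $\Nm$ and permutes the finite set on the right-hand side. For $m \ge 1$ every element of that set is nonzero, and because $\B$ is a division algebra the action is free, with each orbit of size $\# R_j^\times = 2 w_j$. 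Dividing by the orbits shows at once that $B_{ij}(m)$ is a non-negative integer, and the standard Eichler dictionary identifies it with the number of integral right $R_i$-ideals of reduced norm $m$ lying in the ideal class indexed by $j$. This reinterpretation is the backbone of the whole argument.

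For the row sum in (1), summing $B_{ij}(m)$ over $j$ collects every integral right $R_i$-ideal of reduced norm $m$, each counted exactly once. This total is a purely local quantity: by the local--global correspondence for lattices it factors as a product over the rational primes $q$ of the number of local ideals of norm $q^{v_q(m)}$. At a prime $q \ne p$ one has $R_i \otimes \Z_q \cong M_2(\Z_q)$, and the integral right ideals of reduced norm $q^e$ biject with the index-$q^e$ sublattices of $\Z_q^2$, of which there are $1 + q + \dots + q^e$; at the ramified prime $p$ the local order is the maximal order of the division algebra $\B_p$, whose only integral right ideals are powers of its unique maximal ideal, contributing a factor $1$ and thereby excluding from the count every divisor of $m$ divisible by $p$. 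Multiplying these local contributions gives exactly $\sum_{d \mid m,\ (d,p)=1} d$, independent of $i$, which is the assertion that the constant vector is an eigenvector of $B(m)$.

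Parts (2) and (3) express the multiplicativity and the three-term recursion of the Hecke operators $B(m)$, and both follow by reading matrix products as counts of chains of ideals and analysing them locally. For (2) with $(m,m')=1$, the $(i,k)$ entry of $B(m)B(m')$ counts composable pairs of integral ideals of norms $m$ and $m'$ from $i$ through some $j$ to $k$; since the norms are coprime their local factorisations at the various primes do not interact, so each integral ideal of norm $mm'$ from $i$ to $k$ arises from a unique such pair, giving $B(mm')=B(m)B(m')$. For (3), $B(\l)$ is the adjacency operator of the $(\l+1)$-regular Bruhat--Tits tree at $\l$, and $B(\l^k)$ sums over sublattices of index $\l^k$; the product $B(\l^{k-1})B(\l)$ counts chains of successive indices $\l^{k-1}$ and $\l$. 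Such a chain either yields a genuine index-$\l^k$ sublattice, recovering $B(\l^k)$, or is imprimitive, factoring through multiplication by $\l$, in which case it is determined by an index-$\l^{k-2}$ sublattice together with one of $\l$ local choices for the intermediate step. Collecting the two contributions gives $B(\l^{k-1})B(\l) = B(\l^k) + \l \, B(\l^{k-2})$, the claimed recursion.

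Finally, (4) is the cleanest. The main involution $a \mapsto \ol{a}$ of $\B$ preserves reduced norm and reverses products, and combined with the standard relations between an ideal, its reduced norm and its conjugate it yields $\ol{M_{ij}} = \Nm(M_{ij}) \, M_{ji}$. Hence $a \mapsto \ol{a} / \Nm(M_{ij})$ is a norm-preserving bijection from $\{ a \in M_{ij} : \Nm(a) = m\,\Nm(M_{ij}) \}$ onto $\{ b \in M_{ji} : \Nm(b) = m\,\Nm(M_{ji}) \}$, so the two sets have equal cardinality; by the displayed identity this reads $2 w_j B_{ij}(m) = 2 w_i B_{ji}(m)$, which is the symmetry relation. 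I expect the main obstacle to lie not in any single relation but in the foundational dictionary of the first paragraph, namely pinning down that orbits of lattice elements under the unit group correspond bijectively to integral ideals in a fixed class, and then carrying out the local ideal counts of the second and third paragraphs at $q \ne p$ and at the ramified prime $p$ with the correct normalisations.
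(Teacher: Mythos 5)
Your proof is correct, and in fact the paper offers nothing to compare it against: Proposition \ref{GrossProp2.7} is quoted verbatim from Gross \cite{Gro87} with no proof given here, and your argument is precisely the standard ideal-theoretic one from that literature — the identity $2w_jB_{ij}(m)=\#\{a\in M_{ij}:\Nm(a)=m\Nm(M_{ij})\}$, the free action of $R_j^{\times}$ (units of norm $1$, orbits of size $2w_j$) giving the dictionary with integral ideals in the class of $I_j$, local counting ($\sigma_1(q^e)$ at split primes, a single ideal $P^e$ at the ramified prime $p$) for the row sums and the Hecke relations, and the conjugation identity $\ol{M_{ij}}=\Nm(M_{ij})M_{ji}$ for the symmetry. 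The only loosely worded step is the multiplicity bookkeeping in part (3): a chain with imprimitive composite is not literally "determined by an index-$\l^{k-2}$ sublattice plus one of $\l$ choices" — each sublattice $L''\subseteq \l L$ admits $\l+1$ intermediates, of which one is absorbed into the $B(\l^{k})$ count and $\l$ remain — but this is exactly the standard tally and yields the claimed recursion.
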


	The concepts of Brandt matrices and elliptic curves are intimately related. 
	We can thus use the theory of Brandt matrices to study elliptic curves. 
	This approach is very fruitful and we will use it in many computations 
	in Sections \ref{SectionLoopsAndMultiEdges} and \ref{SectionSimultaneousstudyoftwographs}. 
	To make this more precise, we introduce \emph{Hurwitz Class Numbers}. 
	\\ 
	
	Given an order $\O$ (of rank $2$ over $\Z$) of negative discriminant $d$, 
	let $h(d)$ be the size of the class group of $\O$ and $u(d)=\# (\O^{\times}/\Z^{\times}) = \# \O^{\times}/2$.  
	If $d=d_K$ is a discriminant
	of a field $K$, 
	by the correspondence between ideal class groups and form class groups, 
	we can define $h(d)$ as 
	the class number of primitive binary quadratic forms of discriminant $d$ 
	(equivalently, it is the number of reduced primitive binary quadratic forms of discriminant $d$). 
	We notice that $u(d)$ is always $1$; except if $d=-3,-4$, in which case, $u(d) =3,2$ respectively. 
	
	\begin{defn}
	For $D>0$, the \emph{Hurwitz Class Number} $H(D)$ is 
		\begin{equation*}
			H(D) = \sum_{\substack{d \cdot \f^{2} =-D \\ d \text{ neg. disc.}} } \frac{h(d)}{u(d)}. 
		\end{equation*} 
	where the sum runs over negative discriminants. For $D=0$, we set $H(0):= -1/12$. 
	\end{defn} 
	The sum defining $H(D)$ precisely takes into account $d$’s such that $d \equiv 0,1$ mod $4$. 
	It then follows that $H(D)>0$ when $D \equiv 0,3$ mod $4$. In particular, $H(4 k - s^{2})>0$ for all $s^{2} < 4k$. 
	\begin{theorem} [\cite{Gie80, Hur85, Kro60}]
		\label{TheoremOnMoregeneralSumofnonmodifiedHCNs}
		Let $m \in \Z$. Then, 
	\begin{equation*} 
		\sum_{\substack{s \in \Z \\ s^2 \le 4 m}} H(4m - s^{2} ) = 2 \sum_{d | m} d - \sum_{d | m} \min\{d,m/d\} . 
	\end{equation*} 
	\end{theorem}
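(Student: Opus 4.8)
The plan is to read both sides off as Fourier coefficients of weight-two modular objects, following Zagier's treatment of the generating series of Hurwitz class numbers. Set $q = e^{2\pi i \tau}$ and form
\[
\mathcal{H}(\tau) = \sum_{D \ge 0} H(D)\, q^D, \qquad \theta(\tau) = \sum_{s \in \Z} q^{s^2},
\]
so that $\theta$ is the classical Jacobi theta series of weight $1/2$ on $\Gamma_0(4)$. Since $H$ vanishes on negative arguments, the coefficient of $q^{4m}$ in the product $\mathcal{H}(\tau)\,\theta(\tau)$ is
\[
\sum_{s \in \Z} H(4m - s^2) = \sum_{\substack{s \in \Z \\ s^2 \le 4m}} H(4m - s^2),
\]
which is exactly the left-hand side. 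Thus the whole task reduces to identifying $[q^{4m}]\bigl(\mathcal{H}\,\theta\bigr)$ with an explicit divisor sum. (We only extract the $q^{4m}$-coefficients; for $N = 4m$ both parities of $s$ give $4m - s^2 \equiv 0, 3 \pmod 4$, so every term of the sum survives, while the other residue classes of $N$ produce companion relations we do not need.)

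The key steps, in order, are as follows. First, replace $\mathcal{H}$ by its nonholomorphic completion $\widehat{\mathcal{H}}$, Zagier's weight-$3/2$ harmonic Eisenstein series on $\Gamma_0(4)$, obtained by adding to $\mathcal{H}$ an explicit Eisenstein-type term built from the incomplete gamma function; $\widehat{\mathcal{H}}$ transforms like a (nonholomorphic) modular form of weight $3/2$. Second, because $\theta$ is a genuine modular form of weight $1/2$, the product $\widehat{\mathcal{H}}\,\theta$ is a nonholomorphic modular form of weight $2$ on $\Gamma_0(4)$. Third, apply the holomorphic projection operator $\pi_{\mathrm{hol}}$ (defined, after the standard regularization, precisely because the weight is $2$); its image lies in the two-dimensional, purely Eisenstein space $M_2(\Gamma_0(4))$. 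On one hand, matching $\pi_{\mathrm{hol}}(\widehat{\mathcal{H}}\,\theta)$ against the basis $\{E_2(\tau)-2E_2(2\tau),\ E_2(2\tau)-2E_2(4\tau)\}$ of this space expresses its $q^{4m}$-coefficient as the $\sigma_1$-divisor sum $2\sum_{d \mid m} d$. On the other hand, by the definition of holomorphic projection this same coefficient equals the naive coefficient $\sum_{s} H(4m - s^2)$ of $\mathcal{H}\,\theta$ plus an explicit correction coming from the nonholomorphic completion term of $\widehat{\mathcal{H}}$. Equating the two expressions and solving for the class-number sum yields $\sum_{s} H(4m - s^2) = 2\sum_{d \mid m} d - (\text{correction})$, and a direct evaluation shows the correction equals $\sum_{d \mid m} \min\{d, m/d\}$, which is the asserted identity.

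The main obstacle is the third step. Because $\mathcal{H}$ is only a mock modular form, the product $\mathcal{H}\,\theta$ is not modular and one cannot naively extract a modular relation from its coefficients; the completion $\widehat{\mathcal{H}}$ and the holomorphic projection are essential, and the delicate part is computing the projection's correction term and verifying that it equals $\sum_{d\mid m}\min\{d, m/d\}$, including the boundary case where $m$ is a perfect square, so that $s = \pm 2\sqrt{m}$ each contribute $H(0) = -1/12$. A fully elementary alternative sidesteps modular forms: one proves the equivalent form $\sum_{s^2 \le 4m} H(4m - s^2) = \sum_{d\mid m}\max\{d, m/d\}$ (equivalent because $\sum_{d\mid m}\max\{d,m/d\} + \sum_{d\mid m}\min\{d,m/d\} = \sum_{d\mid m}(d + m/d) = 2\sum_{d\mid m} d$) by the reduction-theory argument of Hurwitz and Kronecker, counting $\mathrm{SL}_2(\Z)$-equivalence classes of integral binary quadratic forms of discriminant $s^2 - 4m$ and reorganizing them by the representations of $m$. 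This trades the analytic difficulty for a careful bookkeeping of reduced forms and their automorphisms, which is exactly where the weights $u(d)$ and the exceptional discriminants $-3, -4$ must be tracked.
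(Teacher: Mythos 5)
First, a point of comparison: the paper does not prove this theorem at all — it is quoted from the classical literature with citations to Kronecker, Hurwitz, and Gierster — so your proposal has to be judged against the known proofs rather than against an in-paper argument.

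Your primary (analytic) route contains a step that fails as written. At weight $2$, the regularized holomorphic projection of $\widehat{\mathcal{H}}\,\theta$ is in general only \emph{quasimodular}, lying in $\C E_2(\tau)\oplus\C E_2(2\tau)\oplus\C E_2(4\tau)$, and in this instance it provably does \emph{not} lie in $M_2(\Gamma_0(4))$. Concretely, for a general element $\alpha\bigl(E_2(\tau)-2E_2(2\tau)\bigr)+\beta\bigl(E_2(2\tau)-2E_2(4\tau)\bigr)$ of $M_2(\Gamma_0(4))$, a direct computation gives that its $q^4$-coefficient and its $q^8$-coefficient are \emph{both} equal to $-24(\alpha+\beta)$ (indeed each basis element has $q^{4n}$-coefficient $-24\,\sigma_1(\text{odd part of }n)$), whereas your matching step needs these coefficients to be $2\sigma_1(1)=2$ and $2\sigma_1(2)=6$. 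So no element of $M_2(\Gamma_0(4))$ has the divisor-sum coefficients you want: the two claims in your third step (projection lands in $M_2(\Gamma_0(4))$, and the correction equals $\sum_{d\mid m}\min\{d,m/d\}$) are jointly inconsistent with the identity you are trying to prove, and it is the first claim that is false. The $E_2$-anomaly created by the weight-$2$ regularization carries an essential part of the answer. The repair is standard and is exactly Mertens' proof of this relation: show the projection is quasimodular, identify it inside the three-dimensional space spanned by $E_2(\tau)$, $E_2(2\tau)$, $E_2(4\tau)$ (using $S_2(\Gamma_0(4))=0$), and only then read off the $q^{4m}$-coefficients. Note also that the two computations you defer — the identification of the projection and the evaluation of the correction term via the factorization bookkeeping $4m=(s-f)(s+f)$ — are the entire content of the proof; as written they are assertions.

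Your elementary alternative, by contrast, is sound in outline and is essentially the argument contained in the sources the paper cites: counting $\mathrm{SL}_2(\Z)$-classes of integral binary quadratic forms of discriminant $s^2-4m$, weighted by automorphisms, against the representations of $m$, to prove $\sum_{s^2\le 4m}H(4m-s^2)=\sum_{d\mid m}\max\{d,m/d\}$, which you correctly observed is equivalent to the stated identity via $\max\{d,m/d\}+\min\{d,m/d\}=d+m/d$; the boundary contribution $H(0)=-1/12$ when $m$ is a square is exactly what the convention built into $H$ is designed to absorb. If you intend to write out a complete proof, this reduction-theoretic route is the shorter and safer path; the analytic route, once corrected to work with quasimodular forms, buys you the higher-weight Eichler–Selberg-type relations as well.
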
 
	In particular, for a given prime $\l$, we have   
			\begin{equation} \label{CorApr24308pm}  
				\sum_{|s| < 2 \sqrt{\l}} H(4\l - s^{2} ) = 2\l. 
			\end{equation} 
	\begin{defn} \label{Aug20720p}
	For $D>0$, the \emph{modified Hurwitz Class Number} attached to a prime $p$ is 
			\begin{equation*}
			H_p(D):= \left\{
						\begin{array}{ll}
							0  & \mbox{if } p \text{ splits in } \O_{-D};  \\
							H(D)  & \mbox{if } p \text{ is inert in } \O_{-D} \\ 
									& \text{ and does not divide the conductor of } \O_{-D}; \\
							\frac{1}{2}H(D)  & \mbox{if } p \text{ is ramified in } \O_{-D} \\ 
									& \text{ but does not divide the conductor of } \O_{-D}; \\
							H(\frac{D}{p^{2}})  & \mbox{if } p \text{ divides the conductor of } \O_{-D}; 
						\end{array}
					\right. 	
			\end{equation*} 
	where $\O_{-D}$ is \emph{the} order of discriminant $-D$. If $D=0$, let $H_p(0) := \frac{p-1}{24}$. 
	\end{defn} 
	Notice that $H_p(D) \le H(D)$ for all $p$ and $D>0$. 
	In addition, when $D \equiv 0,3$ mod $4$ we get $H(D)>0$ and then $H_p(D)=0$ if and only if $p$ splits in $\O_{-D}$. 
	Moreover, if we let $-d$ denote the fundamental discriminant of $\O_{-D}$ and $\mathfrak{f} = \sqrt{{D}/{d}}$ its conductor, 
	then $d,\mathfrak{f} \le D$. But for $p$ to ramify, it precisely needs to divide $d$. 
	So if $D < p$, $H_p(D)$ can be defined as 
			\begin{equation*} 
				H_p(D)= \left\{
						\begin{array}{ll}
							0  & \mbox{if } p \text{ splits in } \O_{-D};  \\
							H(D)  & \mbox{if } p \text{ is inert in } \O_{-D}. 
						\end{array}
					\right.
			\end{equation*} 
	Gross computes the trace of Brandt matrices in terms of sums of generalized Hurwitz Class Numbers as follows. 
	\begin{theorem}[Proposition 1.9 in \cite{Gro87}] \label{TraceViaHCNs}
		For all $m \ge 0$, we have 
			\begin{equation*} 
				\text{Tr}(B(m)) = \sum_{\substack{s \in \Z \\ s^2 \le 4 m}} H_p (4 m - s^2).
			\end{equation*} 
	\end{theorem}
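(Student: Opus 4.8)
The plan is to interpret the diagonal entries of $B(m)$ as weighted counts of quaternions of a fixed reduced norm, and then to organise that count by the reduced trace, matching each admissible trace with a modified Hurwitz class number through Eichler's theory of optimal embeddings.

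First I would unravel $B_{ii}(m)$. Since $M_{ii} = I_i^{-1}I_i = R_i$ and $\Nm(M_{ii}) = 1$ (the order $R_i$ contains $1$), the theta series $\theta_{ii}$ specialises to $\frac{1}{2w_i}\sum_{a \in R_i} e^{2\pi i \Nm(a)\tau}$, so that
\[
B_{ii}(m) = \frac{1}{2w_i}\#\{a \in R_i : \Nm(a) = m\}, \qquad \text{Tr}(B(m)) = \sum_{i=1}^{n}\frac{\#\{a \in R_i : \Nm(a)=m\}}{\#R_i^{\times}},
\]
using $\#R_i^{\times} = 2w_i$. Every $a \in R_i$ is integral and satisfies $a^2 - sa + m = 0$ with $s := \text{Trd}(a) \in \Z$; because $\B = \B_{p,\infty}$ is ramified at $\infty$, any $a \notin \Q$ generates an imaginary quadratic field, forcing $s^2 \le 4m$. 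Grouping by the value of $s$ then reduces the theorem to proving, for each $s$ with $s^2\le 4m$ and $D := 4m - s^2$, the identity $T_s := \sum_i \#\{a \in R_i : \Nm(a)=m,\ \text{Trd}(a)=s\}/\#R_i^{\times} = H_p(D)$.

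The degenerate case $D = 0$ I would dispose of directly: then $a = s/2 = \pm\sqrt{m} \in \Z \sbst R_i$ is the unique solution in each $R_i$, so $T_s = \frac12\sum_i 1/w_i = \frac12\cdot\frac{p-1}{12} = \frac{p-1}{24} = H_p(0)$ by Eichler's mass formula. For $D > 0$, the element $a$ is not rational and $\Z[a]$ is the order $\O_{-D}$ of discriminant $-D$ inside $K := \Q(a)$. The key observation is that sending $a$ to the saturated order $K \cap R_i \supseteq \O_{-D}$ (together with its inclusion into $R_i$) gives a bijection between $\{a \in R_i : \Nm = m,\ \text{Trd} = s\}$ and the set of genuine optimal embeddings into $R_i$ of the orders $\O'$ with $\O_{-D} \sbst \O' \sbst \O_K$. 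Passing from genuine embeddings to $R_i^{\times}$-conjugacy classes (an embedding of $\O'$ has stabiliser exactly $\O'^{\times}$, of order $2u(\O')$) and summing over $i$ turns $T_s$ into
\[
T_s = \sum_{\O_{-D}\sbst \O' \sbst \O_K}\frac{1}{2u(\O')}\sum_{i=1}^{n} m(\O',R_i),
\]
where $m(\O',R_i)$ counts optimal embeddings up to conjugacy.

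At this point I would invoke Eichler's optimal-embedding formula for the definite quaternion algebra $\B_{p,\infty}$, which evaluates $\sum_i m(\O',R_i)$ as $h(\O')$ times a product of local embedding numbers; the factor at $\infty$ is $1$ since $K$ is imaginary, so only the factor $m_p(\O')$ at the ramified prime survives. Since the maximal order of the division algebra $\B_{p,\infty}\otimes \Q_p$ meets any quadratic subfield in its full ring of integers, $m_p(\O')$ vanishes unless $\O'$ is maximal at $p$, in which case it equals $1 - \left(\tfrac{K}{p}\right)$, detecting whether $p$ is split, inert or ramified in $K$. Feeding this back and using the identity $H(D) = \sum_{\O' \supseteq \O_{-D}} h(\O')/u(\O')$ (a reindexing of the definition of $H$) collapses the sum onto the $p$-maximal orders: when $p \nmid \f$ all $\O'$ are $p$-maximal and one reads off the split, inert and ramified cases $0$, $H(D)$, $\frac12 H(D)$. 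The main obstacle is precisely the remaining step, the careful local computation of $m_p(\O')$ in the ramified quaternion algebra --- in particular the behaviour when $p$ divides the conductor of $\O_{-D}$, where one must verify that the surviving $p$-maximal contributions assemble into $H(D/p^2)$ (and vanish when $p$ splits in $K$). Pinning down the normalisation of Eichler's mass formula together with these local factors exactly, rather than up to a constant, is the delicate part of the argument.
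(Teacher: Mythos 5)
The paper does not actually prove this statement --- it is quoted as Proposition 1.9 of \cite{Gro87} --- so the only meaningful comparison is with Gross's own argument, which your proposal reconstructs faithfully: diagonal Brandt entries as weighted counts of elements of reduced norm $m$ in the orders $R_i$, stratification by reduced trace $s$, the bijection with optimal embeddings of the intermediate orders $\O'$ with $\O_{-D} \sbst \O' \sbst \O_K$, Eichler's mass formula for the $D=0$ terms, and Eichler's optimal-embedding formula with local factors at $p$ for $D>0$. The two points you flag as delicate (the local embedding numbers in the ramified quaternion algebra, and the collapse of the $p$-maximal contributions to $H(D/p^{2})$ when $p$ divides the conductor, which needs the class-number relation for non-maximal orders) are precisely the steps Gross in turn delegates to Eichler, so your outline is correct and is essentially the same proof as in the cited source.
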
 
	\begin{eg}
		The number of supersingular elliptic curves over $\overline{\F}_p$ up to isomorphism is equal to the trace of $B(1)$, 
		as we will see in the next Section. 
		If we take $m=1$ in the above theorem, we get 
			\begin{align*}
				\text{Tr}(B(1)) &= \sum_{\substack{s \in \Z \\ s^2 \le 4}} H_p (4 - s^2) \\
							&= H_p(4) + 2H_p(3) + 2H_p(0) \\
							&= \frac{1}{4} \Big(1- \big( \dfrac{-4}{p} \big) \Big) 
								+  \frac{1}{3} \Big(1- \big( \dfrac{-3}{p} \big) \Big) + \frac{p-1}{12}, 
			\end{align*} 
		which proves that the number of 
		supersingular elliptic curves is indeed as described in (\ref{NumberOfSSECs}). 
	\end{eg}

\subsection{Supersingular isogeny graphs} \label{SectionSupersingularisogenygraphs}
	Supersingular isogeny graphs are graphs that arise from supersingular elliptic curves over finite fields 
	and isogenies of a given degree between them. 
	Fix two different primes $\l$ and $p$, with $p >3$. 
	In practice, one would let $\l$ be small (usually $\l \in \{2,3,5,7\}$) and let $p$ be a very large prime (of cryptographic size). 
	The \emph{supersingular $\l$-isogeny graph (of level 1)  for the prime $p$}, denoted by $\Lambda_p(\l)$, is constructed as follows. 
	The vertex set of this graph consists of the supersingular moduli (i.e. the set of $j$-invariants of supersingular elliptic curves). 
	Equivalently, one can view the vertices as isomorphism classes of supersingular elliptic curves over $\ol{\F}_{p}$. 
	The number of vertices $n$ is then given by (\ref{NumberOfSSECs}). 
	
	Defining the edges of $\Lambda_p(\l)$ is a bit more subtle. 
	For every \ss \ elliptic curve $E$ and (\emph{cyclic}) 	subgroup $C$ of $E$ of order $\l$, 
	draw an edge from $E$ to $E/C$. 
	Since all elliptic curves have $\l+1$ subgroups of order $\l$, 
	we obtain a directed graph $\Lambda_p(\l)$ of out-degree $\l+1$. 
	Equivalently, we can define the edges of $\Lambda_p(\l)$ as $\l$-isogenies up to \emph{automorphism of their images}, 
	(a.k.a. up to \emph{post-composing by automorphisms}) 
	i.e. two isogenies $\phi_1, \phi_2 :E_1 \longrightarrow E_2$ are equivalent 
	if there is some $\alpha \in \text{Aut}(E_2)$ such that $\phi_1 = \alpha \circ \phi_2$. 
		
		Recall that for every isogeny $\phi : E_1 \longrightarrow E_2$, 
		there is a dual isogeny $\phi^{\vee} : E_2 \longrightarrow E_1$ such that $\deg (\phi) = \deg (\phi^{\vee})$ and $\phi \circ \phi^{\vee} = [\deg(\phi)]$. 
		One might hence try to simplify these graphs by associating $\phi$ with $\phi^{\vee}$ and hope to obtain undirected graphs. 
		However, this is not possible in general. This is because we are considering isogenies up to automorphisms of the image, 
		i.e. up to composition with automorphisms \emph{from the left} (and not the right!). 
		Here is the prototypical example illustrating why one cannot associate $\phi$ to $\phi^{\vee}$ in general. 
		Let $f,g : E_1 \longrightarrow E_2$ be two isogenies. 
		Suppose that $f \sim g$, i.e. there is an automorphism $\alpha \in \text{Aut}(E_2)$ such that $f = \alpha \circ g$. 
		However, $f^{\vee} = (\alpha \circ g)^{\vee} = g^{\vee} \circ \alpha^{\vee}$ and we are now composing 
		with $\alpha^{\vee}$ from the right, not from the left.  
		So we do not necessarily know that $f^{\vee} \sim g^{\vee}$, as they differ by an automorphism of the domain, not the image. 
		If we wish to guarantee that $f\sim g$ if and only if $f^{\vee} \sim g^{\vee}$, 
		we would need that for all isogenies $f : E_1 \longrightarrow E_2$ and all $ \alpha \in \text{Aut}(E_1)$ 
		there exists some $\alpha’ \in \text{Aut}(E_2)$ such that $f \circ \alpha = \alpha’ \circ f.$
		The only way for this to work would be to only have trivial automorphisms: $\{\pm \text{Id} \}$. 
		However, we know from Theorem \ref{21sep101136a} that the elliptic curves $E_0$ and $E_{1728}$ both (and only them) 
		have non trivial automorphism groups. 
		One way (\emph{the} way) to to avoid these curves is to require that the prime $p$ is congruent to $1$ modulo $12$. 
		Then, $E_0$ and $E_{1728}$ would not be supersingular and all the supersingular elliptic curves will have 
		automorphism group $\{\pm \text{Id} \}$. 
		Hence, if $p \equiv 1 \mod 12$, then for all $\l$, 
		the graph $\Lambda_p(\l)$ can be viewed as an undirected graph of degree $\l+1$. 
		\begin{figure}[h!] 
		\minipage[3cm]{0.5\textwidth}
			\centering 
			\include{NEWSSIGp=73l=2}
			\caption{The directed graph $\Lambda_{73}(2)$.}
			\label{IsogGraphp73l2}
		\endminipage
		\hfill 
		\minipage{0.5\textwidth}
			\centering
			\include{NEWSSIGp=71l=2Directed}
			\caption{The undirected graph $\Lambda_{71}(2)$.}
			\label{IsogGraphp71l2}
		\endminipage
		\end{figure}
		
		These supersingular isogeny graphs are directly related to Brandt matrices. 
		Indeed, $B_{ij}(\l)$ is the number of equivalence classes of isogenies from $\phi: E_i \longrightarrow E_j$ of degree $\l$ 
		(cf. Proposition 2.3 in \cite{Gro87}). 
		Equivalently, it is the number of subgroups $C$ of $E_i$ of order $\l$ such that $E_i/C \cong E_j$. 
		Therefore, $B(\l)$ (where it is customary to \emph{not} denote the dependence on $p$) 
		is the adjacency matrix of the graph $\Lambda_p(\l)$. 
		In particular, $B(\l)$ is an $n \times n$ matrix and $\text{Tr}(B(1))= \text{Tr} (\text{Id})=n$.  
				
		An isogeny of degree $\l^{a}$ between two supersingular elliptic curves $E_i$ and $E_j$ 
		can be factored as a product of isogenies as in Proposition \ref{FactoringIsogenies}
		and thus can be seen as a path of length $a$ in $\Lambda_p(\l)$ from $E_i$ to $E_j$. 
	It is known that requiring the kernel of a map $\phi \in \text{Hom}(E_i,E_j)$ of degree $\l^{a}$ to be 
	cyclic is equivalent to requiring that the corresponding path of length $a$ on $\Lambda_p(\l)$ 
	does not involve any backtracking. 
	Indeed, factor $\phi$ as $\prod_{k} \psi_k$ (see proposition \ref{FactoringIsogenies}). 
	Backtracking can only be achieved through taking the same edge twice (once going forward and once going backward). 
	This means that we are composing some isogeny $\psi_k$ (going forward) with its dual $\psi_k^{\vee}$ (going backward). 
	And we know that $\psi_k \circ \psi_k^{\vee} =[\deg(\psi_k)] = [\l]$. 
	Thus, we see that an isogeny (a path) $\phi$ of degree $\l^{a}$ (of length $a$) involves backtracking 
	if and only if the factorization of $\psi$ involves some \emph{scalar} isogeny 
	(i.e. an isogeny of the form $[m]$ for some $m \in \Z$).  
	And the scalar isogeny $[\l]$ has a kernel isomorphic to $(\Z/\l\Z)^{2}$, 
	whereas all the other isogenies in the factorization of $\phi$ have a kernel isomorphic to $\Z/\l^{2}\Z$. 
	
	The supersingular isogeny graphs $\Lambda_p(\l)$ have many nice properties. 
	They are sparse, as are the Brandt matrices $B(\l)$. 
	They are $\l+1$ regular by part (1) of proposition \ref{GrossProp2.7}. 
	The second eigenvalue of the adjacency matrix of $\Lambda_p(\l)$ is bounded above by $\sqrt{\l}$ (see \cite{Piz90}). 
	They are also optimal expanders, which implies that a walk on $\Lambda_p(\l)$ quickly becomes close to a uniform distribution.  
	More precisely, 
	after 
	$\log(2n)/\log ({(\l+1)}/{2\sqrt{\l}} )$ steps, 
	a random walk on $\Lambda_p(\l)$ 
	approximates the uniform distribution with an error of $1/2n$ 
	(see Corollary 6 in \cite{Gol01} or Proposition 2.1 in \cite{DFJP14}). 
		
\section{Loops and multi-edges} \label{SectionLoopsAndMultiEdges}
		It is known that the \ss \ isogeny graphs $\Lambda_p(\l)$ are very close to being simple graphs. 
		We will make this precise by studying the number of loops and multi-edges that they usually contain. 
				
\subsection{The number of loops} \label{The_number_of_loops}
		In this section only, we will not assume that $p$ is necessarily congruent to $1$ modulo $12$. 
		We are interested in computing the 
		number of loops that supersingular isogeny graphs contain. 
		To find out, we need to count the number of loops at every vertex.  
		The vertices correspond to the supersingular elliptic curves $\{E_i\}_{i=1,...,n}$ and the edges between them 
		correspond to isogenies up to automorphism from the left (i.e. automorphism of  the image). Hence, we can write 
			\begin{align} \label{May27406AM}
			\begin{split}
				\sharp \{\text{loops in } \Lambda_p(\l) \} 
							&=\sum_{i=1}^{n} \# \{f \in \text{End}(E_i) :  \deg(f) = \l  \}/\# \text{Aut}(E_i) \\ 
							&= \sum_{i=1} B_{ii} (\l) \\
							&= \text{Tr}(B(\l)) \\
							&= \sum_{\substack{s \in \Z \\ s^2 \le 4 \l}} H_p (4 \l - s^2). 
			\end{split} 
			\end{align} 
		Let us look closer at the quantity $H_p (4 \l - s^2)$. 
		Since $\l$ is a prime, $4 \l - s^2 \not = 0$, and we do not have to worry about the case $H_p(0)$.
		This case will appear in section \ref{TheNumberOfMultiEdges}, 
		when dealing with the number of multiple edges. 		
		As $4 \l - s^2 \equiv 0,3$ mod $4$, we have $H(4 \l - s^2) >0$ for all $s^2 \le 4 \l$. 
		So the above computation 
		and Definition \ref{Aug20720p} 
		give the following known result. 
		\begin{theorem}[The No-Loop Theorem]
		\label{NoLoopsThm}
			The \ss \ isogeny graph $\Lambda_p(\l)$ has no loops 
			if and only if $p$ splits in $\O_{s^2 - 4 \l}$ for all $s^2 \le 4 \l$, 
			i.e. if and only if $\big( \frac{s^2 - 4 \l}{p} \big) =1$ for all $s^2 \le 4 \l$. 
		\end{theorem}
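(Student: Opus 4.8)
The plan is to read the statement off directly from the loop count already assembled in (\ref{May27406AM}), which gives
\[
	\sharp\{\text{loops in } \Lambda_p(\l)\} = \sum_{\substack{s \in \Z \\ s^2 \le 4\l}} H_p(4\l - s^2).
\]
Thus $\Lambda_p(\l)$ is loop-free exactly when the right-hand side vanishes. The first---and decisive---observation I would make is that this is a sum of \emph{non-negative} terms: each modified Hurwitz class number satisfies $0 \le H_p(D) \le H(D)$. A finite sum of non-negative reals is zero if and only if every summand is zero, so no cancellation can occur, and the single global condition ``no loops'' is equivalent to the collection of local conditions $H_p(4\l - s^2) = 0$, one for each integer $s$ with $s^2 \le 4\l$.

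The next step is to decide when one such term vanishes. Because $\l$ is prime we have $4\l - s^2 \ne 0$, so $H_p(0)$ never intervenes; and since $4\l - s^2 \equiv 0, 3 \pmod 4$, the remark following the Hurwitz class number definition gives $H(4\l - s^2) > 0$. I would then run through the branches of Definition~\ref{Aug20720p}: the inert branch contributes $H(4\l - s^2)$, the ramified branch $\tfrac12 H(4\l - s^2)$, and the conductor branch $H\big((4\l - s^2)/p^2\big)$, all of which are strictly positive. Hence the only branch yielding the value $0$ is the first one, namely that $p$ splits in the order $\O_{s^2 - 4\l}$ of discriminant $s^2 - 4\l = -(4\l - s^2)$. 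This is precisely the equivalence ``$H_p(D) = 0$ iff $p$ splits in $\O_{-D}$'' (valid for $D \equiv 0,3 \bmod 4$) recorded in the background.

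Assembling the two steps yields: $\Lambda_p(\l)$ has no loops if and only if $p$ splits in $\O_{s^2 - 4\l}$ for every $s$ with $s^2 \le 4\l$. The only thing left is to rephrase splitting via a Kronecker symbol. For the odd prime $p$, splitting in the quadratic order of discriminant $s^2 - 4\l$ is equivalent to $s^2 - 4\l$ being a nonzero quadratic residue modulo $p$, i.e. to $\big(\frac{s^2 - 4\l}{p}\big) = 1$ (the value $-1$ corresponding to inert primes and $0$ to ramified ones). This gives the second formulation in the statement.

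I do not expect a genuine obstacle here, as the real machinery---Gross's trace formula (Theorem~\ref{TraceViaHCNs}) and the positivity/splitting dictionary for $H_p$---is already available. The only points demanding care are the two hinges of the argument: confirming term-by-term non-negativity so that the vanishing of the sum forces each term to vanish (ruling out cancellation), and verifying that a summand can be zero \emph{only} through splitting, which is exactly what the positivity $H(4\l - s^2) > 0$ secures by eliminating the inert, ramified, and conductor branches.
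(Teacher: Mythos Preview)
Your proposal is correct and follows essentially the same route as the paper: both start from the loop count (\ref{May27406AM}), observe that $4\l - s^2 \ne 0$ since $\l$ is prime, use $4\l - s^2 \equiv 0,3 \pmod 4$ to get $H(4\l - s^2) > 0$, and then invoke the dictionary after Definition~\ref{Aug20720p} that $H_p(D) = 0$ iff $p$ splits in $\O_{-D}$. You are simply more explicit than the paper in spelling out the non-negativity of the summands and in walking through the branches of Definition~\ref{Aug20720p}, but the argument is the same.
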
 
		In most application, the prime $\l$ is very small (usually $\l \in \{2,3\}$). Once we fix $\l$, 
		we only have a finite number of orders in which we need to investigate the behaviour (splitting) of $p$. 
		Moreover, the splitting of $p$ in such orders can be phrased in terms of congruence conditions on $p$ 
		via the quadratic reciprocity.
		Let us now study the presence of loops in the graphs $\Lambda_p(2)$ and $\Lambda_p(3)$. 
		\begin{eg}\label{exampleLoopsl=2}
			If $\l=2$, we need to consider the orders $\O_{-D}$ where $-D \in \{ -8, -7, -4\}$. 
			 The \hyperref[NoLoopsThm]{No-Loop Theorem} implies that $\Lambda_p(2)$ has no loops if and only if 
				\[ 1= \Big(\frac{-8}{p} \Big) = \Big(\frac{-7}{p} \Big) =\Big(\frac{-4}{p} \Big). \] 
			Using quadratic reciprocity, 
			we obtain the congruence conditions 
			\begin{equation*} \label{Eg_l=2_1}
				\left\{
						\begin{array}{ll}
							p \equiv 1,2,4 \mod 7; \\ 
							p \equiv 1 \mod 8; 
						\end{array}
					\right.
			\end{equation*} 
		i.e. $p \equiv 1, 9, 25 \mod 56$. 
		One can require in addition that $p \equiv 1 \mod 12$ so that the graphs are undirected.  
		And so, one would get:
			\begin{equation} \label{Eg_l=2_2}
				p \equiv 1, 25, 121 \mod 168. 
			\end{equation} 
		Take for example the primes $193$ and $113$, respectively, 
		to obtain the isogeny graphs $\Lambda_{193}(2)$ and $\Lambda_{113}(2)$ 
		shown in Figures \ref{IsogGraphp193l2} and \ref{IsogGraphp457l2}. 
		These are the smallest primes such that the graph $\Lambda_p(2)$ is undirected (respectively directed) and has no loops. 
		\begin{figure}[h!]
		\minipage{0.4\textwidth} 
			\centering 
			\include{NEWSSIGp=193l=2}
			\caption{The graph $\Lambda_{193}(2)$}
			\label{IsogGraphp193l2}
		\endminipage
		\hfill 
		\minipage{0.58\textwidth}
			\centering
			\include{NEWSSIGp=113l=2}
			\caption{The graph $\Lambda_{113}(2)$}
			\label{IsogGraphp457l2}
		\endminipage
		\end{figure}
		\end{eg}		
		\begin{eg}\label{exampleLoopsl=3}
			Let $\l =3$. 
			As above, the graph $\Lambda_p(3)$ has no loops if and only if $p$ splits 
			in the orders $\O_{-D}$ where $-D \in \{ -8, -7, -4\}$. 
			Hence $\Lambda_p(3)$ has no loops if and only if 
				\begin{equation*} \label{Condition_l=3}
					1= \Big(\frac{-12}{p} \Big) = \Big(\frac{-11}{p} \Big) =\Big(\frac{-8}{p} \Big) = \Big(\frac{-3}{p} \Big).
				\end{equation*} 
			These conditions translate to congruence conditions:  
			\begin{equation*} \label{Eg_l=3_1}
						\left\{
									\begin{array}{ll}
										p \equiv 1 \mod 3; \\ 
										p \equiv 1,3 \mod 8; \\ 
										p \equiv 1,3,4,5,9 \mod 11;  
									\end{array}
							\right. \\  
			\end{equation*} 
		i.e.  $p \equiv 1, 25,  49, 67, 91, 97, 115, 163, 169, 235 \mod 264$. 
		If we further require that $p \equiv 1 \mod 12$, we then get 
			\begin{equation} \label{Eg_l=3_3}
				p \equiv 1, 25, 49, 97, 169 \mod 264.  
			\end{equation} 
		The smallest such prime is $97$. It gives the isogeny graph $\Lambda_{97}(3)$ shown in Figure \ref{IsogGraphp97l3}.
		\end{eg}
		We summarize the above examples in the following theorem. 
		\begin{theorem} \label{apr24259p}
			The graph $\Lambda_p(2)$ is undirected and has no loops 
			if and only if $p \equiv 1, 25, 121 \mod 168$. 
			Similarly, the graph $\Lambda_p(3)$ is undirected and has no loops 
			if and only if $p \equiv 1, 25, 49, 97, 169 \mod 264$.
		\end{theorem}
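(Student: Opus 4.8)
The plan is to derive both characterizations as direct consequences of the No-Loop Theorem (Theorem \ref{NoLoopsThm}) combined with the undirectedness criterion from Section \ref{SectionSupersingularisogenygraphs}, essentially assembling Examples \ref{exampleLoopsl=2} and \ref{exampleLoopsl=3}. First I would recall that $\Lambda_p(\l)$ is undirected precisely when the curves $E_0$ and $E_{1728}$ fail to be supersingular, which happens if and only if $p \equiv 1 \mod 12$. Hence ``undirected and with no loops'' is equivalent to the conjunction of $p \equiv 1 \mod 12$ with the no-loop condition, and the argument splits into the two cases $\l = 2$ and $\l = 3$.

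For the no-loop part I would invoke Theorem \ref{NoLoopsThm}: the graph has no loops exactly when $\left(\frac{s^2 - 4\l}{p}\right) = 1$ for all $s$ with $s^2 \le 4\l$. When $\l = 2$ the admissible $s \in \{0, \pm 1, \pm 2\}$ give discriminants $s^2 - 8 \in \{-8, -7, -4\}$, so the condition reads $\left(\frac{-8}{p}\right) = \left(\frac{-7}{p}\right) = \left(\frac{-4}{p}\right) = 1$; when $\l = 3$ the admissible $s \in \{0, \pm 1, \pm 2, \pm 3\}$ give $s^2 - 12 \in \{-12, -11, -8, -3\}$, so the condition reads $\left(\frac{-12}{p}\right) = \left(\frac{-11}{p}\right) = \left(\frac{-8}{p}\right) = \left(\frac{-3}{p}\right) = 1$.

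Next I would translate each Legendre-symbol equation into a congruence on $p$ using multiplicativity and quadratic reciprocity. As computed in Example \ref{exampleLoopsl=2}, for $\l = 2$ this produces the system $p \equiv 1,2,4 \mod 7$ and $p \equiv 1 \mod 8$, i.e.\ $p \equiv 1, 9, 25 \mod 56$; for $\l = 3$, as in Example \ref{exampleLoopsl=3}, it produces $p \equiv 1 \mod 3$, $p \equiv 1,3 \mod 8$, and $p \equiv 1,3,4,5,9 \mod 11$. Intersecting each system with the undirectedness condition $p \equiv 1 \mod 12$ and reassembling by the Chinese Remainder Theorem --- modulo $168 = \operatorname{lcm}(56,12)$ in the first case and modulo $264 = \operatorname{lcm}(8\cdot 3 \cdot 11,\, 12)$ in the second --- should yield $p \equiv 1, 25, 121 \mod 168$ and $p \equiv 1, 25, 49, 97, 169 \mod 264$ respectively.

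The hard part will be the bookkeeping in this final reassembly: one must verify that exactly the listed residue classes survive once $p \equiv 1 \mod 12$ is imposed, and no others. In particular, adjoining $p \equiv 1 \mod 4$ sharpens the no-loop constraint $p \equiv 1,3 \mod 8$ down to $p \equiv 1 \mod 8$, which is precisely what cuts the ten no-loop classes modulo $264$ down to the five that appear in the statement. A minor technical point, automatic once $p \ge 5$ and the surviving classes already exclude $p = 7, 11$, is that $p$ divides none of the fixed discriminants, so that every Legendre symbol is defined and the ``splits'' criterion of Theorem \ref{NoLoopsThm} genuinely reduces to $\left(\frac{s^2-4\l}{p}\right) = 1$.
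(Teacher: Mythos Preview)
Your proposal is correct and follows exactly the approach of the paper: the theorem is stated there as a summary of Examples \ref{exampleLoopsl=2} and \ref{exampleLoopsl=3}, which carry out precisely the steps you describe---applying the No-Loop Theorem to list the relevant discriminants, converting the Legendre-symbol conditions to congruences via quadratic reciprocity, and then intersecting with $p \equiv 1 \bmod 12$ using CRT. Your write-up is in fact somewhat more explicit than the paper's, particularly in explaining why the $p \equiv 1 \bmod 4$ part of the undirectedness condition prunes the ten residues modulo $264$ down to five.
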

		
		One way to compare supersingular isogeny graphs is to overlay them on top of each other in a compatible way: 
		fix the vertices to be the supersingular $j$-invariants, 
		then draw the edges of both graphs $\Lambda_p(\l_1)$ and $\Lambda_p(\l_2)$ on the same set of vertices, 
		making sure to draw the edges of $\Lambda_p(\l_1)$ and $\Lambda_p(\l_2)$ in different colours, 
		as is done in Figure \ref{HeuristicallyIndp757l35}. 
		Denote this graph by $\Lambda_{p}(\l_1,\l_2)$. 
		One might for example use Theorem \ref{apr24259p} to search for primes such that 
		both $\Lambda_p(2)$ and $\Lambda_p(3)$ have no loops. 
		The first such prime is $1873$.
		\begin{figure}[h!]
		\minipage{0.4\textwidth} 
			\centering 
			\include{NEWSSIGp=97l=3}
			\caption{The graph $\Lambda_{97}(3)$.}
			\label{IsogGraphp97l3}
		\endminipage
		\hfill 
		\minipage{0.6\textwidth} $ $ \hspace{0.5cm}
				\centering
				\include{NEWnolabelsSSIGp=1009l=2}
				\caption{The graph $\Lambda_{1009}(2)$.}
			\label{IsogGraphp1009l2}
		\endminipage
		\end{figure}

		We conclude this section with a bound (only depending on $\l$) 
		on the total number of loops in the graph $\Lambda_{p}(\l)$. 
		Equations (\ref{May27406AM}) and (\ref{CorApr24308pm}) 
		give the following bound 
				\begin{equation} \label{Apr050923p}
					\sharp \{\text{loops in } \Lambda_p(\l) \} 
							=  \sum_{\substack{s \in \Z \\ s^2 < 4 \l}} H_p (4 \l - s^2) 
					\le \sum_{\substack{s \in \Z \\ s^2 < 4 \l}} H(4 \ell - s^2) 
					= 2 \l. 
				\end{equation} 
	
		\begin{cor} \label{ThmOnNumbOfLoops}
			The supersingular isogeny graph $\Lambda_p(\l)$ has at most $2 \l$ loops. 
		\end{cor}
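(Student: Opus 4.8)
The plan is to read the bound directly off the loop-counting identity, assembling the chain of (in)equalities recorded in (\ref{Apr050923p}). First I would invoke (\ref{May27406AM}), which already expresses the total number of loops as the trace of the Brandt matrix,
\[
	\sharp \{\text{loops in } \Lambda_p(\l) \} = \text{Tr}(B(\l)) = \sum_{\substack{s \in \Z \\ s^2 \le 4\l}} H_p(4\l - s^2),
\]
the last equality being Gross's trace formula, Theorem \ref{TraceViaHCNs}. So the whole task reduces to bounding this sum of modified Hurwitz class numbers by $2\l$.

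The second step is to pin down the index set and to remove the dependence on $p$. Here I would note that because $\l$ is prime, the boundary case $s^2 = 4\l$ cannot occur: it would force $\l = (s/2)^2$ to be a perfect square. Hence every term has $4\l - s^2 > 0$ (so $H_p(0)$ never appears, a point flagged in Section \ref{The_number_of_loops}), and the summation is in fact over the strict range $s^2 < 4\l$, equivalently $|s| < 2\sqrt{\l}$. On this range I would apply the termwise inequality $H_p(D) \le H(D)$, valid for all $p$ and all $D > 0$ (recorded just after Definition \ref{Aug20720p}), to pass from the $p$-dependent sum to the unramified Hurwitz class number sum.

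The final step is to evaluate that sum in closed form using the classical Eichler/Hurwitz identity. Specialising Theorem \ref{TheoremOnMoregeneralSumofnonmodifiedHCNs} to $m = \l$ prime gives precisely (\ref{CorApr24308pm}), namely $\sum_{|s| < 2\sqrt{\l}} H(4\l - s^2) = 2\l$. Since the index set $|s| < 2\sqrt{\l}$ is exactly the strict range identified above, stringing the three steps together yields
\[
	\sharp \{\text{loops in } \Lambda_p(\l) \} = \sum_{\substack{s \in \Z \\ s^2 < 4\l}} H_p(4\l - s^2) \le \sum_{\substack{s \in \Z \\ s^2 < 4\l}} H(4\l - s^2) = 2\l,
\]
which is the claimed bound.

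I do not expect a genuine obstacle here, since the statement is a corollary of results already established in the excerpt; the only points requiring care are bookkeeping ones. Specifically, I would make sure to justify the passage from the non-strict range $s^2 \le 4\l$ of the trace formula to the strict range $s^2 < 4\l$ of the Hurwitz identity (which hinges solely on $\l$ being prime), and to confirm that the inequality $H_p \le H$ is being applied only where $D > 0$, so that no term $H_p(0) = \tfrac{p-1}{24}$ sneaks in and breaks the $p$-free bound. Everything else is a direct concatenation of (\ref{May27406AM}), the termwise bound, and (\ref{CorApr24308pm}).
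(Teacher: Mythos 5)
Your proof is correct and follows exactly the paper's own argument: the paper's display (\ref{Apr050923p}) is precisely the chain you assemble, namely the loop count from (\ref{May27406AM}), the termwise bound $H_p(D) \le H(D)$ on the strict range $s^2 < 4\l$ (which is legitimate since $\l$ prime rules out $s^2 = 4\l$), and the evaluation $\sum_{|s|<2\sqrt{\l}} H(4\l - s^2) = 2\l$ from (\ref{CorApr24308pm}). Your added care about excluding the $H_p(0)$ term matches the remark the paper makes in Section \ref{The_number_of_loops}, so there is nothing to correct.
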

		
		Note that this bound does not depend on $p$. 
		In particular, for all $p$, the graph $\Lambda_p(2)$ cannot have more than $4$ loops 
		and $\Lambda_p(3)$ cannot have more than $6$ loops. 
		\begin{rmk} \label{Jan170354p}
			As we vary the prime $p$, we expect it to split in $\O_{s^2 - 4 \ell}$ 
			half of the time and remain inert the other half of the time, by Chebotarev’s density theorem. 
			Hence, for fixed $\ell$, 
			we expected the number of loops in the graph $\Lambda_p(\l)$ to be $\l$. 
		\end{rmk}

\subsection{The number of multi-edges} \label{TheNumberOfMultiEdges}
		We will, from now on, assume that $p \equiv 1 \mod 12$, unless stated otherwise, 
		so that the graphs $\Lambda_p(\l)$ are undirected. 
		It will make more sense to discuss multi-edges in the context of undirected graphs. 
A \emph{multi-edge}, also known as a \emph{double-edge}, is a pair of distinct edges between two (not necessarily different) vertices. 
Note that we allow the two vertices to be the same. 
So in particular, two loops on the same vertex will count as a double-edge. 
Note that if we have $m$ edges between two vertices (or $m$ loops on one vertex), this will give rise to ${m \choose 2}$ multi-edges. 
Using similar methods to the previous section, one can study the number of multi-edges in the graph $\Lambda_p(\l)$ 
and identify conditions under which this graph has no multi-edges. 
Let $n := ({p-1})/{12}$ denote the number of vertices in $\Lambda_p(\l)$. 

\begin{theorem} \label{July180316AM}
	An undirected graph $\Lambda_p(\l)$ has no multi-edges if and only if $\text{Tr}(B(\l^{2})) =n$. 
\end{theorem}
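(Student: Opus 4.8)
The plan is to translate the condition $\text{Tr}(B(\l^2)) = n$ into a purely combinatorial statement about the adjacency matrix $A := B(\l)$ of $\Lambda_p(\l)$, using the recursion of Proposition \ref{GrossProp2.7}. Since we are in the regime $p \equiv 1 \mod 12$, every $w_i = 1$, so the symmetry relation $w_j B_{ij}(\l) = w_i B_{ji}(\l)$ of Proposition \ref{GrossProp2.7}(4) shows that $A$ is symmetric, as befits an undirected graph. Taking $k = 2$ in Proposition \ref{GrossProp2.7}(3) and recalling that $B(1)$ is the identity matrix gives
\begin{equation*}
	B(\l^2) = B(\l) B(\l) - \l B(1) = A^2 - \l I,
\end{equation*}
so that $\text{Tr}(B(\l^2)) = \text{Tr}(A^2) - \l n$. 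Hence the claim is equivalent to showing that $\Lambda_p(\l)$ has no multi-edges if and only if $\text{Tr}(A^2) = n(\l+1)$.

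First I would compute $\text{Tr}(A^2)$ as a sum of squares of entries. Using the symmetry of $A$,
\begin{equation*}
	\text{Tr}(A^2) = \sum_{i,j} A_{ij} A_{ji} = \sum_{i,j} A_{ij}^2.
\end{equation*}
By Proposition \ref{GrossProp2.7}(1) the entries $A_{ij}$ are non-negative integers and each row sum equals $\sum_{d \mid \l,\, (d,p)=1} d = \l + 1$. Since $x^2 \ge x$ for every non-negative integer $x$, with equality exactly when $x \in \{0,1\}$, I would sum termwise to obtain
\begin{equation*}
	\text{Tr}(A^2) = \sum_{i,j} A_{ij}^2 \ \ge\ \sum_{i,j} A_{ij} = \sum_{i=1}^n (\l+1) = n(\l+1),
\end{equation*}
with equality if and only if every entry $A_{ij}$ lies in $\{0,1\}$.

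It then remains to identify the equality condition with the absence of multi-edges. Because $A_{ij} = B_{ij}(\l)$ counts the edges between $E_i$ and $E_j$ (and, on the diagonal, the loops at $E_i$), the matrix $A$ is a $0$--$1$ matrix precisely when there is at most one edge between any ordered pair of vertices, including a vertex with itself. By the definition of a multi-edge --- in particular the convention that two loops at a single vertex form a double-edge --- this is exactly the statement that $\Lambda_p(\l)$ has no multi-edges. Combining the three steps yields $\text{no multi-edges} \iff \text{Tr}(A^2) = n(\l+1) \iff \text{Tr}(B(\l^2)) = n$, where both implications come from the equality case of the inequality above. I expect the only delicate point to be the bookkeeping on the diagonal: one must check that the loop-counting convention implicit in $B_{ii}(\l)$ (each loop contributing $1$ to the entry and $1$ to the degree, as already used in Section \ref{The_number_of_loops}) is consistent with the paper's definition of a multi-edge, so that $A_{ii} \le 1$ correctly encodes ``at most one loop.'' Everything else is the elementary inequality $\sum A_{ij}^2 \ge \sum A_{ij}$ together with the recursion for $B(\l^2)$.
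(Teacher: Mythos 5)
Your proposal is correct and is essentially the paper's own second argument: the paper likewise uses the recursion $B(\l^2)=B(\l)^2-\l B(1)$, the symmetry and row-sum properties of Proposition \ref{GrossProp2.7}, and the observation that $B_{ij}(\l)^2 - B_{ij}(\l)\ge 0$ with equality exactly for entries in $\{0,1\}$, arriving at $\text{Tr}(B(\l^{2})) = n + \sum_{i,j} \bigl(B_{ij}(\l)^{2} - B_{ij}(\l)\bigr)$. The only cosmetic difference is that you package this as the trace inequality $\text{Tr}(A^2)\ge n(\l+1)$, while the paper sums the identity $B_{ii}(\l^{2}) = 1 + \sum_{j}\bigl(B_{ij}(\l)^{2} - B_{ij}(\l)\bigr)$ over $i$; the paper additionally offers a first, isogeny-theoretic proof via $g^{\vee}\circ f$, which you did not need.
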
 
\begin{proof}
		As we can see in Figure \ref{ProofOfNoMultiEdges}, 
		multi-edges $f,g:E_i \longrightarrow E_j$ generate a non trivial (i.e. non-scalar) 
		isogeny $g^{\vee} \circ f$ of degree $\l^{2}$. 
		In addition, an isogeny of degree $\l^{2}$ must factor through two isogenies of degree $\l$. 
		Therefore, the graph $\Lambda_p(\l)$ has no multi-edges 
		if and only if, for all $i$, the only endomorphism of degree $\l^{2}$ of $E_i$ is the trivial one 
		(the multiplication by $\l$ isogeny), 
		i.e. $B_{ii}(\l^{2}) = 1$ for all $i$. 
		Since, $B_{ii}(\l^{2}) \ge 1$ for all $i$, 
		the graph $\Lambda_p(\l)$ has no multi-edges if and only if $\text{Tr}(B(\l^{2})) =n$. 
		
		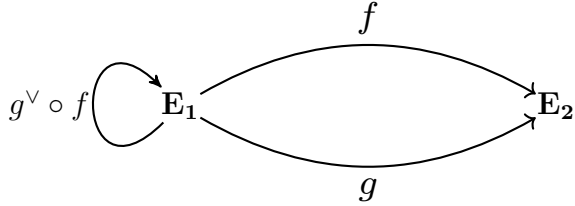
\begin{figure}[h]
			\begin{center}
			\minipage{0.6\textwidth}
			\begin{tikzpicture}[scale=2.5,auto=left,every node/.style={circle,inner sep=0pt}] 
			  \node[minimum size=0cm] (n1) at (0,0){$\mathbf{E_1}$} ;    
			  \node[minimum size=0.1cm] (n2) at (2,0){$\mathbf{E_2}$} ;       
			  \path[->,every node/.style={font=\sffamily\tiny},thick] (n1) edge[bend left] node[pos=0.5,above] 
			  			{\scalebox{1.7}{$f$}} (n2)  ;
			  \draw[->,every node/.style={font=\sffamily\tiny},thick] (n1)  edge[bend right] node[pos=0.5, below] 
			  			{\scalebox{1.7}{$g$}} (n2);
			  \Loop[dist=0.7cm,dir=WE,label=$g^{\vee} \circ f$,labelstyle=left](n1)
 			 \end{tikzpicture}  
			\caption{Multiple edges.}
			\label{ProofOfNoMultiEdges}
			\endminipage 
			\end{center}
		\end{figure} 
		
	A slightly different way to prove this theorem is to realize that a graph has no double edges if and only if 
	all the entries its adjacency matrix are either $0$ or $1$, which we write as $B_{ij}(\l)^{2} = B_{ij}(\l)$ for all $i,j$. 
	Now, as $B_{ij}(\l)^{2} - B_{ij}(\l) \ge 0$ for all $i,j$, the graph $\Lambda_p(\l)$ has no multi-edges 
	if and only if $\sum_{i,j=1}^{n} B_{ij}(\l)^{2} - B_{ij}(\l) =0$. 
		Meanwhile, by the symmetry of the Brandt matrix (since $p \equiv 1 \mod 12$) 
		and parts $(1)$ and $(3)$ of Proposition \ref{GrossProp2.7}, we have 
			\begin{equation*} 
				B_{ii}(\l^{2})=\sum_{j=1}^{n} B_{ij}(\l) B_{ji}(\l) - \l = 1+ \sum_{j=1}^{n} B_{ij}(\l)^{2} - B_{ij}(\l). 
			\end{equation*} 
	Thus, $\text{Tr}(B(\l^{2})) = \sum_{i=1}^{n} B_{ii}(\l^{2}) = n + \sum_{i,j=1}^{n} B_{ij}(\l)^{2} - B_{ij}(\l)$. 
	So $\Lambda_p(\l)$ has no multiple edges if and only if $\text{Tr}(B(\l^{2})) = n$. 
\end{proof}
\begin{rmk}
	Note that our proof also shows that the bigger the quantity $\text{Tr}(B(\l^{2})) -n$, the more we expect to have multi-edges. 
\end{rmk}
	In some cases, one might ask about the number of \emph{redundant edges}. 
	It is defined as the minimal number of edges (including loops) that one has to remove in order to have a graph with no multi-edges. 
	For example, in Figure \ref{HeuristicallyIndp757l35}, the graph $\Lambda_{109}(3)$ has $3$ redundant edges 
	and the graph $\Lambda_{109}(2)$ has no redundant edges. 
	
	As we said, the quantity $\text{Tr}(B(\l^{2})) -n$ tells us about the number of redundant edges. 
	We notice however that each redundant edge does not simply add $1$ to the value of $\text{Tr}(B(\l^{2})) -n$. 
	Actually, if we have two distinct vertices that are precisely joined by $m$ multiple edges (giving us $m-1$ redundant edges) 
	this will contribute $2m(m-1)$ to $\text{Tr}(B(\l^{2}))-n$. 
	Likewise, if we have a vertices having precisely $m$ loops (giving us $m-1$ redundant loops) 
	this will contribute $m(m-1)$ to $\text{Tr}(B(\l^{2}))-n$. 
	So for example, in the graph $\Lambda_{109}(3)$ from Figure \ref{HeuristicallyIndp757l35}, 
	we have $\text{Tr}(B(\l^{2}))-n=8=2+2+4$ where the two double loops each give a factor of $2$ and the double edge gives 
	a factor of $4$.  
	 
	Let $\text{RE}(m) := \sum_{i < j} \mathds{1}_{B_{i,j}(\l)=m} (m-1)$ be the total number of redundant edges 
	between any two distinct vertices with precisely $m$ edges between them. 
	Similarly, define $\text{RE}^{\circ}(m):= \sum_{i} \mathds{1}_{B_{i,i}(\l)=m} (m-1)$ to be the total number of redundant 
	edges on any vertex with precisely $m$ loops. 
	Then, 
		\begin{equation} \label{apr250802pm}
			\text{Tr}(B(\l^{2}))-n = \sum_{i,j=1}^{n} B_{ij}(\l) \big( B_{ij}(\l)-1 \big) 
				= \sum_{m} 2m \cdot \text{RE}(m) + m \cdot \text{RE}^{\circ}(m).
		\end{equation} 
	For $p>2$, since $\Lambda_p(\l)$ has degree $\l+1$ and must be connected, 
	there cannot be more than $\l$ edges between any two vertices. 
	Moreover, in order to have redundant edges, we must at least have $2$ edges between some given vertices. 
	Therefore, we can assume that $2 \le m \le \l$ in equation (\ref{apr250802pm}):
	\begin{equation} \label{apr250804pm}
			\text{Tr}(B(\l^{2}))-n = \sum_{m=2}^{\l} 2m \cdot \text{RE}(m) + m \cdot \text{RE}^{\circ}(m).
	\end{equation} 
	This gives the following result.  
	\begin{lem} \label{apr25757pm}
		The number of redundant edges in an undirected graph $\Lambda_p(\l)$ lies between $(\text{Tr}(B(\l^{2}))-n)/2\l$ 
		and $(\text{Tr}(B(\l^{2}))-n)/2$. 
	\end{lem}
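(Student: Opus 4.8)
The plan is to read off the result directly from the weighted identity (\ref{apr250804pm}) by bounding, uniformly in $m$, the coefficients that attach to the redundancy counts. First I would fix notation: write $T := \text{Tr}(B(\l^{2}))-n$ for the quantity on the left of (\ref{apr250804pm}), and let
\[
	R := \sum_{m=2}^{\l} \big( \text{RE}(m) + \text{RE}^{\circ}(m) \big)
\]
denote the total number of redundant edges in $\Lambda_p(\l)$, counting both redundant multi-edges between distinct vertices and redundant loops. With this notation the claim is exactly $T/2\l \le R \le T/2$, and (\ref{apr250804pm}) reads $T = \sum_{m=2}^{\l} 2m\,\text{RE}(m) + m\,\text{RE}^{\circ}(m)$.

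The key step is the observation that every coefficient appearing in this expansion is squeezed between $2$ and $2\l$. Indeed, the coefficient $2m$ of $\text{RE}(m)$ satisfies $4 \le 2m \le 2\l$ because $2 \le m \le \l$ (the upper bound $m \le \l$ being precisely the bound on the multiplicity of edges established just before the lemma, using that $\Lambda_p(\l)$ is connected of degree $\l+1$), and the coefficient $m$ of $\text{RE}^{\circ}(m)$ satisfies $2 \le m \le \l \le 2\l$. Since $\text{RE}(m)$ and $\text{RE}^{\circ}(m)$ are all non-negative, I would replace each coefficient by the common lower bound $2$ to obtain
\[
	T \ge \sum_{m=2}^{\l} 2\,\text{RE}(m) + 2\,\text{RE}^{\circ}(m) = 2R,
\]
which gives $R \le T/2$; and replace each coefficient by the common upper bound $2\l$ to obtain $T \le 2\l R$, which gives $R \ge T/2\l$. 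Combining the two inequalities yields the stated bracketing.

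There is no serious obstacle here: the argument is a one-line uniform bound on the coefficients once (\ref{apr250804pm}) is in hand. The only point requiring care is bookkeeping, namely being sure that \emph{both} species of redundancy (loops and genuine multi-edges) are included in $R$ with the right weights, and that the admissible range $2 \le m \le \l$ is invoked so that the extreme coefficients are exactly $2$ (attained by a double loop) and $2\l$ (attained by $\l$-fold edges between distinct vertices). These extremes also show the bounds are sharp in principle, which I would note in passing but need not prove.
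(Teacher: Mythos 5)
Your proof is correct and follows essentially the same route as the paper: the lemma is read off from equation (\ref{apr250804pm}) by noting that every coefficient weighting $\text{RE}(m)$ and $\text{RE}^{\circ}(m)$ lies between $2$ and $2\l$, which immediately gives $2R \le \text{Tr}(B(\l^{2}))-n \le 2\l R$. Your explicit bookkeeping (and the remark on which configurations attain the extreme coefficients) just makes the paper's implicit one-line deduction fully precise.
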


	Let us now, as in Section \ref{The_number_of_loops}, 
	find conditions under which supersingular isogeny graphs have no multi-edges. 
	\begin{theorem}
		An undirected graph $\Lambda_p(\l)$ has no multi-edges (hence no redundant edges) 
		if and only if $p$ splits in $\O_{s^{2} - 4\l^{2}}$ for $0<s^{2} < 4\l^{2}$, 
		i.e. if and only if $\big( \frac{s^2 - 4 \l^{2}}{p} \big) =1$ for $0<s^{2} < 4\l^{2}$. 
	\end{theorem}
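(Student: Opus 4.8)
The plan is to combine the trace criterion of Theorem \ref{July180316AM} with the Hurwitz class number expression for the trace in Theorem \ref{TraceViaHCNs}, running the same argument that gave the No-Loop Theorem but now with $m = \l^2$. By Theorem \ref{July180316AM}, the undirected graph $\Lambda_p(\l)$ has no multi-edges if and only if $\text{Tr}(B(\l^2)) = n$, and Theorem \ref{TraceViaHCNs} rewrites this trace as
\[
\text{Tr}(B(\l^2)) = \sum_{\substack{s \in \Z \\ s^2 \le 4\l^2}} H_p(4\l^2 - s^2).
\]
First I would isolate the boundary terms $s = \pm 2\l$, for which $4\l^2 - s^2 = 0$; by Definition \ref{Aug20720p} each contributes $H_p(0) = (p-1)/24$, so together they account for $2\cdot(p-1)/24 = (p-1)/12 = n$. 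These two terms correspond to the scalar endomorphism $[\pm\l]$ and furnish precisely the diagonal contribution $B_{ii}(\l^2) \ge 1$ already built into the criterion.

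Next I would dispose of the term $s = 0$, which is the one delicate point. Here $D = 4\l^2$, and $\O_{-4\l^2}$ is the order of conductor $\l$ inside the maximal order of $\Q(\sqrt{-1})$, whose fundamental discriminant is $-4$. Since we assume throughout this section that $p \equiv 1 \mod 12$, in particular $p \equiv 1 \mod 4$, so that $\big(\tfrac{-4}{p}\big) = \big(\tfrac{-1}{p}\big) = 1$ and $p$ splits in $\Q(\sqrt{-1})$; as $p \ne \l$ the prime $p$ does not divide the conductor $\l$, hence $p$ splits in $\O_{-4\l^2}$ as well. By the definition of the modified Hurwitz class number this forces $H_p(4\l^2) = 0$. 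This is exactly why no congruence condition at $s = 0$ appears in the statement: it is automatically satisfied under the standing hypothesis $p \equiv 1 \mod 12$. After removing the terms $s = \pm 2\l$ and $s = 0$ we are left with
\[
\text{Tr}(B(\l^2)) - n = \sum_{0 < s^2 < 4\l^2} H_p(4\l^2 - s^2).
\]

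Finally I would run the non-negativity argument. For $0 < s^2 < 4\l^2$ we have $4\l^2 - s^2 > 0$ and $4\l^2 - s^2 \equiv 0,3 \mod 4$, so $H(4\l^2 - s^2) > 0$ and every summand $H_p(4\l^2 - s^2)$ is nonnegative. Consequently the sum vanishes if and only if every summand vanishes, so $\text{Tr}(B(\l^2)) = n$ if and only if $H_p(4\l^2 - s^2) = 0$ for all $s$ with $0 < s^2 < 4\l^2$. Invoking the characterization recorded after Definition \ref{Aug20720p}, namely that for $D \equiv 0,3 \mod 4$ one has $H_p(D) = 0$ exactly when $p$ splits in $\O_{-D}$, this is equivalent to $p$ splitting in $\O_{s^2 - 4\l^2}$ for every $0 < s^2 < 4\l^2$, which in Legendre-symbol form reads $\big(\tfrac{s^2 - 4\l^2}{p}\big) = 1$. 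The main obstacle is precisely the bookkeeping of the $s = 0$ term: one must recognize that it is \emph{not} a scalar contribution yet is nevertheless annihilated by the hypothesis $p \equiv 1 \mod 12$, which is what reconciles the trace computation with the clean range $0 < s^2 < 4\l^2$ appearing in the statement.
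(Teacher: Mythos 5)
Your proposal is correct and takes essentially the same route as the paper's proof: combine the trace criterion of Theorem \ref{July180316AM} with Theorem \ref{TraceViaHCNs}, split off the $s=\pm 2\l$ terms (which give $2H_p(0)=n$) and the $s=0$ term, and conclude by non-negativity of the remaining $H_p(4\l^2-s^2)$ together with the splitting characterization following Definition \ref{Aug20720p}. The only divergence is the justification that $H_p(4\l^2)=0$: the paper asserts that $p$ is ``always split'' in $\O_{-4\l^2}$ because $4\l^2$ is a square, whereas you deduce the splitting from the standing hypothesis $p\equiv 1 \bmod 12$ (so $\big(\tfrac{-1}{p}\big)=1$, and $p\neq\l$ means $p$ does not divide the conductor), which is in fact the more accurate argument, since for $p\equiv 3 \bmod 4$ the prime $p$ would be inert in $\O_{-4\l^2}$ rather than split.
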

	\begin{proof}
		Theorem \ref{TraceViaHCNs} allows us to express the trace of $B(\l^{2})$ as a sum of Hurwits class numbers. 
		Note as well that $2  H_p(0)=(p-1)/12 =n$ 
		and that $H_p(4 \l^{2})=0$ since $p$ is always split in $\O_{-4\l^{2}}$ (because $4 \l^{2}$ is a square). 
		We can thus write 
			\begin{equation} \label{Aug030201a}
				\text{Tr}(B(\l^{2})) 	= \sum_{\substack{s \in \Z \\ s^2 \le 4 \l^{2}}} H_p (4 \ell^2 - s^2) 
								= n + \sum_{0 < s^{2} < 4 \l^{2}} H_p (4 \ell^2 - s^2). 
			\end{equation} 
		So by Theorem \ref{July180316AM}, $\Lambda_p(\l)$ has no multi-edges edges 
		if and only if $\sum_{0 < s^{2} < 4 \l^{2}} H_p (4 \ell^2 - s^2)=0$. 
		And by Definition \ref{Aug20720p}, this happens precisely when $p$ splits in $\O_{s^{2} - 4\l^{2}}$ for $s = 1,... ,2\l-1$. 
	\end{proof}

Just like in the previous section, one can find congruence conditions on $p$ 
that ensure the graph $\Lambda_p(\l)$ has no multi-edges. 
\begin{eg}\label{exampleEdgesl=2}
	Let $\l =2$ and assume that $p \equiv 1 \mod 12$. 
	Then, the graph $\Lambda_p(2)$ has no multiple edges if and only if $p$ splits in $\O_{s^2-16}$ for $s = 1,2,3$. 
	We thus want 
		\[1= \Big(\frac{-15}{p} \Big) = \Big(\frac{-12}{p} \Big) =\Big(\frac{-7}{p} \Big). \]
	Therefore, the graph $\Lambda_p(2)$ is undirected and has no multiple edges 
	if and only if $p \equiv 1, 109, 121, 169, 289, 361$ mod $420$. 
	See for instance $\Lambda_{109}(2)$ in Figure \ref{HeuristicallyIndp757l35}. 
	\end{eg}
\begin{eg} \label{apr25412p}
	Let $\l =3$ and assume that $p \equiv 1 \mod 12$. 
	Then, $\Lambda_p(3)$ has no multiple edges if and only if $p$ splits in $\O_{s^2-36}$ for $s = 1,...,5$. 
	We thus want 
		\begin{equation*} \label{ConditionsEgl=3ME}
			1= \Big(\frac{-35}{p} \Big) = \Big(\frac{-32}{p} \Big) =\Big(\frac{-27}{p} \Big)=\Big(\frac{-20}{p} \Big)=\Big(\frac{-11}{p} \Big).
		\end{equation*} 	
	So the graph $\Lambda_p(3)$ is undirected and has no multiple edges if and only if 
	$p \equiv$ 1, 169, 289, 361, 529, 841, 961, 1369, 1681, 1849, 2209, 2641, 2689, 2809, 
	3481,3529, 3721, 4321, 4489, 5041, 5329, 5569, 6169, 6241, 6889, 7561, 7681, 7921, 8089, 8761 mod $9240$. 
\end{eg}
	Putting together examples \ref{exampleLoopsl=2}, \ref{exampleLoopsl=3}, \ref{exampleEdgesl=2} and \ref{apr25412p}, 
	we obtain the following theorem. 
\begin{theorem} \label{apr25352p}
	The isogeny graph $\Lambda_p(2)$ is {\emph{undirected}} and \emph{simple} 
	if and only if $p \equiv$ 1, 121, 169, 289, 361, 529 $\mod 840$. 
	\\
	Likewise, $\Lambda_p(3)$ is \emph{undirected} and \emph{simple} if and only if $p \equiv $ 
	1, 169, 289, 361, 529, 841, 961, 1369, 1681, 1849, 2209, 2641, 2689, 2809, 
	3481,3529, 3721, 4321, 4489, 5041, 5329, 5569, 6169, 6241, 6889, 7561, 7681, 7921, 8089, 8761$\mod 9240$.
\end{theorem}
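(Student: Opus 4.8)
The plan is to recognize that the statement ``$\Lambda_p(\l)$ is undirected and simple'' is the conjunction of conditions already characterized earlier in this section: that the graph be undirected, that it have no loops, and that it have no multi-edges. Since every characterization produced so far is a union of congruence classes modulo some integer, the entire proof reduces to intersecting these classes via the Chinese Remainder Theorem. Concretely, Theorem \ref{apr24259p} already packages ``undirected with no loops'' as a congruence condition, and the no-multi-edge criterion was worked out in Examples \ref{exampleEdgesl=2} and \ref{apr25412p}; so I only need to combine ``undirected and loop-free'' with ``multi-edge-free.''

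For $\l = 2$, Theorem \ref{apr24259p} gives $p \equiv 1, 25, 121 \mod 168$ and Example \ref{exampleEdgesl=2} gives $p \equiv 1, 109, 121, 169, 289, 361 \mod 420$. The two moduli are $168 = 2^3 \cdot 3 \cdot 7$ and $420 = 2^2 \cdot 3 \cdot 5 \cdot 7$, so their least common multiple is $840$, and the CRT says the combined condition is a union of classes modulo $840$. I would enumerate the finitely many residues modulo $840$ whose reductions form an admissible pair of classes mod $168$ and mod $420$; this yields exactly $p \equiv 1, 121, 169, 289, 361, 529 \mod 840$, the claimed answer.

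For $\l = 3$ the same recipe applies, now intersecting $p \equiv 1, 25, 49, 97, 169 \mod 264$ (from Example \ref{exampleLoopsl=3}, equivalently Theorem \ref{apr24259p}) with the thirty residues modulo $9240$ of Example \ref{apr25412p}. Here $264 = 2^3 \cdot 3 \cdot 11$ divides $9240 = 2^3 \cdot 3 \cdot 5 \cdot 7 \cdot 11$, so the combined modulus is simply $9240$ and the intersection is a sub-union of the thirty multi-edge-free classes. In fact I expect the no-multi-edge condition to already force the no-loop condition: translating via quadratic reciprocity, loop-freeness for $\l=3$ requires $p$ split in $\O_{-3}, \O_{-8}, \O_{-11}$, while multi-edge-freeness requires $p$ split in $\O_{-35}, \O_{-32}, \O_{-27}, \O_{-20}, \O_{-11}$; since $\O_{-27}$ and $\O_{-32}$ have the same splitting behaviour as $\O_{-3}$ and $\O_{-8}$ respectively (the extra conductor being coprime to $p$), the latter list contains the former. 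This would explain why the $\l=3$ answer coincides verbatim with Example \ref{apr25412p}.

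The only real work is the bookkeeping of the CRT intersection, and the point that needs care is confirming --- rather than assuming --- the nesting of conditions: for $\l=2$ the loop and multi-edge conditions are genuinely independent (neither implies the other, since loop-freeness needs $\big(\frac{-1}{p}\big)=1$, which is not forced by the multi-edge list, while multi-edge-freeness needs $\big(\frac{-3}{p}\big)=1$, which is absent from the loop list), so the modulus strictly grows to $840$ and the class count drops to six, whereas for $\l=3$ the nesting collapses the intersection back onto the thirty classes. I would cross-check the final lists by reducing each claimed residue modulo both original moduli and verifying membership, and optionally by recomputing the admissible Legendre-symbol tuples directly, which guards against arithmetic slips in the enumeration.
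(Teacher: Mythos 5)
Your proposal is correct and takes essentially the same approach as the paper, which obtains the theorem precisely by intersecting the congruence conditions of Theorem \ref{apr24259p} (equivalently Examples \ref{exampleLoopsl=2} and \ref{exampleLoopsl=3}) with those of Examples \ref{exampleEdgesl=2} and \ref{apr25412p}; your observation that for $\l=3$ the multi-edge conditions subsume the loop conditions is exactly the corollary the paper states immediately after this theorem. One minor inaccuracy in your independence remark for $\l=2$: under the standing assumption $p \equiv 1 \bmod 12$ both $\big(\tfrac{-1}{p}\big)=1$ and $\big(\tfrac{-3}{p}\big)=1$ are automatic, so the genuine independence comes from $\big(\tfrac{-8}{p}\big)=1$ (a condition mod $8$) versus $\big(\tfrac{-15}{p}\big)=1$ (a condition mod $5$) --- but this does not affect your CRT enumeration, which yields the correct six classes mod $840$.
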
 
For example, $1009$ is the smallest prime such that the graph $\Lambda_{1009}(2)$ has no loops nor multi-edges 
(see Figure \ref{IsogGraphp1009l2}).
In addition, comparing Theorem \ref{apr24259p} and examples \ref{exampleEdgesl=2} and \ref{apr25412p} gives the following 
slightly surprising corollary. 
\begin{cor}
	Let $p$ be a prime such that $\Lambda_p(3)$ is undirected and has no multi-edges. 
	Then $\Lambda_p(3)$ also has no loops (and is hence simple), 
	and $\Lambda_p(2)$ also must be simple.   
\end{cor}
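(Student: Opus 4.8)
The plan is to bypass the explicit residue lists entirely and argue at the level of the quadratic splitting conditions supplied by the No-Loop Theorem (Theorem~\ref{NoLoopsThm}) and its multi-edge analogue. Recall that for $D<p$ the condition ``$p$ splits in $\O_{-D}$'' is just $\left( \frac{-D}{p} \right) = 1$, and that the Legendre symbol is multiplicative and blind to square factors, so the whole statement collapses to a short character computation. Throughout I use that the hypotheses force $p \equiv 1 \bmod 12$, hence in particular $p \equiv 1 \bmod 4$ and $\left( \frac{-1}{p} \right) = 1$.

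First I would unwind the hypothesis. By Example~\ref{apr25412p}, ``$\Lambda_p(3)$ undirected with no multi-edges'' means $p$ splits in $\O_{s^2 - 36}$ for $s = 1, \dots, 5$, that is $\left( \frac{-35}{p} \right) = \left( \frac{-32}{p} \right) = \left( \frac{-27}{p} \right) = \left( \frac{-20}{p} \right) = \left( \frac{-11}{p} \right) = 1$. Stripping square factors gives the cleaner facts $\left( \frac{-2}{p} \right) = 1$ (from $-32 = -2\cdot 16$), $\left( \frac{-3}{p} \right) = 1$ (from $-27 = -3\cdot 9$), $\left( \frac{-11}{p} \right) = 1$, together with $\left( \frac{-1}{p}\right)\left( \frac{5}{p}\right) = 1$ (from $-20 = -4\cdot 5$) and $\left( \frac{-1}{p}\right)\left( \frac{5}{p}\right)\left( \frac{7}{p}\right) = 1$ (from $-35 = -5\cdot 7$). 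Since $\left( \frac{-1}{p}\right) = 1$, the last two yield $\left( \frac{5}{p} \right) = 1$ and hence $\left( \frac{7}{p} \right) = 1$.

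Next I would read off the two conclusions. For the no-loops claim for $\Lambda_p(3)$, the No-Loop Theorem (cf. Example~\ref{exampleLoopsl=3}) requires $p$ split in $\O_{-12}, \O_{-11}, \O_{-8}, \O_{-3}$, i.e. $\left( \frac{-3}{p}\right) = \left( \frac{-11}{p}\right) = \left( \frac{-2}{p}\right) = 1$; all three are already in hand, so $\Lambda_p(3)$ is simple. For the $\Lambda_p(2)$ claim, simplicity requires (Theorem~\ref{apr25352p} via its two ingredients, Examples~\ref{exampleLoopsl=2} and~\ref{exampleEdgesl=2}) splitting in $\O_{-8}, \O_{-7}, \O_{-4}$ for no loops and in $\O_{-15}, \O_{-12}, \O_{-7}$ for no multi-edges. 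After removing squares these read $\left( \frac{-2}{p}\right) = 1$, $\left( \frac{-1}{p}\right) = 1$, $\left( \frac{-3}{p}\right) = 1$, $\left( \frac{-7}{p}\right) = \left( \frac{-1}{p}\right)\left( \frac{7}{p}\right) = 1$, and $\left( \frac{-15}{p}\right) = \left( \frac{-3}{p}\right)\left( \frac{5}{p}\right) = 1$, each of which follows from the collected facts $\left( \frac{-1}{p}\right) = \left( \frac{-2}{p}\right) = \left( \frac{-3}{p}\right) = \left( \frac{5}{p}\right) = \left( \frac{7}{p}\right) = 1$.

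There is no real obstacle here: the argument is a finite bookkeeping exercise, and the only point demanding care is confirming that the two ``new'' characters $\left( \frac{5}{p}\right)$ and $\left( \frac{7}{p}\right)$ are genuinely extractable — which is exactly where the undirectedness hypothesis $\left( \frac{-1}{p}\right) = 1$ is used, to decouple the conditions coming from $-20$ and $-35$. As a cross-check that leaves no room for a transcription error, I would also verify that the displayed list of residues mod $9240$ in Example~\ref{apr25412p} reduces, class by class, into the list mod $840$ of Theorem~\ref{apr25352p} and into $\{1,25,49,97,169\} \bmod 264$ of Theorem~\ref{apr24259p}, using $840 \mid 9240$ and $264 \mid 9240$; this reproduces the conclusion purely arithmetically, independently of the character manipulation.
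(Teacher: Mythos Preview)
Your proof is correct and matches the paper's approach in substance: the paper simply says that the corollary follows by ``comparing Theorem~\ref{apr24259p} and Examples~\ref{exampleEdgesl=2} and~\ref{apr25412p}'', i.e.\ by checking that the residue classes mod $9240$ for $\Lambda_p(3)$ undirected with no multi-edges land inside the relevant lists mod $264$ and mod $840$. You carry out the same comparison one level up, at the Legendre-symbol conditions, which makes the implication transparent (and explains \emph{why} the residue lists nest) without relying on the precomputed tables; your cross-check at the end is exactly the paper's own argument.
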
 
	{In particular, $2689$ is the first prime such that the supersingular $2$-isogeny and $3$-isogeny graphs both are simple.} 
	\\ 
	
	Finally, we conclude this section by giving bounds for the number of redundant edges. 
	Recall that $H(0)= -1/12$ and that $H_p(D) \le H(D)$ for all $D>0$. So, 
	by equation (\ref{Aug030201a}) and Theorem \ref{TheoremOnMoregeneralSumofnonmodifiedHCNs}, 
		\[ \text{Tr}(B(\l^{2})) -n \le \sum_{0 < s^2 < 4\l^{2}} H(4 \ell^2 - s^2) = \l+2\l^{2}- H(4 \l^{2}) +\frac{1}{6}. \] 
	But $H(4 \l^{2}) = \frac{1}{2} + h(-4\l^{2})$ 
	and 
		\begin{equation*} 
				h(-4\l^{2}) = 
									\left\{	\begin{array}{ll}
										{\l}/{2}  & \mbox{if } \l =2;  \\
										{\l}/{2} - 1/2  & \mbox{if } \l \equiv 1 \mod 4 ; \\
										{\l}/{2} + 1/2  & \mbox{if } \l \equiv 3 \mod 4. 
									\end{array}
									\right.
		\end{equation*} 
	Thus, 
		\begin{equation} \label{jan171050p}
			2\l^{2} + \frac{\l}{2} - \frac{5}{6} \le \sum_{0 < s^2 < 4\l^{2}} H(4 \ell^2 - s^2) \le 2\l^{2} + \frac{\l}{2} + \frac{1}{6}. 
		\end{equation} 
	We can then use Lemma \ref{apr25757pm} to obtain the bound $\lfloor \l^{2} + \frac{\l}{4} + \frac{1}{12} \rfloor$ 
	on the number of redundant edges in $\Lambda_p(\l)$. 
		\begin{cor} \label{ResultOnNumberOfRepeatedEdges}
			The graph $\Lambda_p(\l)$ has at most $\l^{2} + \frac{\l}{4}$ redundant edges. 
		\end{cor}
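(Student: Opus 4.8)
The plan is to derive the bound purely from the already-established interval for the number of redundant edges, together with a closed-form evaluation of a sum of ordinary Hurwitz class numbers. The starting point is Lemma \ref{apr25757pm}, whose upper endpoint states that the number of redundant edges in $\Lambda_p(\l)$ is at most $(\text{Tr}(B(\l^2))-n)/2$. Thus it suffices to produce a good upper bound for the trace defect $\text{Tr}(B(\l^2))-n$ and then round carefully.

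First I would bound the trace defect by a sum of \emph{unmodified} class numbers. By equation (\ref{Aug030201a}) we have $\text{Tr}(B(\l^2))-n = \sum_{0<s^2<4\l^2} H_p(4\l^2-s^2)$, and since $H_p(D)\le H(D)$ for every $D>0$, this is at most $\sum_{0<s^2<4\l^2} H(4\l^2-s^2)$. Next I would evaluate the complete sum $\sum_{s^2\le 4\l^2} H(4\l^2-s^2)$ using Theorem \ref{TheoremOnMoregeneralSumofnonmodifiedHCNs} with $m=\l^2$. The divisors of $\l^2$ are $1,\l,\l^2$, so the right-hand side equals $2(1+\l+\l^2)-(1+\l+1)=2\l^2+\l$. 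Separating the boundary contributions $s=0$ (which gives $H(4\l^2)$) and $s=\pm 2\l$ (each giving $H(0)=-1/12$) then expresses the restricted sum as $2\l^2+\l-H(4\l^2)+1/6$. Substituting $H(4\l^2)=\tfrac12+h(-4\l^2)$ and the displayed values of $h(-4\l^2)$ produces the bracketing (\ref{jan171050p}), whose upper end is $2\l^2+\tfrac{\l}{2}+\tfrac16$.

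Combining these steps, the number of redundant edges is at most $(\text{Tr}(B(\l^2))-n)/2 \le \tfrac12\bigl(2\l^2+\tfrac{\l}{2}+\tfrac16\bigr)=\l^2+\tfrac{\l}{4}+\tfrac1{12}$. As this count is a non-negative integer, it cannot exceed $\lfloor \l^2+\tfrac{\l}{4}+\tfrac1{12}\rfloor$. The last — and really the only delicate — step is to observe that this floor equals $\lfloor \l^2+\tfrac{\l}{4}\rfloor$: since $\l$ is prime, $\l^2+\tfrac{\l}{4}$ is a quarter-integer whose fractional part is $\tfrac12$, $\tfrac14$ or $\tfrac34$ according to $\l=2$, $\l\equiv 1$ or $\l\equiv 3\pmod 4$, and in each of these cases adding $\tfrac1{12}$ does not reach the next integer; hence the count is at most $\lfloor \l^2+\tfrac{\l}{4}\rfloor\le \l^2+\tfrac{\l}{4}$, which is the claim.

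I expect this rounding bookkeeping to be the main (if modest) obstacle, since everything upstream is an exact computation: the only two places where care is genuinely needed are using the \emph{upper} endpoint of Lemma \ref{apr25757pm} (rather than the lower one $(\text{Tr}(B(\l^2))-n)/2\l$) and tracking the two $-1/12$ boundary terms correctly when passing from the full sum $\sum_{s^2\le 4\l^2}$ to the restricted sum $\sum_{0<s^2<4\l^2}$.
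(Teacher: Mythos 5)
Your proposal is correct and follows essentially the same route as the paper: bounding $\text{Tr}(B(\l^2))-n$ by $\sum_{0<s^2<4\l^2}H(4\l^2-s^2)$, evaluating this via Theorem \ref{TheoremOnMoregeneralSumofnonmodifiedHCNs} with the boundary terms $H(4\l^2)$ and $2H(0)$ separated, and then applying the upper endpoint of Lemma \ref{apr25757pm}. Your explicit rounding argument at the end in fact makes precise a step the paper passes over quickly (it only records the bound $\lfloor \l^2+\tfrac{\l}{4}+\tfrac{1}{12}\rfloor$ before stating the corollary), and your case check on the fractional part of $\l/4$ is the right justification.
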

	In particular, for all $p$, the graph $\Lambda_p(2)$ cannot have 
	more than $4$ redundant edges 
	and $\Lambda_p(3)$ cannot have more than $9$ redundant edges. 
	\begin{rmk} \label{Jan180109p}
		As in Remark \ref{Jan170354p}, 
		we expect the quantity $\text{Tr}(B(\l^{2}))-n$ to be $\frac{1}{2} \sum_{0 < s^2 < 4\l^{2}} H(4 \ell^2 - s^2)$, 
		which lies between $\l^{2} + \frac{\l}{4} - \frac{5}{12}$ and $\l^{2} + \frac{\l}{4} + \frac{1}{12}$, by (\ref{jan171050p}). 
		So by Lemma \ref{apr25757pm}, 
		the expected number of redundant edges in $\Lambda_p(\l)$ lies between $\l/2$ and $(\l + 1/4)\l/2$. 
	\end{rmk}

	Corollaries \ref{ThmOnNumbOfLoops} and \ref{ResultOnNumberOfRepeatedEdges} 
	show that the supersingular isogeny graphs $\Lambda_p(\l)$ are very close to being simple, 
	as they involve a very small number of loops and redundant edges. 
	This small number of loops and redundant edges can be seen as negligible, 
	especially as the graphs that we are dealing with, in practice, are of cryptographic size.

\section{Simultaneous study of two graphs} \label{SectionSimultaneousstudyoftwographs}
		
\subsection{The intersection of two graphs}  \label{Theintersectionoftwographs}
	The \emph{intersection number} of two undirected\footnote{In the case of directed graphs, 
	we would let the sum in equation (\ref{Aug040449p}) run over all $i,j$ in $\{1,...,n\}$.} 
	graphs is the number of edges they have in common.
	It is given by 
		\begin{equation} \label{Aug040449p}
			\# \big(E(\Lambda_p(\l_1)) \cap E(\Lambda_p(\l_2)) \big) := \sum_{i \le j} \min \{ B_{ij}(\l_1) ,B_{ij}(\l_2)\}. 
		\end{equation} 
	It is related to the \emph{edit distance}, a well-known graph distance.
	By using similar methods to the previous sections, we will study 
	the number of edges that two supersingular isogeny graphs share. 
	Let $\{E_u : u=1,...,n\}$ be a set of representatives for the isomorphism classes of \ss \ elliptic curves over $\ol{\F}_{p}$. 
		Given two fixed elliptic curves $E_i,E_j$, let $\text{Hom}_{m}(E_i,E_j)$ denote the isogenies 
		of degree $m$ from $E_i$ to $E_j$. 
		Moreover, let 
			\[C_{ij}(m) := \{ f  \in \text{Hom}_{m}(E_i,E_j)  : 	\ker(f) \text{ is cyclic} \}/(\text{Aut}(E_j)), \] 
		where isogenies have cyclic kernels and are considered up to automorphisms of the image. 
		Then, $\#C_{ij}(m) = B_{ij}(m) - \# \{$isogenies from $E_i$ to $E_j$ with backtracking$\}$. 
	\begin{rmk} \label{Aug070918p}
		Since $p \equiv 1 \mod 12$, the only possible automorphisms are $\{\pm \text{Id}\}$. 
		Hence, in $C_{ij}(m)$, $f \sim g$ if and only if $f=\pm g$. 
		Moreover, it is known that isogenies are determined by their kernel up to sign. 
		Hence,  $f \sim g$ if and only if $\ker(f) = \ker(g)$.
	\end{rmk}

	\begin{theorem} 
		\label{NoEdgesinCommonTheorem}
			The graphs $\Lambda_p(\l_1)$ and $\Lambda_p(\l_2)$ have no edges in common  
			if and only if $p$ splits in $\O_{s^2 - 4 \l_1\l_2}$ for all $s^2 < 4 \l_1\l_2$, 
			i.e. if and only if $\big( \frac{s^2 - 4 \l_1\l_2}{p} \big) =1$ for all $s^2 < 4 \l_1\l_2$. 
	\end{theorem}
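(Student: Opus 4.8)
The plan is to mirror the strategy already used for loops and multi-edges: first collapse the vanishing of the intersection number into a single trace condition on a Brandt matrix, and then convert that trace condition, via Gross's formula, into a splitting condition phrased through modified Hurwitz class numbers.

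First I would express the diagonal of $B(\l_1\l_2)$ through the two adjacency matrices. Since $\l_1\ne\l_2$ are distinct primes we have $\gcd(\l_1,\l_2)=1$, so part (2) of Proposition \ref{GrossProp2.7} gives $B(\l_1\l_2)=B(\l_1)B(\l_2)$. Reading off the $(i,i)$ entry and using the symmetry relation (part (4)) together with $w_i=1$ for all $i$ (valid because $p\equiv 1 \mod 12$), one obtains
\[
B_{ii}(\l_1\l_2)=\sum_{j=1}^{n} B_{ij}(\l_1)B_{ji}(\l_2)=\sum_{j=1}^{n} B_{ij}(\l_1)B_{ij}(\l_2),
\]
and hence $\text{Tr}(B(\l_1\l_2))=\sum_{i,j} B_{ij}(\l_1)B_{ij}(\l_2)$. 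Each summand is a nonnegative integer, and $B_{ij}(\l_1)B_{ij}(\l_2)\ge 1$ exactly when $E_i$ and $E_j$ are joined both by a $\l_1$-edge and by a $\l_2$-edge, i.e. when they share an edge (the diagonal $j=i$ accounting for shared loops). Comparing with the intersection number in equation (\ref{Aug040449p}), which vanishes precisely when every $\min\{B_{ij}(\l_1),B_{ij}(\l_2)\}$ is zero, I conclude that the two graphs have no common edge if and only if $\text{Tr}(B(\l_1\l_2))=0$. Geometrically, a shared edge realized by $f:E_i\to E_j$ of degree $\l_1$ and $g:E_i\to E_j$ of degree $\l_2$ produces the non-scalar endomorphism $g^{\vee}\circ f$ of degree $\l_1\l_2$, exactly as in Figure \ref{ProofOfNoMultiEdges}, and by Remark \ref{Aug070918p} such isogenies are counted correctly up to $\{\pm\text{Id}\}$.

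Next I would apply Gross's trace formula (Theorem \ref{TraceViaHCNs}) to write
\[
\text{Tr}(B(\l_1\l_2))=\sum_{\substack{s\in\Z\\ s^2\le 4\l_1\l_2}} H_p(4\l_1\l_2-s^2).
\]
The key arithmetic observation is that $\l_1\l_2$ is not a perfect square, so $4\l_1\l_2\ne s^2$ for every $s$; consequently the exceptional term $H_p(0)$ never appears, every argument $4\l_1\l_2-s^2$ is strictly positive, and the index set shrinks to $s^2<4\l_1\l_2$. Moreover $4\l_1\l_2-s^2\equiv 0,3 \mod 4$, so $H(4\l_1\l_2-s^2)>0$ for each such $s$. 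Since all terms are nonnegative, $\text{Tr}(B(\l_1\l_2))=0$ holds if and only if $H_p(4\l_1\l_2-s^2)=0$ for every $s$ with $s^2<4\l_1\l_2$; and by the remark following Definition \ref{Aug20720p}, for these discriminants this is equivalent to $p$ splitting in $\O_{s^2-4\l_1\l_2}$, i.e. to $\big(\frac{s^2-4\l_1\l_2}{p}\big)=1$. Combined with the reduction above, this yields the claimed equivalence.

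The step I expect to require the most care is the first reduction. The subtlety is to verify that the identity $\text{Tr}(B(\l_1\l_2))=\sum_{i,j}B_{ij}(\l_1)B_{ij}(\l_2)$ matches shared edges one-for-one rather than merely shared \emph{pairs of vertices}: I must confirm that $B_{ij}(\l_1)B_{ij}(\l_2)\ge 1$ is genuinely equivalent to $\min\{B_{ij}(\l_1),B_{ij}(\l_2)\}\ge 1$ (which is immediate for nonnegative integers) and that summing over all $i,j$ versus over $i\le j$ does not alter the vanishing criterion. Once the exact equivalence $\text{Tr}(B(\l_1\l_2))=0 \iff$ empty intersection is secured, the remaining two steps are direct invocations of Theorem \ref{TraceViaHCNs} and the properties of $H_p$ recorded after Definition \ref{Aug20720p}.
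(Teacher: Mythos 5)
Your proof is correct, and it reaches the pivotal trace criterion $\text{Tr}(B(\l_1\l_2))=0$ by a genuinely different mechanism than the paper. The paper's proof is isogeny-theoretic: a common edge gives a pair $f\in C_{ij}(\l_1)$, $g\in C_{ij}(\l_2)$, whose composition $g^{\vee}\circ f$ is a degree-$\l_1\l_2$ endomorphism of $E_i$; conversely any isogeny in $C_{ii}(\l_1\l_2)$ factors into a degree-$\l_1$ and a degree-$\l_2$ isogeny, and since $\l_1\l_2$ is square-free there is no backtracking, so $\#C_{ii}(\l_1\l_2)=B_{ii}(\l_1\l_2)$ and the absence of common edges is equivalent to $\text{Tr}(B(\l_1\l_2))=0$. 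You instead get there algebraically, from the multiplicativity $B(\l_1\l_2)=B(\l_1)B(\l_2)$ (Proposition \ref{GrossProp2.7}(2), valid as $(\l_1,\l_2)=1$) together with symmetry of the Brandt matrices (Proposition \ref{GrossProp2.7}(4) with all $w_i=1$ because $p\equiv 1 \bmod 12$), which yields $\text{Tr}(B(\l_1\l_2))=\sum_{i,j}B_{ij}(\l_1)B_{ij}(\l_2)$; the vanishing of this sum of nonnegative integers is visibly equivalent to the vanishing of the intersection number. Notably, this identity is one the paper also establishes, but only later and geometrically, via the bijection $\Phi(f,g)=g^{\vee}\circ f$ of Lemma \ref{NoNameThm}, where it feeds into Theorem \ref{BoundOnCommonEdges}. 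So your route is the more elementary and self-contained one for this particular theorem (pure matrix algebra on quoted properties of Brandt matrices), while the paper's composition/factorization argument is the one that scales up: it is exactly the idea needed when the degrees are prime powers $\l_1^{a_1}\l_2^{a_2}$ and backtracking must be controlled, where multiplicativity of Brandt matrices alone no longer isolates the cyclic-kernel counts. The second half of your argument --- Gross's formula (Theorem \ref{TraceViaHCNs}), the observation that $\l_1\l_2$ is not a perfect square so the $H_p(0)$ term never occurs and all arguments are positive, and the passage from $H_p(4\l_1\l_2-s^2)=0$ to splitting of $p$ in $\O_{s^2-4\l_1\l_2}$ --- coincides with the paper's.
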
 
	\begin{proof}
		Common edges $f \in C_{ij}(\l_1)$ and $g \in C_{ij}(\l_2)$ guarantee the existence 
		of an endomorphism $g^{\vee} \circ f$ of $E_i$ of degree $\l_1 \l_2$. 
		Since $\l_1 \l_2$ is square-free, $g^{\vee} \circ f$ has no backtracking and is in $C_{ii}(\l_1\l_2)$. 
		Conversely, an isogeny in $C_{ii}(\l_1\l_2)$ must factor as the composition of two isogenies of degrees $\l_1$ and $\l_2$, 
		giving us a pair of common edges in $\Lambda_p(\l_1) \cap \Lambda_p(\l_2)$. 
		Thus, $\Lambda_p(\l_1)$ and $\Lambda_p(\l_2)$ have no common edges if and only if $C_{ii}(\l_1 \l_2) =0$ for all $i$. 
		Note that $\#C_{ii}(\l_1\l_2) = B_{ii}(\l_1\l_2)$ since isogenies of degree $\l_1 \l_2$ cannot have backtracking.  
		So $\Lambda_p(\l_1)$ and $\Lambda_p(\l_2)$ have no common edges 
		if and only if $\text{Tr}(B(\l_1\l_2)) = 0$. 
		Finally, by Theorem \ref{TraceViaHCNs}, we express the trace of $B(\l_1\l_2)$ 
		as $\text{Tr}(B(\l_1\l_2)) = \sum_{s^{2} \le 4 \l_1\l_2} H_p(4\l_1\l_2 - s^{2})$. 
		
		Thus, by Definition \ref{Aug20720p}, $\# \big(E(\Lambda_p(\l_1)) \cap E(\Lambda_p(\l_2)) \big)=0$ if and only if 
		the prime $p$ splits in $\O_{s^2 - 4 \l_1\l_2}$ for all $s^2 < 4 \l_1\l_2$. 
		This happens if and only if $\big( \frac{s^2 - 4 \l_1\l_2}{p} \big) =1$ 
		for all $s^2 < 4 \l_1\l_2$. 
	\end{proof}

	\begin{eg}
	Let $\l_1=2$, $\l_2=3$. 
	Then $\Lambda_p(2)$ and $\Lambda_p(3)$ have no common edges 
	if and only if 
	\begin{equation*} \label{May061147}
		1=\Big( \frac{3}{p} \Big)  = \Big( \frac{-5}{p} \Big)=\Big( \frac{-8}{p} \Big)=\Big( \frac{-23}{p} \Big), 
	\end{equation*} 
	i.e. if and only if $p \equiv $ 1, 49, 121, 169, 289, 361, 409, 601, 721, 841, 961, 1129, 1369, 1681, 1729, 1849, 
	1921, 2209, 2281, 2329, 2401, 2569 mod $2760$. 
	\end{eg}

	\begin{theorem} \label{BoundOnCommonEdges}
		Let $p,\l_1,\l_2$ be primes such that $p \equiv 1 \mod 12$. 
		The two supersingular isogeny graphs $\Lambda_p(\l_1)$ and $\Lambda_p(\l_2)$  
		have at most $\l_2\l_1 + \l_2+\l_1$ edges in common.  
	\end{theorem}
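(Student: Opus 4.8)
The plan is to reduce the intersection number to the trace of $B(\l_1\l_2)$ and then to bound that trace via sums of Hurwitz class numbers, exactly as in Sections \ref{The_number_of_loops} and \ref{TheNumberOfMultiEdges}. Throughout I would assume $\l_1 \neq \l_2$ (so $\gcd(\l_1,\l_2)=1$ and Proposition \ref{GrossProp2.7}(2) applies); since $p \equiv 1 \bmod 12$, every $w_i = 1$ and each $B(\l)$ is symmetric, which is what makes the bookkeeping below work.

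First I would symmetrize the undirected sum in (\ref{Aug040449p}). Setting $S := \sum_{i,j} \min\{B_{ij}(\l_1), B_{ij}(\l_2)\}$ over all ordered pairs and using that the summand is symmetric in $i,j$, one obtains
$$\# \big(E(\Lambda_p(\l_1)) \cap E(\Lambda_p(\l_2))\big) = \tfrac{1}{2} S + \tfrac{1}{2}\sum_{i} \min\{B_{ii}(\l_1), B_{ii}(\l_2)\}.$$
For the off-diagonal bulk I would apply the elementary inequality $\min\{a,b\} \le ab$ for non-negative integers, together with the symmetry $B_{ij}(\l_2) = B_{ji}(\l_2)$ and Proposition \ref{GrossProp2.7}(2), to get
$$S \le \sum_{i,j} B_{ij}(\l_1) B_{ij}(\l_2) = \sum_{i,j} B_{ij}(\l_1) B_{ji}(\l_2) = \text{Tr}\big(B(\l_1)B(\l_2)\big) = \text{Tr}\big(B(\l_1\l_2)\big).$$
The remaining diagonal term is the ``common loops'' count, which I would bound crudely by $\sum_i \min\{B_{ii}(\l_1), B_{ii}(\l_2)\} \le \min\{\text{Tr}(B(\l_1)), \text{Tr}(B(\l_2))\} \le 2\min\{\l_1,\l_2\}$, invoking the loop bound of Corollary \ref{ThmOnNumbOfLoops}. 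Together these give the key estimate
$$\# \big(E(\Lambda_p(\l_1)) \cap E(\Lambda_p(\l_2))\big) \le \tfrac12 \text{Tr}\big(B(\l_1\l_2)\big) + \min\{\l_1,\l_2\}.$$

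It then remains to bound the trace. By Theorem \ref{TraceViaHCNs} we have $\text{Tr}(B(\l_1\l_2)) = \sum_{s^2 \le 4\l_1\l_2} H_p(4\l_1\l_2 - s^2)$; since $\l_1\l_2$ is not a perfect square, the argument $4\l_1\l_2 - s^2$ is strictly positive for every $s$ in range, so $H_p(0)$ never appears and each summand satisfies $H_p(D) \le H(D)$. Applying Theorem \ref{TheoremOnMoregeneralSumofnonmodifiedHCNs} with $m = \l_1\l_2$, whose divisors are $1, \l_1, \l_2, \l_1\l_2$, I would compute $2\sum_{d\mid m} d = 2(1+\l_1+\l_2+\l_1\l_2)$ and $\sum_{d\mid m}\min\{d,m/d\} = 2 + 2\min\{\l_1,\l_2\}$, whence
$$\text{Tr}\big(B(\l_1\l_2)\big) \le \sum_{s^2 \le 4\l_1\l_2} H(4\l_1\l_2 - s^2) = 2\l_1\l_2 + 2\l_1 + 2\l_2 - 2\min\{\l_1,\l_2\}.$$
Substituting into the key estimate, the two $\min\{\l_1,\l_2\}$ contributions cancel and I obtain $\# (E(\Lambda_p(\l_1)) \cap E(\Lambda_p(\l_2))) \le \l_1\l_2 + \l_1 + \l_2$.

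The delicate point I would check most carefully is exactly this cancellation. The factor $\tfrac12$ forced by the undirected convention leaves a deficit of $\min\{\l_1,\l_2\}$ in the trace bound, and this is precisely absorbed by the diagonal (loop) contribution, which Corollary \ref{ThmOnNumbOfLoops} bounds by $\min\{\l_1,\l_2\}$; the margin is therefore zero, so one cannot afford to be sloppy. In particular, estimating the whole intersection by $\text{Tr}(B(\l_1\l_2))$ directly, or applying $\min\{a,b\}\le ab$ indiscriminately on the diagonal (where it overshoots for loops), would cost a factor of roughly two and miss the stated bound. Hence the main care is to keep the loop terms separate from the off-diagonal bulk and to handle them through the loop bound rather than through the product inequality.
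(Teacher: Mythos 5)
Your proposal is correct, and its overall architecture matches the paper's proof: the same symmetrization $2\,\#\big(E(\Lambda_p(\l_1)) \cap E(\Lambda_p(\l_2))\big) = \sum_{i,j}\min\{B_{ij}(\l_1),B_{ij}(\l_2)\} + \sum_i \min\{B_{ii}(\l_1),B_{ii}(\l_2)\}$, the same bound $\min\{a,b\}\le ab$ off the diagonal, the same loop bound $2\min\{\l_1,\l_2\}$ on the diagonal, and the same Hurwitz class number evaluation $\sum_{s^2\le 4\l_1\l_2} H(4\l_1\l_2-s^2) = 2\l_2(\l_1+1)$ for $\l_1<\l_2$. The one genuine divergence is how you identify $\sum_{i,j} B_{ij}(\l_1)B_{ij}(\l_2)$ with $\text{Tr}(B(\l_1\l_2))$: the paper routes this through Lemma \ref{NoNameThm}, the geometric bijection $(f,g)\mapsto g^{\vee}\circ f$ between pairs of cyclic-kernel isogenies and cyclic-kernel endomorphisms of degree $\l_1\l_2$ (a lemma proved later, in Section \ref{Thebiroutenumber}, and reused for the bi-route number), together with the fact that $\#C_{ij}(m)=B_{ij}(m)$ for squarefree $m$. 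You instead use the multiplicativity $B(\l_1\l_2)=B(\l_1)B(\l_2)$ for coprime degrees (Proposition \ref{GrossProp2.7}(2)) plus the symmetry $B_{ij}=B_{ji}$ coming from $w_i=1$ when $p\equiv 1 \bmod 12$, so that $\sum_{i,j}B_{ij}(\l_1)B_{ij}(\l_2)=\text{Tr}(B(\l_1)B(\l_2))=\text{Tr}(B(\l_1\l_2))$. Your route is more elementary and self-contained, relying only on cited background facts rather than a forward reference; the paper's route buys a geometric statement that it needs anyway for Theorem \ref{21sep140500p}. Two small merits of your write-up worth keeping: you explicitly flag the (implicit in the paper) hypothesis $\l_1\ne\l_2$, without which both Proposition \ref{GrossProp2.7}(2) and the stated bound fail, and you verify that $4\l_1\l_2-s^2$ never vanishes, so the problematic term $H_p(0)=(p-1)/24 > H(0)$ never enters and the inequality $H_p(D)\le H(D)$ applies to every summand --- a point the paper's proof glosses over.
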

	\begin{proof} 
	Assume 
	w.l.o.g. that $\l_1< \l_2$. By the definition of the intersection number, 
		\[2 \cdot \# \big(E(\Lambda_p(\l_1)) \cap E(\Lambda_p(\l_2)) \big) 
					= \sum_{i , j =1}^{n} \min \{ B_{ij}(\l_1) ,B_{ij}(\l_2)\}  + \sum_{i=1}^{n} \min \{ B_{ii}(\l_1) ,B_{ii}(\l_2)\}. \]
	First, we have $\sum_{i} \min \{ B_{ii}(\l_1) ,B_{ii}(\l_2)\} \le \text{Tr}(B(\l_1)) \le 2 \l_1$, 
	as in equation (\ref{Apr050923p}). 
	Second, we can bound $\min \{ B_{ij}(\l_1) ,B_{ij}(\l_2)\}$ above by $B_{ij}(\l_1) \cdot B_{ij}(\l_2)$. 
	Third, $\#C_{ij}(m) = B_{ij}(m)$ for all square-free $m \in \N$, 
	since backtracking involves scalar isogenies (of square degree). 
	Fourth, Lemma \ref{NoNameThm}, proven in Section \ref{Thebiroutenumber}, 
	allows us to express the trace of $B(\l_1\l_2)$ as 
		\begin{equation*}
			\text{Tr}(B(\l_1\l_2)) = \sum_{i=1}^{n} \# C_{ii}(\l_1 \l_2) 
				= \sum_{i, j=1}^{n} \#\big(C_{ij}(\l_1) \times C_{ij}(\l_2)\big) 
				= \sum_{i, j=1}^{n} B_{ij}(\l_1) \cdot B_{ij}(\l_2). 
		\end{equation*} 
	Finally, since $H_p(D) \le H(D)$ for all $D>0$, 
	Theorems \ref{TraceViaHCNs} and \ref{TheoremOnMoregeneralSumofnonmodifiedHCNs} allow us to compute 
		\begin{align*}
			\text{Tr}(B(\l_1\l_2)) 
					&= \sum_{s^{2} \le 4 \l_1\l_2} H_p(4\l_1\l_2 - s^{2}) \\
					& \le \sum_{s^{2} \le 4 \l_1\l_2} H(4\l_1\l_2 - s^{2}) \\ 
					&= 2 \sum_{d | \l_1\l_2} d - \sum_{d | \l_1\l_2} \min \left \{ d , {\l_1\l_2}/{d} \right \}  \\
					&= 2 \l_2 (\l_1+ 1). 
		\end{align*} 
	Therefore, $\# \big(E(\Lambda_p(\l_1)) \cap E(\Lambda_p(\l_2)) \big) \le \l_1\l_2 + \l_1+\l_2$. 
	\end{proof}
		
	\begin{eg} \label{Jan180123p}
		The \emph{edit distance} $\text{d}_{\text{e}}(G_1, G_2)$ 
		between two graphs $G_1=(V,E_1)$ and $G_2=(V, E_2)$ on the same set of vertices 
		is the number of edges one would need to add or remove to one graph 
		to obtain the other (see section 1.5 in \cite{lovasz2009very} for more on the edit distance). 
		It can be expressed as $\text{d}_{\text{e}}(G_1, G_2) :=  \big|  E_1  \triangle E_2 \big|$ 
		where $E_1  \triangle E_2$ is the symmetric difference.

		Let $p,\l_1,\l_2$ be primes. 
		The edit distance $\text{d}_{\text{e}}(\Lambda_p(\ell_1), \Lambda_p(\ell_2))$ 
		between $\Lambda_p(\ell_1)$ and $\Lambda_p(\ell_2)$ is given 
		by $| E(\Lambda_p(\ell_1))  \triangle E(\Lambda_p(\ell_2))  | 
				= | E(\Lambda_p(\ell_1))  | + |E(\Lambda_p(\ell_2)) | - 2 | E(\Lambda_p(\ell_1)) \cap E(\Lambda_p(\ell_2))|$. 
		However, we know that 
			\[2 |E(\Lambda_p(\ell))| = 2 \sum_{i \le j} B_{ij}(\l) 
				= \sum_{i, j=1}^{n} B_{ij}(\l) + \text{Tr}(B(\l)) =n (\l+1) + \text{Tr}(B(\l)). \] 
		So, 
			\begin{equation*}
				\text{d}_{\text{e}}(\Lambda_p(\ell_1), \Lambda_p(\ell_2)) 
					= \tfrac{1}{2} n(\l_1 + \l_2+2)+\tfrac{1}{2}\text{Tr}(B(\l_1))+\tfrac{1}{2} \text{Tr}(B(\l_2))-2 | E(G_1) \cap E(G_2) |. 
			\end{equation*} 
		And by Corollary \ref{ThmOnNumbOfLoops} and Theorem \ref{BoundOnCommonEdges} we get 
		\begin{equation}
				-2 (\l_1\l_2 + \l_1 +\l_2) 
							\le \text{d}_{\text{e}}(\Lambda_p(\ell_1), \Lambda_p(\ell_2)) - \tfrac{1}{2} n (\l_1 + \l_2+2) 
							\le \l_1 +\l_2. 
		\end{equation} 
	\end{eg} 
	\begin{rmk} 
		Theorem \ref{BoundOnCommonEdges} and Example \ref{Jan180123p} justify the intuition that in practice, 
		when $\l_1,\l_2$ are small and $p$ is big, the two supersingular isogeny graphs $\Lambda_p(\l_1)$ and $\Lambda_p(\l_2)$ 
		have very few edges in common. 
		Moreover, as in Remark \ref{Jan170354p}, we actually expect the number of edges in common 
		given in Theorem \ref{BoundOnCommonEdges} to be less than $\frac{\l_1\l_2 + \l_1+\l_2}{2}$.  
	\end{rmk}

\subsection{The bi-route number} \label{Thebiroutenumber}
	Throughout this section as well, we will assume that $p \equiv 1 \mod 12$ so that the graphs $\Lambda_p(\cdot)$ 
	are undirected. We are interested in studying the number of instances 
	where there are two curves $E_i,E_j$ that are linked by paths of length less than $R$ in both graphs. 
	\begin{defn} \label{July170858PM} 
		Given two graphs $G_1$ and $G_2$ with common vertex set $V$, their \emph{$R^{\text{th}}$ bi-route number}, 
		denoted by $I(G_1, G_2,R)$, is given by 
		\begin{equation*} 
			I(G_1, G_2;R) 
				= \sum_{a_1,a_2=1}^{R} \sum_{v,w \in V} 
						\# \left\{ \begin{matrix} 
							(\wp_1,\wp_2) \text{ such that } \wp_i \text{ is a path from } v \text{ to } 
							\\ 
							w \text{ of length } a_i  \text{ in } G_i \text{ with no backtracking}
						\end{matrix} \right \} .
		\end{equation*} 	
	In the case of the isogeny graphs $\Lambda_p(\l_1)$ and $\Lambda_p(\l_2)$, 
	the \emph{$R^{\text{th}}$ bi-route number} is denoted by $I_p(\l_1, \l_2,R)$ to alleviate the notion. 
	It is the number of \emph{collisions} $(\wp_1,\wp_2)$ in $\Lambda_p(\l_1,\l_2)$ 
	where each path $\wp_i$ has length less than $R$ and lies in $\Lambda_p(\l_i)$ with no backtracking: 
		 \begin{equation} \label{May280239AM}
		 	I_p(\l_1, \l_2,R) = \sum_{a_1,a_2=1}^{R}  \sum_{i,j=1}^{n} \#\big(C_{i,j}(\l_1^{a_1}) \times C_{i,j}(\l_2^{a_2})\big). 
		 \end{equation} 
	If the bi-route number is large, then many vertices with a path between them in $\Lambda_p(\l_1)$ 
	would also have a path between them in $\Lambda_p(\l_2)$ and \emph{vice versa}. 
	{Hence, when it comes to finding paths on the graphs, i.e. finding isogenies between elliptic curves, 
	the bi-route number gives a general idea of how similar $\Lambda_p(\l_1)$ and $\Lambda_p(\l_2)$ are}. 
	\end{defn}
	\begin{rmk} \label{Aug140832p}
		When considering the $R^{\text{th}}$ bi-route number of \emph{expander graphs}, 
		it is important to assume that $R$ is relatively small. 
		Indeed, when $R$ is big (when $R \in \Theta(\log(n))$) a random walk on the graphs $\Lambda_p(\l_i)$ 
		becomes very close to the uniform distribution. 
		So there will be many paths, 
		in both $\Lambda_p(\l_1)$ and $\Lambda_p(\l_2)$, 
		between any two vertices. 
		Therefore, for big $R$, the $R^{\text{th}}$ bi-route number will be uninformatively large and not very relevant. 
	\end{rmk}

	\begin{lem}\label{NoNameThm}
		Let $a_1,a_2 \ge 1$ 
		and suppose that $p \equiv 1$ mod $12$, 
		then $$\sum_{j=1}^{n} \#\big( C_{ij}(\l_1^{a_1}) \times C_{ij}(\l_2^{a_2})\big)  = \#C_{ii}(\l_1^{a_1}\l_2^{a_2}). $$
	\end{lem}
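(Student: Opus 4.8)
The plan is to establish the identity by constructing an explicit bijection between the disjoint union $\bigsqcup_{j=1}^{n} C_{ij}(\l_1^{a_1}) \times C_{ij}(\l_2^{a_2})$ and the set $C_{ii}(\l_1^{a_1}\l_2^{a_2})$, exactly in the spirit of the composition argument used for multi-edges in Theorem \ref{July180316AM} and for common edges in Theorem \ref{NoEdgesinCommonTheorem}. The map sends a pair $(f,g)$, where $f,g : E_i \to E_j$ have cyclic kernels of degrees $\l_1^{a_1}$ and $\l_2^{a_2}$ respectively, to the endomorphism $g^{\vee} \circ f \in \text{End}(E_i)$ of degree $\l_1^{a_1}\l_2^{a_2}$. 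The decisive structural input is that $\gcd(\l_1^{a_1},\l_2^{a_2})=1$, so that every relevant kernel splits canonically into its $\l_1$-primary and $\l_2$-primary parts. Throughout I will invoke Remark \ref{Aug070918p}: since $p \equiv 1 \bmod 12$, each class in a set $C_{uv}(m)$ is represented by an isogeny unique up to sign, equivalently is determined by its kernel.

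First I would check that the forward map is well defined and lands in $C_{ii}(\l_1^{a_1}\l_2^{a_2})$. Replacing $f,g$ by $\pm f, \pm g$ only changes $g^{\vee} \circ f$ by a sign, so the class is well defined. For cyclicity I would use the elementary fact that an isogeny whose degree is prime to $\l$ restricts to an isomorphism on $\l$-power torsion. Since $\deg g^{\vee} = \l_2^{a_2}$ is prime to $\l_1$, the $\l_1$-primary part of $\ker(g^{\vee} \circ f)$ equals $\ker f$, which is cyclic of order $\l_1^{a_1}$; dually, since $\deg f = \l_1^{a_1}$ is prime to $\l_2$, the $\l_2$-primary part is $f^{-1}(\ker g^{\vee})\cong \ker g^{\vee}$, which is cyclic of order $\l_2^{a_2}$ because the dual of a cyclic isogeny of prime-power degree is again cyclic. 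As the two primary parts are cyclic of coprime orders, $\ker(g^{\vee} \circ f)$ is cyclic of order $\l_1^{a_1}\l_2^{a_2}$, so $g^{\vee} \circ f \in C_{ii}(\l_1^{a_1}\l_2^{a_2})$.

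Next I would produce the inverse and thereby bijectivity. Given $h \in C_{ii}(\l_1^{a_1}\l_2^{a_2})$, decompose $\ker h = K_1 \oplus K_2$ into its $\l_1$- and $\l_2$-primary parts, each cyclic. Let $f : E_i \to E_j := E_i/K_1$ be the quotient isogeny, so $\ker f = K_1$ and $f \in C_{ij}(\l_1^{a_1})$. Since $K_1 \subseteq \ker h$, Proposition \ref{MissingLink} furnishes a unique $\gamma : E_j \to E_i$ with $h = \gamma \circ f$; one computes $\ker \gamma = f(K_2)\cong K_2$, cyclic of order $\l_2^{a_2}$, so setting $g := \gamma^{\vee} : E_i \to E_j$ gives $g \in C_{ij}(\l_2^{a_2})$ with the same target as $f$ and $g^{\vee} \circ f = \gamma \circ f = h$. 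For injectivity I would note that these constructions recover $\ker f$ as the $\l_1$-primary part of $\ker h$ and $\ker g^{\vee} = f(\ker(h)[\l_2^{\infty}])$ as the image of the $\l_2$-primary part; hence $f$ and $g$ are each determined by $h$ up to sign, so the forward map is injective. Combining surjectivity and injectivity yields the bijection, and counting its two sides gives the claimed equality.

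The routine steps are the degree and kernel computations; the part demanding the most care is the bookkeeping of the sign equivalence together with the insistence that $f$ and $g$ share a common target $E_j$, since this is what distinguishes $\sum_j \#\big(C_{ij}(\l_1^{a_1}) \times C_{ij}(\l_2^{a_2})\big)$ from a product of two independent counts. The clean resolution is to phrase everything through kernels via Remark \ref{Aug070918p} and to exploit the coprimality of $\l_1^{a_1}$ and $\l_2^{a_2}$, so that the primary decomposition of $\ker h$ canonically reconstructs the data $(\ker f, \ker g^{\vee})$; the two cyclicity facts (the dual of a cyclic prime-power isogeny is cyclic, and an isogeny of degree prime to $\l$ is an isomorphism on $\l$-power torsion) are precisely what make the correspondence respect the cyclic-kernel condition defining the sets $C_{\bullet\bullet}$.
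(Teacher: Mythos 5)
Your proposal is correct and follows essentially the same route as the paper's own proof: the same bijection $(f,g)\mapsto g^{\vee}\circ f$, the same use of Proposition \ref{MissingLink} to factor an endomorphism with cyclic kernel through the quotient by the $\l_1$-primary part of its kernel, and the same reliance on Remark \ref{Aug070918p} to reduce sign ambiguities to equality of kernels. The only cosmetic difference is that you verify well-definedness and injectivity by explicit primary-decomposition of kernels, where the paper phrases well-definedness via absence of backtracking and injectivity via the uniqueness of the $\Z/\l_1^{a_1}\Z$-subgroup of a cyclic group — the same underlying facts.
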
 
	\begin{proof}
		Consider the map  
			\[\Phi: \bigcup_{j=1}^{n} \big( C_{ij}(\l_1^{a_1}) \times C_{ij}(\l_2^{a_2})\big)  
					\longrightarrow C_{ii}(\l_1^{a_1}\l_2^{a_2})) \]
		where $(f,g) \mapsto g^{\vee} \circ f$. 
		If $f$ and $g$ do not involve backtracking then $g^{\vee} \circ f$ also does not. So $\Phi$ is well defined. 
		As the above union is disjoint, it is enough to show that $\Phi$ is a bijection. 
				
		First, suppose that we have maps $f_1 \in \text{Hom}_{\l_1^{a_1}}(E_i,E_j)$, $f_2 \in \text{Hom}_{\l_1^{a_1}}(E_i,E_{j’})$, 
		$g_1 \in \text{Hom}_{\l_2^{a_2}}(E_i,E_j)$ and $g_2 \in \text{Hom}_{\l_2^{a_2}}(E_i,E_{j’})$ 
		with cyclic kernels such that $g_1^{\vee} \circ f_1 \sim g_2^{\vee} \circ f_2$, 
		i.e. $\ker(g_1^{\vee} \circ f_1) = \ker(g_2^{\vee} \circ f_2)$ by Remark \ref{Aug070918p}.  
		We already know that both $\ker(f_1)$ and $\ker(f_2)$ are isomorphic to $\Z/\l_1^{a_1}\Z$, 
		as they are cyclic and have size $\l_1^{a_1}$. But we need to show that they are equal, 
		and not just isomorphic, in order to get $f_1 \sim f_2$. 
		Let $K:= \ker(g_1^{\vee} \circ f_1) = \ker(g_2^{\vee} \circ f_2)$. It must be cyclic of degree $\l_1^{a_1}\l_2^{a_2}$, 
		so $K \cong \Z/\l_1^{a_1}\l_2^{a_2}\Z$.
		But $\ker(f_1), \ker(f_2) \sbst K$ 
		and $\Z/\l_1^{a_1}\l_2^{a_2}\Z$ has a unique subgroup isomorphic to $\Z/\l_1^{a_1}\Z$, so $\ker(f_1) = \ker(f_2)$. 
		In addition, the sequence 
		\[0 \longrightarrow \Z/\l_1^{a_1}\Z \longrightarrow \Z/\l_1^{a_1}\l_2^{a_2}\Z \longrightarrow \Z/\l_2^{a_2}\Z \longrightarrow 0\]
		splits, and we must also have $\ker(g_1) = \ker(g_2)$. 
		Thus, $\Phi$ is injective. 
		
		Now, let $h : E_i \longrightarrow E_i$ be an isogeny of degree $\l_1^{a_1}\l_2^{a_2}$ with cyclic kernel. 
		Since $p \! \! \not | \l_1^{a_1}\l_2^{a_2}$, the isogeny $h$ must be separable and $\# \ker(h) = \l_1^{a_1} \l_2^{a_2}$. 
		So $\ker(h) \cong \Z/\l_1^{a_1} \l_2^{a_2}\Z$. 
		Let $A \cong \Z/\l_1^{a_1}\Z$ be the $\l_1$-primary part of $\ker(h)$. Let $E’:= E_i/A$. 
		We get a canonical map $f:= E_i \longrightarrow E’$ of degree $\l_1^{a_1}$ with kernel $\ker(f) =A \sbst \ker(h)$. 
		Then, by Proposition \ref{MissingLink}, (see Corollary 4.11 in \cite{Sil09}), there exists a unique isogeny $g’ : E’ \longrightarrow E_i$ 
		such that $h = g’ \circ f$. Since we know that $\deg(h) = \deg(g’) \deg(f)$, we see that $\#\ker(g’) = \l_2^{a_2}$. 
		In addition, $\ker(g’) \cong \Z/\l_2^{a_2}\Z$ since $\ker(g’) = f(\ker(h)) \cong (\Z/\l_1^{a_1}\l_2^{a_2}\Z)/(\l_1^{a_1})$. 
		Hence, both kernels of $f$ and $g’$ are cyclic. 
		Finally, let $g:=(g’)^{\vee}$ to get $h= g^{\vee} \circ f =\Phi(f,g)$ as desired. 
	\end{proof}
	Lemma \ref{NoNameThm} allows us to give an alternate expression for 
	the $R^{\text{th}}$ bi-route number $I_p(\l_1, \l_2,R)$ originally defined as in equation (\ref{May280239AM}): 
	\begin{align} \label{21sep150935p}
	\begin{split}
			I_p(\l_1, \l_2,R) 
						& = \sum_{a_1,a_2=1}^{R}  \sum_{i=1}^{n} \# C_{i,i}(\l_1^{a_1} \l_2^{a_2}) \\ 
						& = \sum_{a_1,a_2=1}^{R}  \sum_{i=1}^{n} B_{i,i}(\l_1^{a_1} \l_2^{a_2})  
											- \# \{\text{maps in End}_{\l_1^{a_1} \l_2^{a_2}}(E_i) \text{ with scalar factors}\}. 
	\end{split} 
	\end{align} 
	Our goal now is to find a bound for $I_p(\l_1, \l_2,R)$. 
	We thus start by computing the number of maps in $\text{End}_{\l_1^{a_1} \l_2^{a_2}}(E_i)$ with scalar factors. 
	\begin{lem} \label{21sep140233p}
		The number of isogenies in $\text{End}_{\l_1^{a_1} \l_2^{a_2}}(E_i)$ with scalar factors 
		is $B_{i,i}(\l_1^{a_1 -2 } \l_2^{a_2}) + B_{i,i}(\l_1^{a_1} \l_2^{a_2-2}) - B_{i,i}(\l_1^{a_1 -2 } \l_2^{a_2-2})$. 
	\end{lem}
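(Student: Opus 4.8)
The plan is to count the endomorphisms in $\text{End}_{\l_1^{a_1}\l_2^{a_2}}(E_i)$ that admit a scalar factor by recognizing such a factor as a factorization through $[\l_1]$ or $[\l_2]$, and then applying inclusion--exclusion. All counts are taken up to the automorphism group, which here is $\{\pm\text{Id}\}$ since $p\equiv 1 \mod 12$. First I would observe that, because $\deg([m])=m^2$ divides $\l_1^{a_1}\l_2^{a_2}$, any nontrivial scalar factor $[m]$ of an $h\in\text{End}_{\l_1^{a_1}\l_2^{a_2}}(E_i)$ must itself be divisible by $[\l_1]$ or $[\l_2]$. Hence, writing $A_j := \{h : h=[\l_j]\circ h' \text{ for some } h'\in\text{End}(E_i)\}$, the isogenies with scalar factors are exactly $A_1\cup A_2$, and the quantity to compute is $\#(A_1\cup A_2)=\#A_1+\#A_2-\#(A_1\cap A_2)$.

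The key step is to recast each $A_j$ as a kernel condition. Using the centrality of $[\l_j]$ in $\text{End}(E_i)$, a direct check on $\l_j$-torsion points shows that $h=[\l_j]\circ h'$ forces $E_i[\l_j]=\ker([\l_j])\subseteq\ker(h)$; conversely, applying Proposition \ref{MissingLink} with the separable isogeny $\phi=[\l_j]$ and $\psi=h$ produces a factorization $h=[\l_j]\circ h'$ whenever $E_i[\l_j]\subseteq\ker(h)$. Thus $h\in A_j$ if and only if $\ker(h)\supseteq E_i[\l_j]$. Running the same argument with $[\l_1\l_2]$, and using that $E_i[\l_1]+E_i[\l_2]=E_i[\l_1\l_2]$ because $\l_1,\l_2$ are coprime, gives $A_1\cap A_2=\{h : \ker(h)\supseteq E_i[\l_1\l_2]\}=\{h : h=[\l_1\l_2]\circ h''\}$.

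Next I would turn each factorization into a bijective count. Since $[\l_j]$ is a nonzero isogeny, post-composition by it is injective on endomorphisms, so $h\mapsto h'$ is a bijection from $A_j$ onto $\text{End}_{\l_1^{a_1}\l_2^{a_2}/\l_j^{2}}(E_i)$; moreover it intertwines the sign action, since $[\l_j]\circ(\pm h')=\pm([\l_j]\circ h')$, and therefore descends to a bijection of $\{\pm\text{Id}\}$-classes. Comparing degrees then yields $\#A_1=B_{ii}(\l_1^{a_1-2}\l_2^{a_2})$, $\#A_2=B_{ii}(\l_1^{a_1}\l_2^{a_2-2})$, and $\#(A_1\cap A_2)=B_{ii}(\l_1^{a_1-2}\l_2^{a_2-2})$, where the convention $B(m)=0$ for $m\notin\N$ automatically disposes of the degenerate cases $a_1<2$ or $a_2<2$ (in which the corresponding factorization simply cannot occur). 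Substituting into the inclusion--exclusion identity gives precisely the claimed expression.

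I expect the main obstacle to be the bookkeeping in the middle step: confirming that ``admits a scalar factor'' is exactly the union $A_1\cup A_2$ and, above all, correctly pinning down the intersection $A_1\cap A_2$ as factorization through $[\l_1\l_2]$ rather than merely through $[\l_1]$ and $[\l_2]$ individually. The kernel-containment reformulation via Proposition \ref{MissingLink} is what makes this rigorous. One must also stay attentive throughout to the fact that the $B_{ii}$ count endomorphisms up to automorphism, which the hypothesis $p\equiv 1 \mod 12$ reduces to the harmless sign ambiguity $\{\pm\text{Id}\}$.
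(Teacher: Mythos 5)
Your proof is correct and follows essentially the same route as the paper: both arguments count the endomorphisms divisible by $[\l_1]$ or by $[\l_2]$ via inclusion--exclusion, identifying each of the three terms with a Brandt-matrix entry by cancelling the scalar factor. The paper phrases the cancellation informally as removing a backtracking step $f\circ f^{\vee}=[\deg(f)]$ from the corresponding path, whereas you make the same step rigorous through the kernel-containment criterion $E_i[\l_j]\subseteq\ker(h)$ and Proposition \ref{MissingLink} (including the intersection term, which the paper glosses over); this is a refinement in presentation rather than a genuinely different argument.
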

	\begin{proof} 
	Let $\phi \in \text{End}_{\l_1^{a_1} \l_2^{a_2}}(E_i)$ be an isogeny involving a scalar factor. 
	It must correspond to a path that involves backtracking, as explained at the end of Section \ref{SectionSupersingularisogenygraphs}. 
	However backtracking involves composing a factor $f$ (of minimal degree, without loss of generality) of $\phi$ 
	with its dual $f^{\vee}$ to get $[\deg(f)]$. 
	The map $\phi/ [\deg(f)]$, obtained by removing the backtracking caused by $f \circ f^{\vee}$, 
	is either in $\text{End}_{\l_1^{a_1-2} \l_2^{a_2}}(E_i)$ if the degree of $f$ was $\l_1$ 
	or in $\text{End}_{\l_1^{a_1} \l_2^{a_2-2}}(E_i)$ if the degree of $f$ was $\l_2$. 
	Therefore, by the inclusion-exclusion principle, the number of maps 
	in $\text{End}_{\l_1^{a_1} \l_2^{a_2}}(E_i)$ with scalar factors is 
	equal to $B_{i,i}(\l_1^{a_1 -2 } \l_2^{a_2}) + B_{i,i}(\l_1^{a_1} \l_2^{a_2-2}) - B_{i,i}(\l_1^{a_1 -2 } \l_2^{a_2-2})$. 
	\end{proof} 
	Lemma \ref{21sep140233p} and equation (\ref{21sep150935p}) allow us to write 
		\begin{align*}  
			I_p(\l_1, \l_2,R) &= \sum_{a_1,a_2=1}^{R} \text{Tr}(B(\l_1^{a_1} \l_2^{a_2})) -\text{Tr}(B(\l_1^{a_1-2} \l_2^{a_2})) 
						- \text{Tr}(B(\l_1^{a_1} \l_2^{a_2-2})) + \text{Tr}(B(\l_1^{a_1-2} \l_2^{a_2-2})) \\ 
						& = \text{Tr}(B(\l_1^{R} \l_2^{R}))+\text{Tr}(B(\l_1^{R-1} \l_2^{R}))
								+\text{Tr}(B(\l_1^{R} \l_2^{R-1}))+\text{Tr}(B(\l_1^{R-1} \l_2^{R-1})) \\
						& \qquad - \text{Tr}(B(\l_1^{R}))- \text{Tr}(B(\l_1^{R-1}))-\text{Tr}(B(\l_2^{R}))-\text{Tr}(B(\l_2^{R-1})) + n \\ 
						& \stackrel{(\dagger)}{=} \sum_{s^{2} \le 4 \l_1^{R}\l_2^{R} } H_p(4\l_1^{R}\l_2^{R} - s^{2}) 
								\qquad + \sum_{s^{2} \le 4 \l_1^{R}\l_2^{R-1}} H_p(4\l_1^{R}\l_2^{R-1} - s^{2}) \\ 
								& \qquad + \sum_{s^{2} \le 4 \l_1^{R-1}\l_2^{R}} H_p(4\l_1^{R-1}\l_2^{R} - s^{2}) 
								\qquad + \sum_{s^{2} \le 4 \l_1^{R-1}\l_2^{R-1}} H_p(4\l_1^{R-1}\l_2^{R-1} - s^{2}) \\ 
								& \qquad - \sum_{s^{2} \le 4 \l_1^{R}} H_p(4\l_1^{R} - s^{2}) 
								\qquad - \sum_{s^{2} \le 4 \l_1^{R-1}} H_p(4\l_1^{R-1} - s^{2})  \\ 
								& \qquad - \sum_{s^{2} \le 4 \l_2^{R}} H_p(4\l_2^{R} - s^{2}) 
								\qquad - \sum_{s^{2} \le 4 \l_2^{R-1}} H_p(4\l_2^{R-1} - s^{2}) \quad + n.
		\end{align*} 	
	Recall that $2H_p(0) = n$. 
	Moreover, on the right hand side of $(\dagger)$ in the above equation, there is precisely one instance of $2H_p(0)$ 
	in the first $4$ summations and two instances of $-2H_p(0)$ in the last $4$ summations. 
	And since 	$0 \le H_p(D) \le H(D)$ for all $D>0$, we have 
	\begin{align} \label{May020118} \allowdisplaybreaks
			\begin{split} 
				I_p(\l_1, \l_2,R) 
					& = \sum_{s^{2} < 4 \l_1^{R}\l_2^{R} } H_p(4\l_1^{R}\l_2^{R} - s^{2}) 
								\qquad + \sum_{s^{2} < 4 \l_1^{R}\l_2^{R-1}} H_p(4\l_1^{R}\l_2^{R-1} - s^{2}) \\ 
								& \qquad + \sum_{s^{2} < 4 \l_1^{R-1}\l_2^{R}} H_p(4\l_1^{R-1}\l_2^{R} - s^{2}) 
								\qquad + \sum_{s^{2} < 4 \l_1^{R-1}\l_2^{R-1}} H_p(4\l_1^{R-1}\l_2^{R-1} - s^{2}) \\ 
								& \qquad - \sum_{s^{2} < 4 \l_1^{R}} H_p(4\l_1^{R} - s^{2}) 
								\qquad - \sum_{s^{2} < 4 \l_1^{R-1}} H_p(4\l_1^{R-1} - s^{2})  \\ 
								& \qquad - \sum_{s^{2} < 4 \l_2^{R}} H_p(4\l_2^{R} - s^{2}) 
								\qquad - \sum_{s^{2} < 4 \l_2^{R-1}} H_p(4\l_2^{R-1} - s^{2}) \\ 
					&  \stackrel{(\ddagger)}{\le} \sum_{s^{2} < 4 \l_1^{R}\l_2^{R} } H(4\l_1^{R}\l_2^{R} - s^{2}) 
								+ \sum_{s^{2} < 4 \l_1^{R}\l_2^{R-1}} H(4\l_1^{R}\l_2^{R-1} - s^{2}) \\ 
								& \qquad + \sum_{s^{2} < 4 \l_1^{R-1}\l_2^{R}} H(4\l_1^{R-1}\l_2^{R} - s^{2}) 
								+ \sum_{s^{2} < 4 \l_1^{R-1}\l_2^{R-1}} H(4\l_1^{R-1}\l_2^{R-1} - s^{2}). 
			\end{split}
		\end{align} 
		Let us look closer at the last $4$ summations of Hurwitz class numbers above, in (\ref{May020118}). 
		We know from Theorem \ref{TheoremOnMoregeneralSumofnonmodifiedHCNs} that  
		\begin{align*}
			\sum_{s^{2} < 4 \l_1^{R}\l_2^{R} } H(4\l_1^{R}\l_2^{R} - s^{2})  
				& \le   2 \sum_{d| \l_1^{R}\l_2^{R}} d - \sum_{d| \l_1^{R}\l_2^{R}} \min \left(d, {\l_1^{R}\l_2^{R}}/{d} \right) \\ 
				&= \mathds{1}_{(R \text{ is even})} (\l_1 \l_2)^{R/2} +2 \sum_{\substack{d| \l_1^{R}\l_2^{R} \\ d >(\l_1\l_2)^{R/2}}} d. 
		\end{align*} 
		Hence, we can bound the $R^{\text{th}}$ bi-route number as follows. 
	\begin{theorem} \label{21sep140500p}
	Let $\l_1 < \l_2$ and $p \equiv 1 \! \mod 12$. 
	The number of \emph{collisions} $(\wp_1,\wp_2)$ in $\Lambda_p(\l_1,\l_2)$, 
	where each path $\wp_i$ has length less than $R$ and lies in $\Lambda_p(\l_i)$ with no backtracking, 
	is bounded above by  
			\begin{equation*} 
			I_p(\l_1, \l_2,R) 
				\le 	(\l_1\l_2)^{\lfloor R/2 \rfloor}  
						+ 2 \! \! \! \! \! \sum_{\substack{d| \l_1^{R}\l_2^{R} \\ d >(\l_1\l_2)^{R/2}}} \! \!  \! d  
						+ 2 \! \! \! \! \! \sum_{\substack{d| \l_1^{R-1}\l_2^{R} \\ d >\l_1^{(R-1)/2}\l_2^{R}}} \! \! \!  d 
						+ 2 \! \! \! \sum_{\substack{d| \l_1^{R}\l_2^{R-1} \\ d >\l_1^{R}\l_2^{(R-1)/2}}} \! \! \!  d
						+ 2 \! \! \! \! \! \sum_{\substack{d| \l_1^{R-1}\l_2^{R-1} \\ d >(\l_1\l_2)^{(R-1)/2}}} \! \! \!  d. 
		\end{equation*}	
	\end{theorem}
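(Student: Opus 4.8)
The plan is to pick up exactly where the displayed computation preceding the statement leaves off, so that almost all of the substantive work is already in place. Via the trace telescoping in equation (\ref{21sep150935p}), Lemma \ref{21sep140233p}, Gross's trace formula (Theorem \ref{TraceViaHCNs}) and the pointwise bound $H_p(D)\le H(D)$, inequality (\ref{May020118}) has reduced $I_p(\l_1,\l_2,R)$ to a sum of four sums of ordinary Hurwitz class numbers,
\[ I_p(\l_1,\l_2,R)\le \sum_{i=1}^{4}\ \sum_{s^2<4m_i} H(4m_i-s^2), \]
where $m_1=\l_1^{R}\l_2^{R}$, $m_2=\l_1^{R}\l_2^{R-1}$, $m_3=\l_1^{R-1}\l_2^{R}$ and $m_4=\l_1^{R-1}\l_2^{R-1}$. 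Thus the only remaining task is to evaluate, or cleanly bound, each of these four class-number sums and then reassemble them.

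First I would treat a single generic sum. Theorem \ref{TheoremOnMoregeneralSumofnonmodifiedHCNs} evaluates the non-strict sum as $\sum_{s^2\le 4m}H(4m-s^2)=2\sum_{d\mid m}d-\sum_{d\mid m}\min\{d,m/d\}$, and passing to the strict sum merely removes the terms with $s^2=4m$. Such terms occur precisely when $m$ is a perfect square, in which case there are two of them, each equal to $H(0)=-1/12$, contributing $-1/6$ in total. The key simplification is the elementary divisor identity
\[ 2\sum_{d\mid m}d-\sum_{d\mid m}\min\{d,m/d\}=\sum_{d\mid m}\max\{d,m/d\}=\mathds{1}_{\{\sqrt{m}\in\Z\}}\,\sqrt{m}+2\!\!\sum_{\substack{d\mid m\\ d>\sqrt{m}}}\!\! d, \]
obtained by pairing each divisor $d<\sqrt{m}$ with its complement $m/d>\sqrt{m}$, so that $\{m/d:d<\sqrt m\}=\{d':d'>\sqrt m\}$.

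Next I would apply this identity to each $m_i$ and carry out the parity bookkeeping. Of the two ``pure'' exponents, exactly one of $\sqrt{m_1}=(\l_1\l_2)^{R/2}$ and $\sqrt{m_4}=(\l_1\l_2)^{(R-1)/2}$ is an integer according to whether $R$ is even or odd, and in either case the surviving square contributes exactly $(\l_1\l_2)^{\lfloor R/2\rfloor}$. By contrast $m_2$ and $m_3$ carry exponents of opposite parity on $\l_1$ and $\l_2$, so they are never perfect squares and produce no $\sqrt{m_i}$ term. Summing the four ``$2\sum_{d>\sqrt{m_i}}d$'' contributions together with this single amalgamated factor $(\l_1\l_2)^{\lfloor R/2\rfloor}$ then yields the asserted bound (the thresholds $\sqrt{m_i}$ determining the four divisor ranges).

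The individual steps are all routine, so the only place that genuinely needs care — and what I expect to be the main obstacle — is the combined bookkeeping: I must verify that the perfect-square indicators across the four sums collapse to exactly one copy of $(\l_1\l_2)^{\lfloor R/2\rfloor}$ rather than zero or two, and that the terms with $s^2=4m$ omitted when passing from the non-strict to the strict sum (each equal to $H(0)=-1/12$, totalling at most $+1/6$ after the sign) do not spoil the inequality. The cleanest way to dispatch this last point is to note that $I_p(\l_1,\l_2,R)$ is a non-negative integer: the exact expression for the right-hand side is an integer plus the harmless fractional surplus $1/6$, so rounding down recovers the stated integer bound and justifies the final $\le$.
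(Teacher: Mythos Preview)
Your proposal is correct and follows essentially the same route as the paper: reduce via (\ref{May020118}) to four strict Hurwitz sums, evaluate each by Theorem \ref{TheoremOnMoregeneralSumofnonmodifiedHCNs} together with the divisor identity $2\sigma(m)-\sum_{d\mid m}\min(d,m/d)=\mathds{1}_{\sqrt{m}\in\Z}\sqrt{m}+2\sum_{d>\sqrt{m}}d$, and observe that exactly one of $m_1,m_4$ is a perfect square. You are in fact slightly more careful than the paper about the $H(0)=-1/12$ correction when passing from $s^2\le 4m$ to $s^2<4m$, and your integrality argument to absorb the resulting $+1/6$ is the clean way to finish.
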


		The bound in Theorem \ref{21sep140500p} depends on $\l$ and $R$. 
		And since $R$ is never too big (see Remark \ref{Aug140832p}), 
		the bound 
		can be efficiently computed. 
		Moreover, as explained in Remark \ref{Jan170354p}, we actually expect the value of $I_p(\l_1, \l_2,R)$ 
		to be less than half of the right hand side of $(\ddagger)$ in (\ref{May020118}). 
		
		Let us now find a more concrete upper bound for $I_p(\l_1, \l_2,R)$. 
		Assume without loss of generality that $\l_1 < \l_2$, then for all $r \ge 1$, 
			\begin{equation} \label{21sep160443p}
				\sum_{\substack{d| \l_1^{r}\l_2^{s} \\ d >\l_1^{r/2}\l_2^{s/2}}} \! \! \!  d 
					= \sum_{i=0}^{r} \sum_{j= 1+ \lfloor s/2 + (r/2 -i) \log_{\l_2}(\l_1) \rfloor}^{s}  
							\! \l_1^i \l_2^j 
					\le \frac{\l_1^{r+1}\l_2^{s+1} - \l_2^{s+1} -(\l_1-1) (r+1) \l_1^{r/2}\l_2^{s/2}}{(\l_1-1)(\l_2-1)}.
			\end{equation} 
	We can then combine (\ref{21sep160443p}) with Theorem \ref{21sep140500p} to obtain the following corollary. 
	\begin{cor} 
	The $R^{\text{th}}$ bi-route number of $\Lambda_p(\l_1)$ and $\Lambda_p(\l_2)$ is bounded above by 
		\begin{equation*} 
		\begin{split} 
			I_p(\l_1, \l_2,R) 
				\le   2 (\l_1\l_2)^{R} \frac{(\l_1+1)(\l_2+1)}{(\l_1-1)(\l_2-1)} + (\l_1\l_2)^{\lfloor R/2 \rfloor} 
					- 4 \frac{\l_2^{R} (\l_2+1)}{(\l_1-1)(\l_2-1)} \\ 
					  - 2 \frac{(\l_1\l_2)^{\frac{R-1}{2}} (\sqrt{\l_1}+1)(R \sqrt{\l_2}+R+\sqrt{\l_2})}{\l_2-1}.
		\end{split} 
		\end{equation*} 
	\end{cor}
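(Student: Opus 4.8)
The plan is to substitute the elementary divisor-sum estimate in equation (\ref{21sep160443p}) directly into the four divisor sums appearing in the bound of Theorem \ref{21sep140500p}, and then to collect the resulting terms. The reduction from traces of Brandt matrices to sums of Hurwitz class numbers, and from those to divisor sums via Theorem \ref{TheoremOnMoregeneralSumofnonmodifiedHCNs}, has already been carried out in the derivation leading to Theorem \ref{21sep140500p}; in particular the standalone term $(\l_1\l_2)^{\lfloor R/2 \rfloor}$ — which packages the perfect-square indicator contributions (only one of the four moduli $\l_1^R\l_2^R$, $\l_1^{R-1}\l_2^R$, $\l_1^R\l_2^{R-1}$, $\l_1^{R-1}\l_2^{R-1}$ can be a square, and its root is exactly $(\l_1\l_2)^{\lfloor R/2 \rfloor}$) — carries over unchanged. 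So the entire task reduces to bounding the four weighted divisor sums and adding them up.

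First I would apply (\ref{21sep160443p}) with the four exponent pairs $(r,s) \in \{(R,R),\,(R-1,R),\,(R,R-1),\,(R-1,R-1)\}$, corresponding respectively to the four divisors listed above. Each application produces, after a common denominator $(\l_1-1)(\l_2-1)$, three families of terms: a leading product $\l_1^{r+1}\l_2^{s+1}$, a negative term $-\l_2^{s+1}$, and a half-integer correction $-(\l_1-1)(r+1)\l_1^{r/2}\l_2^{s/2}$. I would then add the four contributions, each weighted by the factor $2$ coming from Theorem \ref{21sep140500p}, family by family.

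The three families collapse cleanly. The four leading products sum to $\l_1^R\l_2^R(\l_1\l_2+\l_2+\l_1+1)=(\l_1\l_2)^R(\l_1+1)(\l_2+1)$, giving the first term of the corollary after the weight $2$ and division. The four $-\l_2^{s+1}$ terms sum to $-2\l_2^R(\l_2+1)$, producing the term $-4\l_2^R(\l_2+1)/\big((\l_1-1)(\l_2-1)\big)$. Finally, each of the four corrections carries a factor $(\l_1-1)$ that cancels one factor in the denominator, leaving a linear combination of the monomials $\l_1^{r/2}\l_2^{s/2}$ with coefficients $R$ or $R+1$; the claim is that this recombines, after factoring out $(\l_1\l_2)^{(R-1)/2}$, into the product $(\sqrt{\l_1}+1)(R\sqrt{\l_2}+R+\sqrt{\l_2})$, which yields the last term.

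The main obstacle is entirely bookkeeping rather than ideas: tracking the half-integer powers $\l_1^{r/2}\l_2^{s/2}$ and, above all, correctly pairing the combinatorial coefficients $R$ versus $R+1$ with the right monomials when the corrections are reassembled into the factored form. I would verify this step by expanding $(\l_1\l_2)^{(R-1)/2}(\sqrt{\l_1}+1)(R\sqrt{\l_2}+R+\sqrt{\l_2})$ and matching it monomial-by-monomial against the four correction contributions. Since $\l_1<\l_2$ is assumed, I would also double-check that the negative (subtracted) terms are handled so that every intermediate estimate stays on the correct side of the inequality, so that the final expression is a genuine upper bound for $I_p(\l_1,\l_2,R)$.
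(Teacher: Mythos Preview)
Your proposal is correct and is exactly the paper's approach: the paper's entire proof is the one-line remark that one combines the divisor-sum estimate (\ref{21sep160443p}) with Theorem \ref{21sep140500p}, and you have simply unpacked that combination term by term. Your caution about checking the factorization of the four half-integer correction terms is well placed --- when you actually expand $(\sqrt{\l_1}+1)(R\sqrt{\l_2}+R+\sqrt{\l_2})$ and compare, you will find the roles of $\sqrt{\l_1}$ and $\sqrt{\l_2}$ appear swapped relative to what the direct sum $(R+1)\sqrt{\l_1\l_2}+(R+1)\sqrt{\l_1}+R\sqrt{\l_2}+R$ gives, so the clean factorization should read $(\sqrt{\l_2}+1)(R\sqrt{\l_1}+R+\sqrt{\l_1})$; this does not affect the method or the asymptotic conclusion.
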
 
	This gives us an idea of how many paths any two \ss \ isogeny graphs can have in common. 
	Their $R^{\text{th}}$ bi-route number $I_p(\l_1, \l_2,R)$ is in $O \big( (\l_1\l_2)^{R}\big)$.

\phantomsection 

\bibliographystyle{abbrv} 
\bibliography{ThesisReferences}

\begin{thebibliography}{10}

\bibitem{CLG09}
D.~X. Charles, K.~E. Lauter, and E.~Z. Goren.
\newblock Cryptographic hash functions from expander graphs.
\newblock {\em Journal of Cryptology}, 22(1):93--113, 2009.

\bibitem{DFJP14}
L.~De~Feo, D.~Jao, and J.~Pl{\^u}t.
\newblock Towards quantum-resistant cryptosystems from supersingular elliptic
  curve isogenies.
\newblock {\em Journal of Mathematical Cryptology}, 8(3):209--247, 2014.

\bibitem{Deu41}
M.~Deuring.
\newblock Die typen der multiplikatorenringe elliptischer funktionenk{\"o}rper.
\newblock In {\em Abhandlungen aus dem mathematischen Seminar der
  Universit{\"a}t Hamburg}, volume~14, pages 197--272. Springer, 1941.

\bibitem{Gie80}
J.~Gierster.
\newblock Ueber relationen zwischen klassenzahlen bin{\"a}rer quadratischer
  formen von negativer determinante.
\newblock {\em Mathematische Annalen}, 21(1):1--50, 1883.

\bibitem{Gol01}
O.~Goldreich.
\newblock Randomized methods in computation, lecture 10.
  \url{http://www.wisdom.weizmann.ac.il/~oded/PS/RND/l10.ps}.
\newblock pages 3--4, Spring 2001.

\bibitem{Gro87}
B.~Gross.
\newblock Heights and the special values of $\text{L}$-series.
\newblock In {\em Conference Proceedings of the CMS}, volume~7, 1987.

\bibitem{Hur85}
A.~Hurwitz.
\newblock Ueber relationen zwischen classenanzahlen bin{\"a}rer quadratischer
  formen von negativer determinante.
\newblock {\em Mathematische Annalen}, 25(2):157--196, 1885.

\bibitem{Kro60}
L.~Kronecker.
\newblock Ueber die anzahl der verschiedenen classen quadratischer formen von
  negativer determinante.
\newblock {\em Journal f{\"u}r die reine und angewandte Mathematik},
  1860(57):248--255, 1860.

\bibitem{lovasz2009very}
L.~Lov{\'a}sz.
\newblock Very large graphs.
\newblock {\em arXiv preprint arXiv:0902.0132}, 2009.

\bibitem{Piz90}
A.~K. Pizer.
\newblock Ramanujan graphs and hecke operators.
\newblock {\em Bulletin of the American Mathematical Society}, 23(1):127--137,
  1990.

\bibitem{Sil09}
J.~H. Silverman.
\newblock {\em The arithmetic of elliptic curves}, volume 106.
\newblock Springer Science and Business Media, 2009.

\bibitem{Voi21}
J.~Voight.
\newblock {\em Quaternion algebras}.
\newblock Springer Nature, 2021.

\end{thebibliography}

$ $

\textit{E-mail address:} \url{wissam.ghantous@maths.ox.ac.uk}
\end{document}